\theoremstyle{plain}
\newtheorem{theorem}{Theorem}[section]
\newtheorem{corollary}[theorem]{Corollary}
\newtheorem{lemma}[theorem]{Lemma}
\newtheorem{problem}[theorem]{Problem}
\theoremstyle{definition}
\newtheorem{definition}[theorem]{Definition}
\newtheorem{assumption}[theorem]{Assumption}
\newtheorem{remark}[theorem]{Remark}
\newcommand{\upidx}[1]{{\at{#1}}}
\newcommand{\xspAux}{{X_{\aux}}}
\newcommand{\sspAux}{{Y_{\aux}}}
\newcommand{\fspAux}{{\mathcal{Y}_{\aux}}}
\newcommand{\sspApp}{{Y_{\app}}}
\newcommand{\fspApp}{{\mathcal{Y}_{\app}}}
\newcommand{\sspttApp}{{\widetilde{Y}_{\app}}}
\newcommand{\fspttApp}{{\widetilde{\mathcal{Y}}_{\app}}}
\newcommand{\itopApp}{{\mathcal{I}_{\app}}}
\newcommand{\xspInf}{{X}}
\newcommand{\sspInf}{{Y}}
\newcommand{\fspInf}{{\mathcal{Y}}}
\newcommand{\abs}[1]{{\left|#1\right|}}
\newcommand{\bigabs}[1]{{\big|}#1{\big|}}
\newcommand{\babs}[1]{\abs{#1}_{\beta}}
\newcommand{\bnorm}[1]{\norm{#1}_{\beta}}
\newcommand{\lnorm}[1]{\norm{#1}_{\mathrm{mom}}}
\newcommand{\bpnorm}[2]{{\left\lfloor#1\right\rfloor}_{#2,\,\beta}}
\newcommand{\bigpar}{$\;$\par\quad\par\noindent}
\newcommand{\dint}{\mathrm{d}}
\newcommand{\deq}{:=}
\newcommand{\triple}[3]{{\left({#1},{#2},{#3}\right)}}
\newcommand{\quadruple}[4]{{\left({#1},{#2},{#3},{#4}\right)}}
\newcommand{\pair}[2]{{\left({#1},{#2}\right)}}
\newcommand{\npair}[2]{{({#1},{#2})}}
\newcommand{\at}[1]{{\left({#1}\right)}}
\newcommand{\bat}[1]{{\big({#1}\big)}}
\newcommand{\Bat}[1]{{\Big({#1}\Big)}}
\newcommand{\nat}[1]{{({#1})}}
\newcommand{\eps}{\varepsilon}
\newcommand{\norm}[1]{{||}#1{||}}
\newcommand{\ol}[1]{\overline{#1}}
\newcommand{\aux}{\mathrm{aux}}
\newcommand{\app}{\mathrm{app}}
\newcommand{\Rset}{{\mathbb{R}}}
\newcommand{\Nset}{{\mathbb{N}}}
\newcommand{\cinterval}[2]{[#1,\,#2]}%
\newcommand{\cointerval}[2]{[#1,\,#2)}%
\newcommand{\ccinterval}[2]{[#1,\,#2]}%
\newcommand{\calT}{{\mathcal{T}}}
\newcommand{\calN}{{\mathcal{N}}}
\newcommand{\calS}{{\mathcal{S}}}
\newcommand{\calY}{{\mathcal{Y}}}
\newcommand{\calE}{{\mathcal{E}}}
\begin{document}
\title{A Kinetic Model for Grain Growth}

\author{R.~Henseler \and M.~Herrmann \and  \and J.~J.~L.~Vel{\'a}zquez}

\author{ %
    R.~Henseler\thanks{%
        Universit{\"a}t Bonn,\,Germany,\,%
        {\tt henseler@uni-bonn.de }}
    \and
    M.~Herrmann\thanks{%
        University of Oxford,\,England,\,%
        {\tt michael.herrmann@maths.ox.ac.uk}}
    \and
    B.~Niethammer\thanks{%
        University of Oxford,\,England,\,%
        {\tt niethammer@maths.ox.ac.uk}}
    \and
    J.~J.~L.~Vel{\'a}zquez\thanks{%
        Instituto de Ciencias Mathem\'{a}ticas (CSIC-UAM-UC3M-UCM),\,Spain,\,%
        {\tt jj{\_}velazquez@mat.ucm.es}}
   }%
\date{\today}%
\maketitle%
%
%
%
%
\begin{abstract}
We provide a well--posedness analysis of a kinetic model for grain
growth introduced by Fradkov which is based on the {\it von
Neumann--Mullins law}. The model consists of an infinite number of
transport equations with a tri-diagonal coupling modelling
topological changes in the grain configuration. Self--consistency of
this kinetic model is achieved by introducing a coupling weight
which leads to a nonlinear and nonlocal system of equations.
\par
We prove existence of solutions by approximation with 
finite dimensional systems. Key ingredients in passing to the limit
are suitable super--solutions, a bound from below on the total mass,
and a tightness estimate which ensures that no mass is transported
to infinity in finite time.
\end{abstract}
\quad\newline%
\begin{minipage}{\textwidth}
\emph{Keywords}: grain growth, kinetic model, infinite--dimensional system
\end{minipage}
\quad\newline%
\begin{minipage}{\textwidth}
\emph{AMS Subject Classification}: 35F25, 35R15, 74A50
\end{minipage}
%

%
%
%
%
%
%
%
%
%
%
\section{Introduction}
%
%
Many technologically useful materials are poly-crystalline
aggregates, composed of a huge number of crystallites, called
grains, separated by so--called grain boundaries. Typically such
materials undergo an aging process leading to
coarsening of the grain structure and therefore inducing changes
in mechanical, electrical, optical, and magnetic properties of the
material. For details we refer to the review articles by Fradkov and
Udler \cite{FU94} and by Thompson \cite{T01}.
\par
Different approaches for modelling grain growth in two space
dimensions are established in the literature. In Monte--Carlo
models, compare \cite{ASGS84a,ASGS84b}, the kinetics of the boundary
motion are simulated by employing a Monte--Carlo technique for
moving discrete lattice points describing the microstructure. An
attractive feature of this model is the simple handling of
topological events like grain boundary flipping and grain
disappearance.
\par
Boundary tracking models based on partial differential equations as
discussed in \cite{KL01,MNT04} offer an alternative to Monte--Carlo
models as they deal with quantities of lower dimension. They can be
further reduced to so--called vertex models where movement of grain
boundaries is projected onto the triple--junctions, see
\cite{KNN89,HNO03}. In both cases, however, topological changes
require extra treatment.
\par %
In the sequel we
focus on a kinetic mean-field type models \cite{Fr88a,M87,F93} based on
the {\it von Neumann--Mullins law}. Such models consider
time--dependent distribution functions for the grain areas and the
number of sides per grain. Grain areas change according to the {\it
von Neumann--Mullins law}, topological changes are modelled by
collision-type operators. Fradkov\cite{Fr88a} was the first to
develop a model of this type which consists of an infinite--dimensional
system of transport equations with a nonlocal weight, making the
equations nonlinear. In this article we establish a rigorous
well--posedness theory for this model.

%
%
\section{The model}\label{sec:model}
%
%
In this section we present a derivation of Fradkov's kinetic model
for grain growth which is based on the {\it von Neumann--Mullins law} for the
change of grain areas and the so--called {\it `gas'
approximation} to describe topological changes in a 2D network of grains.
Starting point is the isotropic mean--curvature flow for the grain boundaries
with equilibrium of forces at triple junctions.
%
%
\subsection{Networks of grains with triple junctions}
\paragraph*{Motion by mean curvature and equilibrium of forces at triple
junctions}
%
Mean curvature flow coupled with equilibrium of forces at triple
junctions is a widely accepted model for two--dimensional grain
growth \cite{BR93,KL01,MNT04}. For simplicity our objects are
1--periodic spatial networks whose curves meet in triple junctions
(Fig. \ref{fig:isocrystals}). We restrict ourselves in the following to the case of isotropic
surface energies, such that the curves move according to the
isotropic mean curvature flow. Moreover, we assume that the mobility
of the triple junctions is infinite compared to the mobility of the
grain boundaries, and this leads to equilibrium of forces at triple
junctions. In the isotropic case this condition, also known as {\it
Herring condition}, just means that the curves meet in an angle of
$2\pi/3$. The Herring condition also arises as the natural boundary
condition in the interpretation of the mean curvature flow of
networks as $L^2$--gradient flow of the surface energy, see
\cite{TC94,HNO03}.
\begin{figure}[h!]
\centering%
{\includegraphics[width=0.25\textwidth]{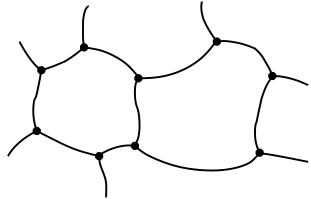}}%
\caption{Cartoon of a 2D network of grains with triple junctions
indicated by bullets. The Herring condition implies all angles
to equal $2\pi/3$.} \label{fig:isocrystals}
\end{figure}
%
%
\paragraph*{Von Neumann--Mullins law}
%
Under the assumptions stated above (isotropic surface energy, equal
mobility of grain boundaries, and infinite mobility of triple
junctions), one can  derive a law of motion for the area
$a\left(t\right)$ of a single grain with $n$ edges \cite{M56}, known
as the {\it von Neumann--Mullins law}:
\begin{align}
\label{vNMl} \frac{\dint{}}{\dint{t}} a\left(t\right) = M \sigma
\frac{\pi}{3} \left(n-6\right)
\end{align}
Here $M$ denotes the mobility of the grain boundaries and $\sigma$
the surface tension. The proof uses a direct geometric computation
involving motion by curvature of the grain boundaries and the
prescribed jumps of the outer normal by $2\pi/3$ at triple
junctions.\par The {\it von Neumann--Mullins law} implies that
grains with less than six edges shrink, those with more than six
grow, and such with exactly six edges retain their area (possibly
not their shape).
%
%
\paragraph*{Topological changes}
%
The evolution by mean curvature is well--defined until two vertices
on a grain boundary collide, after which topological rearrangements
may take place. This happens when either an edge or a whole grain
vanishes. In the first case an unstable fourfold vertex is produced,
which immediately splits up again, usually in such a way that two
new vertices are connected by a new edge.
\begin{figure}[ht]
\centering%
{\includegraphics[width=0.4\textwidth]{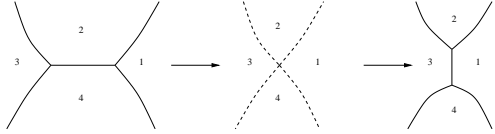}}%
\caption{Neighbour switching} \label{fig:switching}
\end{figure}
In this case, two neighbouring grains decrease their topological
class (i.e. the number of edges), whereas the two other grains
increase it (Fig. \ref{fig:switching}). The second case causing
topological rearrangements is grain vanishing. Each grain vanishing
is accompanied by disappearance of two vertices and three edges. Due
to the {\it von Neumann--Mullins law} we only take grains with
topological class $2 \le n \le 5$ into account.
\begin{figure}[ht]
\centering%
{%
\begin{minipage}[c]{0.4\textwidth}%
\includegraphics[width=\textwidth]{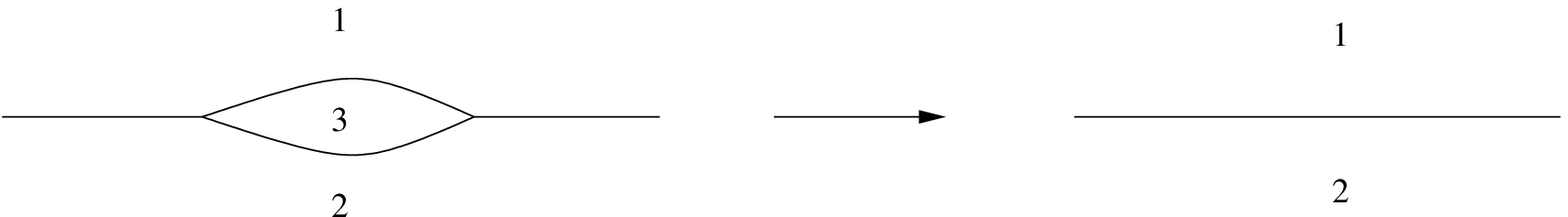}%
\end{minipage}%
\hspace{0.1\textwidth}%
\begin{minipage}[c]{0.4\textwidth}%
\includegraphics[width=\textwidth]{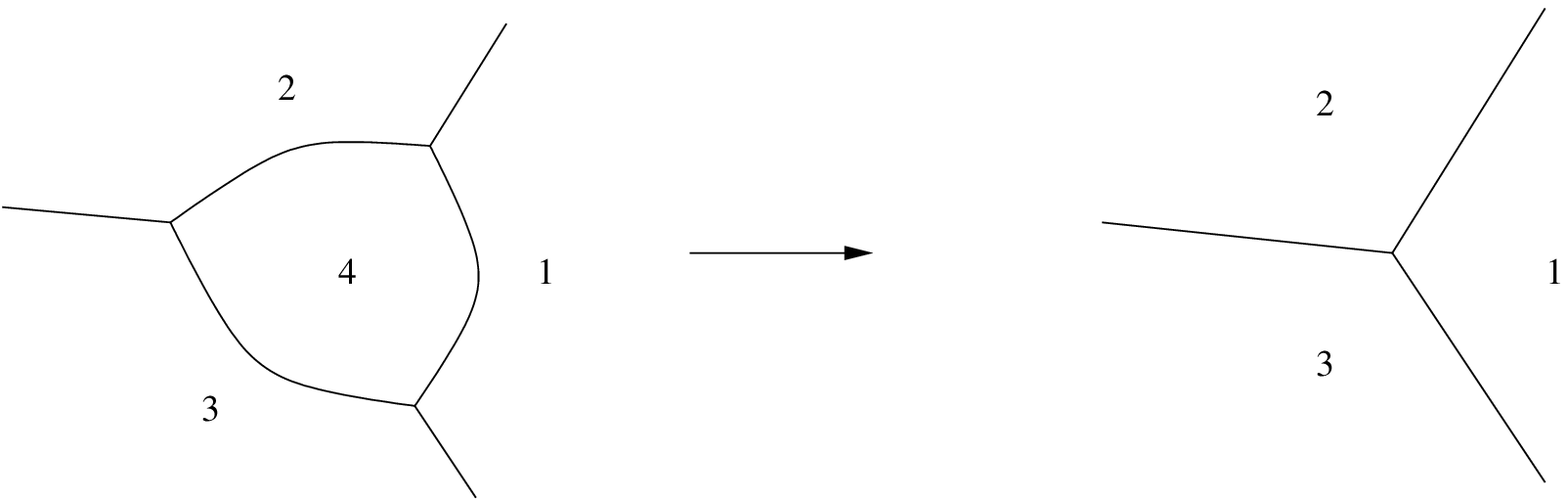}%
\end{minipage}%
\\%
\begin{minipage}[c]{0.4\textwidth}%
\includegraphics[width=\textwidth]{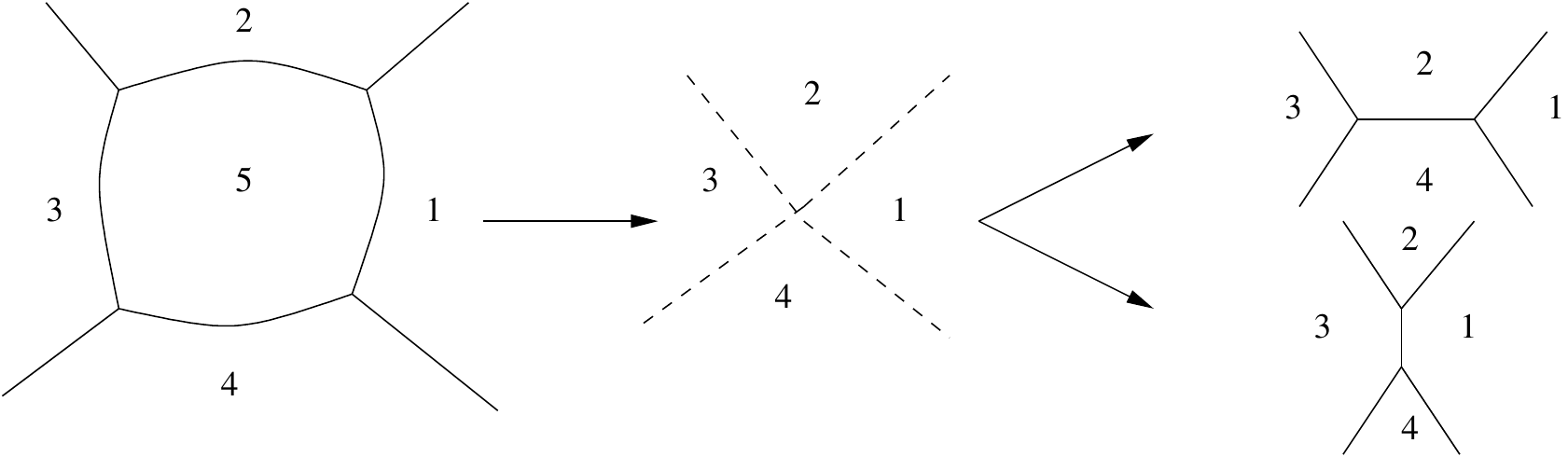}%
\end{minipage}%
\hspace{0.1\textwidth}%
\begin{minipage}[c]{0.4\textwidth}%
\includegraphics[width=\textwidth]{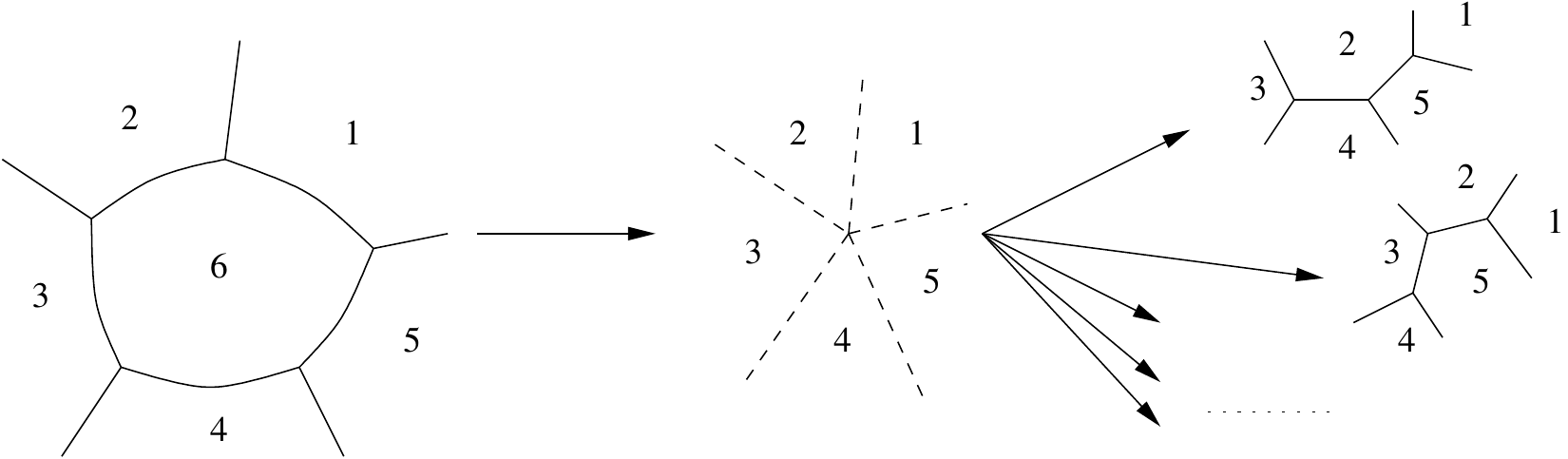}%
\end{minipage}%
}%
\caption{Grain vanishing} \label{fig:vanishing}
\end{figure}
Grains with $n=2$ and $n=3$ vanish in a single possible way. For
$n=4$ we observe two topologically distinguishable possibilities and
for $n=5$ even five possible local configurations (Fig.
\ref{fig:vanishing}). For further details on the resulting
topological classes we refer to the review article by Fradkov and
Udler \cite{FU94}.\par It is unclear by which mechanism a specific
topological configuration is selected within switching or after
vanishing events. A natural idea is to compute all possible local
configurations and select the one that minimises energy locally in
the best way, see \cite{HNO03}.
%
%
\subsection{Kinetic model for networks of grains}
%
Our next aim is to derive the kinetic model for large networks of grains with triple junctions. This
kinetic model comprises the same essential features as the gradient flow dynamics, but differs 
in some aspects. 
%
\paragraph*{One--particle distribution}
%
Following \cite{Fr88a,FU94} we introduce a
number density ${f_{n}\left(a,t\right)}$ that measures the number of
grains with topological class $n\geq2$ and area $a\geq0$ at time
$t\geq0$. Using the {\it von Neumann--Mullins law} \eqref{vNMl} we
can describe the evolution of $f$ by transport equations
\begin{align}
\label{transportsystem}
\partial_{t} {f_{n}\left(a,t\right)} + \left(n-6\right) \partial_{a} {f_{n}\left(a,t\right)} &= 0
\end{align}
as long as no topological rearrangements take place. Furthermore we
choose the following boundary conditions
\begin{align*}
{f_{n}\left(0,t\right)} = 0 \quad\quad\text{for}\quad  n > 6,
\end{align*}
ensuring that no additional mass is transported from the negative
half--axis to the positive one. This means no additional grains can
be created.
%
%
\paragraph*{`Collision' operator}
%
%
To model topological changes we introduce a collision term
$(\tilde{J}f)_{n}$ on the r.h.s. of \eqref{transportsystem} which couples
the equations for different topological classes. We define {\it
topological fluxes} $\eta_{n}^{+}$ and $\eta_{n}^{-}$ denoting the
flux from class $n$ to $n+1$ and from $n$ to $n-1$, respectively, so
that
\begin{equation*}
\nat{\tilde{J}f}_{n} = \eta_{n-1}^{+} + \eta_{n+1}^{-} -
\eta_{n}^{+} - \eta_{n}^{-}\,.
\end{equation*}
Next we state the so--called {\it `gas' approximation} of
the collision term by Fradkov \cite{Fr88a} that takes into account the following transitions between 
topological classes.

\begin{itemize}
\item
Switching events 
cause simultaneously both a transition from $n$ to $\left(n-1\right)$ 
(grains that contained the vanishing edge) and a transition from $n$ to $\left(n+1\right)$ (grains that contain the created edge).
\item
Vanishing of a neighbouring grain corresponds to a transition from
$n$ to $\left(n-1\right)$.
\end{itemize}
The restriction to these elementary events for topology changes is 
a simplification of the real dynamics in networks of grains. 
In particular, we ignore that
the topological class of a grain is lowered by
two if the neighbouring annihilated grain was a lens with
topological class $n=2$. Fradkov and Udler argue \cite{FU94} that
such an event takes place only very rarely as the number of lenses
itself is already very small. In addition, we ignore that the annihilation of a grain with topology class $n=5$ causes
other grains to increase their topology class, compare Figure \ref{fig:vanishing}.
%
%
%
%
\paragraph*{Transition rates}
%
We make a further simplification by assuming constitutive relations for the transitions 
rates, that in particular ignore all neighbour correlations. More precisely, in what follows we assume that the topological fluxes are given by
\begin{equation}
\label{intro.Fluxes}
\eta_{n}^{+} = \Gamma\beta\, n f_{n}\,,\quad \eta_{n}^{-} =
\Gamma\left(\beta+1\right) n f_{n},
\end{equation}
where the coupling weight $\Gamma$ 
describes the
intensity of topological changes and depends in a
self--consistent way on the complete state of the system, see \eqref{intro.DefGa} below. The parameter $\beta$ in \eqref{intro.Fluxes} measures the ratio between switching events and vanishing events.
In principle one could allow for arbitrary values of $\beta$
but our analysis requires $0<\beta<2$. Moreover, \cite{Fr88b} provide numerical evidence that
experimental data correspond to $0.45\lesssim\frac{\beta}{1+\beta}\lesssim0.6$, that means
$0.8\lesssim\beta\lesssim1.5$.
\par%
According to \eqref{intro.Fluxes} the collision terms are given by
$\tilde{J}f=\Gamma\at{f}Jf$
with 
\begin{align}
\label{intro.DefJ}
\begin{split}
\at{Jf}_{2} &= 3\left(\beta+1\right)f_{3}-2\beta f_{2},
\\%
\at{Jf}_{n} &= \left(\beta+1\right)\left(n+1\right)f_{n+1}-
\left(2\beta+1\right)n\,f_{n}+\beta\left(n-1\right)f_{n-1}
\quad\text{for}\quad{}n>2.
\end{split}
\end{align}
Notice that this definition ensures the {\it zero balance property},
that is
\begin{align}
\label{zerobalance} \sum_{n\ge2}\left(Jf\right)_{n}\pair{a}{t} = 0\qquad\text{for all}\;a,\,t\geq0.
\end{align}
This identity reflects that for each $a_0>0$ the number of grains with area $a_0$ does not change
due to neither switching nor vanishing events.
%
\paragraph*{Evolution equation}

The coupling weight $\Gamma$ which makes the equations nonlinear
(and nonlocal in the grain area variable $a$) is chosen as
\begin{align*}
\Gamma\at{f}= \Gamma_N\at{f}/\Gamma_D\at{f}
\end{align*}
with
\begin{align}
\label{intro.DefGa}  %
\Gamma_N\at{f\at{t}}=\sum_{n\ge2} \left(n-6\right)^2
{f_{n}\left(0,t\right)},
\quad%
\Gamma_D\at{f\at{t}}={\sum_{n\ge2}n\int\limits_{0}^{\infty}{f_{n}
\left(a,t\right)}\,\dint{a} -
2\left(\beta+1\right)\int\limits_{0}^{\infty}{f_{2}\left(a,t\right)}
\,\dint{a}}.
\end{align}
We will see in \S\ref{sec:Model.Props} below, that this choice of
$\Gamma$ guarantees consistency as the solution satisfies the
polyhedral formula, compare \eqref{intro.PolyhedralFormula}, and
conserves the total area covered by the grains under the evolution.
\bigpar
The kinetic model we consider in this paper is thus given by
\begin{align}
\label{infinitesystem}
\partial_{t} {f_{n}\left(a,t\right)} +
 \left(n-6\right) \partial_{a} {f_{n}\left(a,t\right)} &=
\Gamma\left(f\left(t\right)\right) {\left(J
f\right)_{n}\left(a,t\right)}.
\end{align}
These equations
\eqref{infinitesystem} are basically the same as in the work of
Fradkov \cite{Fr88a,Fr88b,FU94}. The coupling term
$\left(Jf\right)_{2}$ differs and we do not neglect $\int\limits
f_{2}\,\dint{a}$ within $\Gamma_D\left(f\right)$. We refer moreover 
to \cite{BKLT06}, which presents some formal analysis and numerical simulations for
a similar kinetic model. This model relies on different
expressions for $\eta^\pm_n$ but comprises the same essential features as \eqref{infinitesystem}.
\bigpar
Finally, we mention the following implication of Definition \eqref{intro.DefGa}.
As long as no vanishing events take place, that means as long as no small grains with topology 
class $2\leq{n}\leq5$ do exist, switching events do likewise not occur, and \eqref{infinitesystem} reduces to a system of uncoupled transport equations. This is in contrast to the gradient flow dynamics in which 
edge switching events occur independently of grain vanishing.
%
\subsection{Qualitative properties of the kinetic model}\label{sec:Model.Props}
%
%
We summarise the most important properties of the kinetic model. In
order to simplify the presentation we argue by means of formal
analysis but mention that all results will be proven rigorously
within \S\ref{sec:Proofs}.
%
%
\paragraph*{Decreasing number of grains}
%
Since equations \eqref{infinitesystem} reflect a coarsening process,
it is clear that the total number of grains $N\at{f\at{t}}$
with
\begin{align}
\label{intro.DefN}
 N\at{f} =
\sum_{n\ge2}\int\limits_{0}^{\infty}{f_{n}\at{a}}\,\dint{a}
\end{align}
should decrease in time. This is also satisfied for solutions to our
model as grains with topology class $n<6$ can shrink to area zero so that they are annihilated. 
More precisely, the evolution equations imply 
\begin{equation*}
\notag
\frac{\dint{}}{\dint{t}} N\at{f\at{t}} =
\sum_{n\ge2}\left(n-6\right){f_{n}\left(0,t\right)}
=\sum_{n=2}^5\left(n-6\right){f_{n}\left(0,t\right)}
\leq 0\,.
\end{equation*}
%
%
\paragraph*{Polyhedral formula}
%
In this section we motivate the choice of $\Gamma$ in
\eqref{intro.DefGa}.
It is essential that our kinetic model reflects all the properties
which are satisfied by a grain configuration which covers the
complete area and where edges only meet in triple junctions. Hence
we need to ensure that  Euler's polyhedral formula is satisfied.
For a finite network of grains Poincar{\'e}'s version of the polyhedral formula reads
\begin{equation*}
V + F - E = \chi\at{g}.
\end{equation*}
with $V$, $E$, and $F$ being the number of vertices, edges, and
facets, respectively. Moreover, $g$ is the genus of the surface and
$ \chi\left(g\right) = 2 - 2g $
the corresponding Euler characteristic. We can encode that grain
boundaries only meet in triple junctions by setting $V=2/3 E$, and
the polyhedral formula reduces to
\begin{equation*}
3F - E = 3\chi\at{g}.
\end{equation*}
In what follows we set $\chi\at{g}=0$ because 
the network of grains is usually considered on a two-dimensional torus. 
In the kinetic model the \emph{normalised} numbers of facets and edges are
given by
\begin{align*}
F\at{f} =N\at{f}=
\sum_{n\ge2}\int\limits_{0}^{\infty}{f_{n}\at{a}}\,\dint{a},\quad
E\at{f} = \frac{1}{2}\sum_{n\ge2}
n\int\limits_{0}^{\infty}{f_{n}\at{a}}\,\dint{a},
\end{align*}
respectively, and consequently we require
each solution of \eqref{infinitesystem} to satisfy
\begin{align}
\label{intro.PolyhedralFormula}
P\at{f\at{t}}=0
\end{align}
for all times $t\geq0$, where the \emph{polyhedral defect} is
given by
\begin{align}
\label{intro.DefP}
 P\at{f} =
\sum_{n\ge2}\at{n-6}\int\limits_{0}^{\infty}{f_{n}\at{a}}\,\dint{a}.
\end{align}
The main observation is that our choice of $\Gamma$ guarantees \eqref{intro.PolyhedralFormula}. Indeed, due to \eqref{zerobalance} we find
\begin{equation*}
\begin{split}
\frac{\dint{}}{\dint{t}}P\at{f\at{t}}=
&-\sum_{n\ge2}\left(n-6\right)^2\int\limits_{0}^{\infty}\partial_{a}{f_{n}
\left(a,t\right)}\,\dint{a} +
\sum_{n\ge2}\left(n-6\right)\int\limits_{0}^{\infty}\Gamma\left(f
\left(t\right)\right){\left(J f\right)_{n}\left(a,t\right)}\,\dint{a}\\
= &\sum_{n\ge2}\left(n-6\right)^2 f_n\left(0,t\right) +
\Gamma\left(f\left(t\right)\right)\sum_{n\ge2}
n\int\limits_{0}^{\infty}{\left(J f\right)_{n}
\left(a,t\right)}\,\dint{a},
\end{split}
\end{equation*}
and a simple
calculation shows
\begin{equation*}
\begin{split}
\sum_{n\ge2}n\left(Jf\right)_n &=
\beta\sum_{n\ge3}n\left(n-1\right)f_{n-1} +
\left(\beta+1\right)\sum_{n\ge2}n\left(n+1\right)f_{n+1} -
\beta\sum_{n\ge2}n^2f_{n} - \left(\beta+1\right)\sum_{n\ge3}n^2f_{n}\\
&= 2\left(\beta+1\right)f_{2}-\sum_{n\ge2}nf_{n} ,
\end{split}
\end{equation*}
which implies
\begin{equation*}
\frac{\dint{}}{\dint{t}}P\at{f\at{t}}=\Gamma_N\at{f}
-\Gamma\at{f}\Gamma_D\at{f}=0
\end{equation*}
thanks to \eqref{intro.DefGa}. Hence, the polyhedral formula
\eqref{intro.PolyhedralFormula} is satisfied for all $t>0$ if it is
satisfied by the initial data.
%
\paragraph*{Conservation of area}
%
As a consequence of the polyhedral formula we obtain that the total
covered area $A\at{f\at{t}}$ with
\begin{align}
\label{intro.DefA}
 A\at{f} =
\sum_{n\ge2}\int\limits_{0}^{\infty}a{f_{n}\at{a}}\,\dint{a}
\end{align}
is a conserved quantity. This follows from
\begin{equation*}
\begin{split}
\frac{\dint{}}{\dint{t}}A\at{f\at{t}} &=
-\sum_{n\ge2}\left(n-6\right)\int\limits_{0}^{\infty}
a\partial_{a}{f_{n}\left(a,t\right)}\,\dint{a}
 + \Gamma\left(f\left(t\right)\right)\int\limits_{0}^{\infty} a
\sum_{n\ge2}{\left(J f\right)_{n}\left(a,t\right)}\,\dint{a}\\
&=
\sum_{n\ge2}\left(n-6\right)
\int\limits_{0}^{\infty}{f_{n}\left(a,t\right)}\,\dint{a}=P\at{f\at{t}}=0,
\end{split}
\end{equation*}
where we used an integration by parts and the zero balance property
\eqref{zerobalance}.
%
\subsection{Statement of the main result}
%
%
Our main result in this paper concerns the existence of mild solutions to the following initial and boundary value problem, provided that the initial data satisfy certain assumptions.
\begin{problem}
\label{TheInfiniteProblem}
For fixed $0<T<\infty$, and given initial data $g=g_n\at{a}$ we seek
mild solutions $f=f_n\pair{a}{t}$ to the following infinite system of coupled transport equations
\begin{align*}
\partial_{t} {f_{n}\left(a,t\right)} + \left(n-6\right)
\partial_{a} {f_{n}\left(a,t\right)} &= \Gamma\left(f\left(t\right)\right)
{\left(J f\right)_{n}\left(a,t\right)}
\end{align*}
with initial and boundary conditions
\begin{align*}
\begin{array}{lclclc}
f_n\pair{a}{0} &=& g_n\at{a}&\quad&\text{for}& n\geq2,
\\%
f_n\pair{0}{t}&=&0&\quad&\text{for}& n>6,
\end{array}
\end{align*}
where $a\geq0$, $t\in\ccinterval{0}{T}$, $n\geq2$.
\end{problem}
A first assumption we have to make concerns the choice of $\beta$.
In what follows we always suppose that $\beta$ is a fixed constant
with $0<\beta<2$, where the upper bound is necessary in order to
ensure that $\Gamma_D\at{f}$ is non-negative for all $f\geq0$.
Further necessary assumptions regard the initial data.
\begin{assumption}
\label{TheMainAssumption}%
Suppose that
\begin{enumerate}
\item $g$ is non--negative with $g_n\at{0}=0$ for all $n>6$,
\item $\sum\limits_{n\geq2}\at{1+n}\norm{g_n}_{0,\,1}<\infty$ with $\norm{g_n}_{0,\,1}=\int\limits_0^\infty\at{1+a}g_n\at{a}\dint{a}$,
\item $g$ fulfils the polyhedral formula $P\at{g}=0$.
\end{enumerate}
Moreover, suppose that $g$ is sufficiently regular and decays sufficiently fast in $n$- and $a$-direction,
\end{assumption}
According to the discussion in \S\ref{sec:model}, the first three assumptions on the initial data appear very naturally. Our regularity and decay assumptions, however, are needed for technical reasons and can probably be weakened at the price of more analytical effort.  The precise statement of these assumptions appears below, but we mention that we mainly assume all functions $g_n$ to be equi-continuous and to decay
exponentially with respect to $a$ and $n$.
\par%
Our main result can be summarised as follows.
\begin{theorem}
\label{TheMainTheorem}
For any initial data that satisfy Assumption \ref{TheMainAssumption}
there exists a unique mild solution $f$ to Problem
\eqref{TheInfiniteProblem} for all $0\leq{t}<\infty$. Moreover, this
solution conserves the area with non--increasing number of grains,
and all states $f\at{t}$ satisfy Assumption \ref{TheMainAssumption}.
\end{theorem}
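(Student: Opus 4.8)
The plan is to construct the solution as a limit of solutions to finite-dimensional truncations, and to identify the conserved and monotone quantities of \S\ref{sec:Model.Props} as the mechanism that keeps the nonlocal weight $\Gamma$ under control. Concretely, I would first fix a truncation level $N$ and replace Problem \eqref{TheInfiniteProblem} by the system for the finitely many classes $2\le n\le N$, modifying the tridiagonal operator $J$ at the top class so that no flux leaves class $N$ (preserving the zero balance property \eqref{zerobalance} and non-negativity). Each truncated system has finitely many transport equations with characteristics $a\mapsto a+\at{n-6}t$, so a \emph{mild} solution is the fixed point of the Duhamel map that integrates $\Gamma\at{f\at{s}}\at{Jf}_n$ along characteristics, using the boundary value $f_n\pair{0}{t}=0$ for $n>6$ on the inflow boundary and no condition for $n<6$ on the outflow boundary. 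Since $J$ is linear and bounded on the truncated state and $\Gamma$ is smooth wherever its denominator $\Gamma_D$ stays positive, a Banach fixed-point argument in a weighted space (with exponential weights in $a$ and $n$ matching Assumption \ref{TheMainAssumption}) yields a unique local-in-time truncated solution, continued as long as $\Gamma_D$ remains bounded below.

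The heart of the argument is a set of a priori estimates uniform in $N$. First, the Metzler structure of $J$ (non-negative off-diagonal gain terms $\at{\beta+1}\at{n+1}f_{n+1}$ and $\beta\at{n-1}f_{n-1}$) together with $\Gamma\ge0$ propagates non-negativity. Next, the formal identities of \S\ref{sec:Model.Props} become rigorous bounds: the polyhedral defect \eqref{intro.DefP} stays zero, the area \eqref{intro.DefA} is conserved, and the number of grains \eqref{intro.DefN} satisfies $\frac{\dint{}}{\dint{t}}N\at{f\at{t}}=\sum_{n=2}^{5}\at{n-6}f_n\pair{0}{t}\le0$. The crucial consequence is a lower bound on the denominator: using $P\at{f}=0$ one has $\sum_{n}n\int f_n=6\,N\at{f}$, whence $\Gamma_D\at{f}=6\,N\at{f}-2\at{\beta+1}\int f_2\ge 2\at{2-\beta}N\at{f}$, which is strictly positive precisely because $0<\beta<2$ and because a separate lower bound keeps $N\at{f\at{t}}\ge c>0$ on $\cinterval{0}{T}$ (the loss rate of $N$ is controlled by the boundary traces). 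This is exactly the announced bound from below on the total mass, and it keeps $\Gamma$ bounded. To bound $\Gamma$ from above and to control the tails I would construct explicit super-solutions of product form, exponentially decaying in both $a$ and $n$: replacing $\Gamma$ by an a priori upper bound $\bar\Gamma$, one checks that a suitable $\bar f_n\pair{a}{t}$ satisfies the differential inequality for \eqref{infinitesystem} and dominates $f$ by comparison. Because the transport speed $n-6$ grows with $n$, mass in class $n$ travels to $a\sim nt$; the super-solution must therefore trade geometric decay in $n$ against spreading in $a$, and it is this balance that yields the \emph{tightness} estimate $\sup_{t\in\cinterval{0}{T}}\sum_{n}\int_R^\infty f_n\pair{a}{t}\,\dint{a}\to0$ as $R\to\infty$, uniformly in $N$, guaranteeing that no mass escapes to infinity in finite time.

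With these uniform bounds in hand, I would pass to the limit $N\to\infty$. Weak-$*$ compactness in the weighted mass space extracts a limit $f$, and the tightness estimate upgrades this so that the moments $N\at{f}$, $A\at{f}$ and the sums defining $\Gamma_N$, $\Gamma_D$ in \eqref{intro.DefGa} pass to the limit; the uniform lower bound on $\Gamma_D$ then gives convergence of the quotient $\Gamma=\Gamma_N/\Gamma_D$. Equi-continuity of the traces (from the regularity part of Assumption \ref{TheMainAssumption}, propagated by the super-solutions) lets the boundary values $f_n\pair{0}{t}$ entering $\Gamma_N$ converge, so the limit satisfies the mild formulation of Problem \eqref{TheInfiniteProblem}; that it again satisfies Assumption \ref{TheMainAssumption} and conserves area with non-increasing grain number follows by lower semicontinuity together with the super-solution bounds. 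Uniqueness I would obtain directly on the infinite system: for two mild solutions with the same data, the linearity of $J$ and the local Lipschitz dependence of $\Gamma$ on the state (guaranteed by $\Gamma_D\ge 2\at{2-\beta}c>0$) turn the difference of the two Duhamel formulas into a closed integral inequality in the weighted norm, which Gronwall's lemma forces to vanish.

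The main obstacle I expect is the handling of the boundary traces $f_n\pair{0}{t}$, which enter $\Gamma_N$ nonlocally and simultaneously drive the decay of $N\at{f}$: I need them regular enough that $\Gamma_N$ is well defined and continuous under $N\to\infty$, yet I must also bound them in a way that prevents $N\at{f\at{t}}$—and hence $\Gamma_D$—from degenerating on $\cinterval{0}{T}$. Coupled to this is the tightness estimate, whose difficulty is the unbounded transport speed $n-6$: controlling the joint $\pair{a}{n}$ tail uniformly in the truncation, so that the growing drift does not defeat the geometric decay in $n$, is the genuinely technical core on which both the compactness and the consistency of $\Gamma$ ultimately rest.
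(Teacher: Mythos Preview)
Your overall strategy matches the paper's almost exactly: truncate to $2\le n\le n_0$, get local existence by Banach's fixed point on the Duhamel map, propagate non-negativity and the polyhedral formula, bound $\Gamma_D$ from below via $\Gamma_D\ge(4-2\beta)N$, derive tightness estimates uniform in $n_0$, pass to the limit, and prove uniqueness separately. A few tactical points differ. First, the paper's super-solution is simpler than yours: it is the \emph{constant-in-$a$} state $\phi_n=\tfrac{1}{n\beta}\bigl(\tfrac{\beta}{1+\beta}\bigr)^n$ solving $J\phi=0$, which already gives the right geometric decay in $n$ and the norm $\bnorm{\cdot}$; decay in $a$ is handled separately by the tightness machinery (the ``quasi-complements'' $N^\bot$ and $M^\bot$), not by the super-solution itself. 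Second, and more importantly, your mechanism for the lower bound on $N(t)$ is not quite the paper's: you cannot get $N(t)\ge c$ by ``controlling the boundary traces'', because those traces are themselves what you are trying to bound. The paper instead combines area conservation with the tightness estimate: from $A(g)=A(f(t))\le \alpha\,N(t)+M^\bot(t,\alpha)$ and the uniform exponential bound on $M^\bot$ one solves for $\alpha$ and obtains $N(t)\ge C_t>0$ independently of $n_0$; this is the step that closes the loop and prevents $\Gamma$ from blowing up. Third, the paper passes to the limit via Arzel\`a--Ascoli (equi-continuity in $a$ is inherited from the shift-invariance of the auxiliary problem with frozen $\Gamma$), not weak-$*$ compactness, which makes the convergence of the boundary traces $f_n(0,t)$ entering $\Gamma_N$ immediate. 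Finally, uniqueness in the paper is obtained by an energy method with the specific distance $\calE(f,\tilde f)=\sum_n n\int_0^\infty e^{-a}(f_n-\tilde f_n)^2\,\dint a+(N(f)-N(\tilde f))^2$ rather than by a direct Gronwall argument in the weighted sup-norm; your route may also work, but the $e^{-a}$ weight and the extra $(N-\tilde N)^2$ term are what the paper needs to absorb the nonlocal dependence of $\Gamma$.
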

The details of the proof are presented within \S\ref{sec:Proofs} and
rely on the following key ideas. In \S\ref{sec:Proofs.1} we
introduce an approximate system of coupled transport equations by
neglecting all topological classes with $n>n_{0}$.
\par
In order to construct mild solutions for the approximate system we solve the
transport equations explicitly, and apply \emph{Duhamel's Principle}
(or \emph{Variation of Constants}) to the collision operator. This
is discussed in \S\ref{sec:Proofs.Transport}.
\par
Freezing the coupling weight $\Gamma$ in the approximate equations
we can derive a comparison principle that implies both, the
non-negativity of solutions and the existence of an appropriate
super--solution. This will be done in \S\ref{sec:AuxProb}.
\par
In \S\ref{sec:Proofs.LocalSol} we establish the short time existence
and uniqueness for admissible solutions to the approximate system.
To this end we propose a suitable iteration scheme, and make use of
Banach's Contraction Principle.
\par
The decay behaviour with respect to $a$ and $n$ is investigated in \S\ref{sec:Proofs.Tightness}. We derive several tightness estimates, and as a consequence we obtain long-time existence for the approximate system together with estimates that are uniform in $n_0$.
\par
In \S\ref{sec:Proofs.ApproxLimit} we show the existence of mild solutions to Problem \ref{TheInfiniteProblem} by passing to the limit $n_0\to\infty$. The Arzela--Ascoli theorem provides that solutions to the approximate problem
converge to some reasonable limit, and the tightness estimates ensure that this limit provides an admissible solution.
Finally, we sketch how both, the uniqueness of solutions and the continuous dependence
on the initial data, can be obtained.
%
%
\subsection{Further considerations}
%
%
Here we point out some open questions and directions for future
research.
%
%
\paragraph*{Stationary solutions}
%
Nontrivial stationary solutions to \eqref{infinitesystem}, i.e. $f^s
\not\equiv 0$, are characterised by
$ f_n^s\left(a\right) = 0 $
for $n \neq 6$ and $0<a<\infty$ (cf. \cite{Hen07}, \S6.1), but the
component function $f_6^s\left(a\right)$ can be anything. 
This is a further difference to the gradient flow dynamics 
as there not all networks of hexagons are stationary. However,
such nontrivial stationary solutions to the kinetic model are expected to be unstable for
the following reasons: Slightly perturbed data lead to a positive
$\Gamma\left(f\left(t\right)\right)$ for some times $t$ and are
therefore affected by the coupling operator ${\left(J
f\right)_{n}\left(a,t\right)}$. This leads to a decrease of the
total number of grains. We therefore expect stationary solutions to
be unstable, and thus we suspect that $f\rightarrow0$ weakly for
$t\rightarrow\infty$.
%
%
\paragraph*{Self--similar scaling}
%
For a coarsening process as considered here, one usually expects to
find self--similarity under dynamic scaling. The natural rescaling
${\varphi_{n}\left(\xi,t\right)}=t^{2}{f_{n}\left(a,t\right)}$,
$\xi=a/t$,
yields the following equation for self--similar solutions
\begin{align}
\label{rescaled:simple:stationary}
\left(n-6-\xi\right)\partial_{\xi}\varphi_{n}=
\Gamma\left(\varphi\right)\left(J\varphi\right)_{n}+2\varphi_{n}\,,\quad
n\ge2\,,\quad\xi\ge0\,.
\end{align}
The natural boundary conditions are
$\varphi_{n}\left(0\right)=0 $
for $n>6$, so that the solution depends on the values of
$\varphi_{2}\left(0\right),\dots,\varphi_{5}\left(0\right)$. Note
that the coupling weight still depends on the complete solution. A
starting point for future analysis is the following observation. We
can integrate \eqref{rescaled:simple:stationary} with respect to
$\xi$ to obtain
\begin{align}
\notag
\left(6-n\right)\varphi_{n}\left(0\right)=
\Gamma\left(\varphi\right)\left(J\phi\right)_{n}+\phi_{n}\,,\quad
n\ge2.
\end{align}
This is a two--point iteration scheme for 
$\phi_n=\int\limits\varphi_{n}\left(\xi\right)\dint{\xi}$.
%
%
\paragraph*{Lewis' law}
%
A natural question concerning grain growth is to ask whether there
are correlations between the topological class and the area of a
grain. Lewis \cite{L43} observed a linear relationship examining
cellular structures arising in biology, and Rivier and
Lissowski \cite{RL82} derived {\it Lewis' law} by maximum entropy
arguments applied to cell distributions. In common with
Flyvbjerg \cite{F93} this so--called {\it Lewis' law} reads
\begin{align}
\label{lewislaw} \left<\xi\right>_{n}=b\left(n-6\right)+c
\end{align}
in our model. Here
$\left<\xi\right>_{n}=\int\limits\xi\varphi_{n}\left(\xi\right)\dint{\xi}
/ \int\limits\varphi_{n}\left(\xi\right)\dint{\xi}$ denotes the mean grain
size in the topological class $n$. However, it is unclear if this
phenomenological law is really applicable for grain growth.
%
%
Formal computations suggest in our model that \eqref{lewislaw} is
valid asymptotically for large $n$ with $b=1/\left(\Gamma+1\right)$
and $c=b\left(\left(2\beta+1\right)-6\Gamma\right)$
(cf. \cite{Hen07}, \S6.4). Similar results are achieved by
Flyvbjerg \cite{F93}.

\section{Proof of the main result}
\label{sec:Proofs}
%
%
\subsection{The approximate system}
\label{sec:Proofs.1}
%
%
The approximate system for \eqref{infinitesystem} results from the
original equations by neglecting all number densities belonging to
topological classes with $n>{n_0}$. More precisely, we choose the parameter $n_0$
with $6<n_{0}<\infty$, and modify the coupling operator accordingly. 
The approximate coupling operator $J$ splits into its gain and loss part, that is
\begin{align}
\label{ApproxProb.CouplingOperator.Eqn1} J=J_+-J_-,
\end{align}
which now are given by
\begin{align}
\label{ApproxProb.CouplingOperator.Gain}
\begin{split}
\left(J_+f\right)_{2}&=3\left(\beta+1\right)f_{3},\\
\left(J_+f\right)_{n}&=\left(\beta+1\right)\left(n+1\right)f_{n+1}+
\beta\left(n-1\right)f_{n-1}\quad\text{for
$2<n<n_0$,}\\
\left(J_+f\right)_{n_{0}}&=\beta\left(n_{0}-1\right)f_{n_{0}-1},
\end{split}
\end{align}%
and
\begin{align}
\label{ApproxProb.CouplingOperator.Loss}
\begin{split}
\left(J_-f\right)_{2}&=2\beta{}f_{2},\\
\left(J_-f\right)_{n}&=\left(2\beta+1\right)nf_{n}\quad\text{for $2<n<n_0$,}\\
\left(J_-f\right)_{n_{0}}&=\left(\beta+1\right)n_{0}f_{n_{0}}.
\end{split}
\end{align}%
Notice that for $n<n_0$ the term $\at{Jf}_n$ is defined as in the
original model. For the sake of consistency we must moreover adapt
the formula for $\Gamma$. In what follows we use the approximate
coupling weight
\begin{align}
\label{ApproxProb.CouplingWeights}%
\Gamma\at{f}=\Gamma_N\at{f}/\Gamma_D\at{f},\quad
\Gamma_N\at{f}=\sum\limits_{n=2}^{5}\at{n-6}^2f_n\at{0},\quad
\Gamma_D\at{f}=-\sum\limits_{n=2}^{n_0}
\int\limits_0^\infty{}n\at{Jf}_n\at{a}\,\dint{a}.
\end{align}
Analogously to \S\ref{sec:model} we define the \emph{total
area} $A\at{f}$, the \emph{number of grains} $N\at{f}$, and the
\emph{polyhedral defect} $P\at{f}$ by
\begin{align*}
N\at{f}=\sum\limits_{n=2}^{n_0}\int\limits_{0}^{\infty}{f}_n\at{a}\,\dint{a},\quad
A\at{f}=\sum\limits_{n=2}^{n_0}\int\limits_{0}^{\infty}a{f}_n\at{a}\,\dint{a},\quad
P\at{f}=\sum\limits_{n=2}^{n_0}\at{n-6}\int\limits_{0}^{\infty}{f}_n\at{a}\,\dint{a},
\end{align*}
and for convenience we introduce in addition
\begin{align*}
M\at{f}=\sum\limits_{n=2}^{n_0}n\int\limits_{0}^{\infty}{f}_n\at{a}\,\dint{a},\quad
R\at{f}=2\at{\beta+1}\int\limits_{0}^{\infty}{f}_2\at{a}\,\dint{a}-n_0\beta\int
\limits_{0}^{\infty}{f}_{n_0}\at{a}\,\dint{a}.
\end{align*}
\begin{remark}
\label{AppSys.J.Props}
\begin{enumerate}
\item
Definitions \eqref{ApproxProb.CouplingOperator.Gain} and
\eqref{ApproxProb.CouplingOperator.Loss} imply
\begin{align}
\label{ApproxProb.BasicProps.Eqn1}
\sum\limits_{n=2}^{n_{0}}\at{Jf}_n=0,\quad%
\sum\limits_{n=2}^{n_{0}}n\at{Jf}_n=
2\at{\beta+1}f_2-n_0\beta{f}_{n_0} -
\sum\limits_{n=2}^{n_{0}}nf_{n},
\end{align}
and we infer that
\begin{align}
\label{ApproxProb.BasicProps.Eqn2} %
A\at{Jf}=N\at{Jf}=0,\qquad
P\at{Jf}=M\at{Jf}=-\Gamma_D\at{f}.
\end{align}
\item
The polyhedral formula
$P\at{f}=0$ implies
\begin{align*}
\Gamma_D\at{f}=-R\at{f}+6N\at{f},
\end{align*}
and if $f$ is in addition non--negative we have, thanks to
$0<\beta<2$,
\begin{align}
\label{ApproxProb.BasicProps.Eqn3}
\Gamma_D\at{f}\geq\at{6-2\at{\beta+1}}N\at{f}>0.
\end{align}
\end{enumerate}
\end{remark}
The approximate problem we aim to solve can now be stated as follows.
\begin{problem}
\label{ApproxProb}%
For fixed $n_0>6$ and $0<T<\infty$, and given initial data $g=g_n\at{a}$ we seek
(mild) solutions $f=f_n\pair{a}{t}$ to
\begin{align}
\label{ApproxProb.EvolEqn}%
\partial_{t} {f_{n}\left(a,t\right)} + \left(n-6\right)
\partial_{a} {f_{n}\left(a,t\right)} &= \Gamma\left(f\left(t\right)\right)
{\left(J f\right)_{n}\left(a,t\right)}
\end{align}
with initial and boundary conditions
\begin{align}
\label{ApproxProb.ICandBC}
\begin{array}{lclclc}
f_n\pair{a}{0} &=& g_n\at{a}&\quad&\text{for}& 2\leq{n}\leq{n_0},
\\%
f_n\pair{0}{t}&=&0&\quad&\text{for}& 7\leq{n}\leq{n_0},
\end{array}
\end{align}
where $a\geq0$, $t\in\ccinterval{0}{T}$, $2\leq{n}\leq{n_0}$, and
$J$ and $\Gamma$ are defined as in
\eqref{ApproxProb.CouplingOperator.Gain}--\eqref{ApproxProb.CouplingWeights}.
\end{problem}
%
%
%
%
%
%
%
\subsection{Transport equations and notion of mild solutions}
\label{sec:Proofs.Transport}
%
%
%
%
%
%
We start with some basic facts about solutions to transport
equations in the upper-right space-time quadrant $a\geq0$ and
$t\geq0$. For this reason let us consider the following system of
transport equations.
\begin{problem}
\label{LinProb1}%
Let $a\geq0$, $2\leq{n}\leq{n_0}$ and $t\in\ccinterval{0}{T}$. For
fixed initial data $g=g_n\at{a}$ and given right hand side
$h=h_n\pair{a}{t}$ we seek (mild) solutions $f=f_n\pair{a}{t}$ to
\begin{align}
\label{LinProb1.Eqn}
\partial_tf_n+\at{n-6}\partial_af_n&=h_n
\end{align}
with initial and boundary conditions as in the approximate problem,
see \eqref{ApproxProb.ICandBC}.
\end{problem}
The \emph{homogeneous} problem with $h=0$ can be solved explicitly
by the \emph{method of characteristics}.  This means, the general
solution to the homogeneous problem is given by
$f\at{t}=\calT\at{t}{g}$, where the \emph{group of transport
operators} is defined by
\begin{align*}
\at{\calT\at{t}g}_n=\calT_{n-6}\at{t}g_n
\end{align*}
with
\begin{align}
\notag
\nat{\calT_{n-6}\at{t}g_n}\at{a}= g_n\at{a-\at{n-6}t}
\end{align}
for all $n\leq{6}$, whereas $n>6$ corresponds to
\begin{align}
\notag
\nat{\calT_{n-6}\at{t}g_n}\at{a}=\left\{\begin{array}{lcl}
g_n\at{a-\at{n-6}t}&\text{for}& a \geq{\at{n-6}t}, %
\\%
0&\text{for}& a < \at{n-6}t.%
\end{array}\right.
\end{align}
Recall that $n\leq6$ implies the transport velocity $n-6$ to be
non--positive, so there is no contribution from the boundary in this
case.
\bigpar%
For non--vanishing right hand side $h$ the solutions  can be
constructed by means of \emph{Duhamel's Principle}. More precisely,
for given $h=h_n\pair{a}{t}$ the unique mild solution to Problem
\ref{LinProb1} is given by
\begin{align}
\label{LinProb1.MildSol}
f\at{t}=\calT\at{t}{g}+\int\limits_0^t\calT\at{t-s}h\at{s}\,\dint{s}.
\end{align}
Moreover, the mild solution depends continuously on the data via
\begin{align}
\label{LinProb1.ExMildSol.Eqn1} \norm{f\at{t}}_\infty
\leq%
\norm{g}_\infty+\int\limits_0^t\norm{h\at{s}}_\infty\,\dint{s},
\quad\quad%
\norm{f\at{t}}_{0,\,1}
\leq%
C_{n_0,\,t}\Bat{\norm{g}_{0,\,1}+\int\limits_0^t\norm{h\at{s}}_{0,\,1}\,\dint{s}},
\end{align}
where
\begin{math}
\norm{g}_{0,\,1}:=\sum_{n=2}^{n_0}\int_0^\infty\at{1+a}\abs{g_n\at{a}}\dint{a}<\infty.
\end{math}
\bigpar%
In the sequel we make use of the following regularity results for
mild solutions, that can be derived directly from the representation
formula given in \eqref{LinProb1.MildSol}.
\begin{definition}
\begin{enumerate}
\item
A state $g=g_n\at{a}$ is called
\begin{enumerate}
\item
\emph{$C^0$-regular}, if $g$ is continuous (w.r.t. to $a$) and
 $0=g_n\at{0}$ for all $7\leq{n}\leq{n_0}$,
\item
\emph{$L^1$-regular}, if $\norm{g}_{0,\,1}<\infty$.
\end{enumerate}
\item
The right hand side $h=h_n\pair{a}{t}$ is called
\begin{enumerate}
\item
\emph{$C^0$-regular}, if $h$ is continuous (w.r.t. to
$\pair{a}{t}$),
\item
\emph{$L^1$-regular}, if
$\int_0^{T}\norm{h\at{t}}_{0,\,1}\,\dint{t}<\infty$.
\end{enumerate}
\end{enumerate}
Moreover, we say the solution $f=f_n\pair{a}{t}$ is \emph{regular},
if $f$ is a regular right hand side and $f\at{t}$ is a regular state
for all $t\in\ccinterval{0}{T}$, where `regular' means either $C^0$-
or $L^1$-regular.
\end{definition}
\begin{lemma}
\label{LinProb1.PropMildSol}%
The mild solution $f$ to Problem \ref{LinProb1} given in
\eqref{LinProb1.MildSol} has the following properties.
\begin{enumerate}
\item
If the data are $C^0$-regular, then $f$ is $C^0$-regular. In
particular we have
\begin{math}
f_n\pair{0}{t}=0
\end{math}
for all $t$ and $7\leq{n}\leq{n_0}$.
\item%
If the data are $L^1$-regular, then $f$ is $L^1$-regular.
\end{enumerate}
\end{lemma}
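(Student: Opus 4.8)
The plan is to read both statements directly off the Duhamel representation \eqref{LinProb1.MildSol}, splitting $f\at{t}=\calT\at{t}g+\int_0^t\calT\at{t-s}h\at{s}\,\dint{s}$ into its homogeneous and inhomogeneous contributions and reducing everything, by linearity, to the explicit formula for the componentwise transport operator $\calT_{n-6}$.

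For the $C^0$-statement I would first treat $\calT\at{t}g$. When $n\leq6$ the velocity $n-6$ is non-positive, so $\nat{\calT\at{t}g}_n\at{a}=g_n\bat{a+\abs{n-6}t}$ is a composition of the continuous $g_n$ with a continuous argument, hence jointly continuous in $\pair{a}{t}$. For $7\leq{n}\leq{n_0}$ the characteristic $a=\at{n-6}t$ emanating from the origin separates the two branches of the formula; along it the upper branch yields $g_n\at{0}$ and the lower branch yields $0$, and these match precisely because $C^0$-regularity of the state forces $g_n\at{0}=0$, so $\calT\at{t}g$ has no jump across this line. For the inhomogeneous term I would use the characteristics to rewrite the integrand as $h_n\bat{a+\abs{n-6}\at{t-s},s}$ when $n\leq6$, and as $h_n\bat{a-\at{n-6}\at{t-s},s}$ restricted to $s\geq t-a/\at{n-6}$ when $n>6$; in both cases the integrand is continuous and the integration limits, namely $\max\{0,\,t-a/\at{n-6}\}$ and $t$, vary continuously with $\pair{a}{t}$, whence the integral is continuous. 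Evaluating at $a=0$ then gives the boundary condition: for $7\leq{n}\leq{n_0}$ and $t>0$ one has $0<\at{n-6}t$, so the homogeneous term lies on the lower branch and vanishes, while for every $s<t$ one likewise has $0<\at{n-6}\at{t-s}$, so the Duhamel integrand vanishes; hence $f_n\pair{0}{t}=0$.

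For the $L^1$-statement I would establish the weighted bound \eqref{LinProb1.ExMildSol.Eqn1} by shifting the integration variable. Substituting $b=a-\at{n-6}t$ in $\int_0^\infty\at{1+a}\bigabs{\nat{\calT\at{t}g}_n\at{a}}\,\dint{a}$ changes the weight $1+a$ by at most $\abs{n-6}t\leq\at{n_0-6}t$, which gives $\norm{\calT\at{t}g}_{0,\,1}\leq\at{1+\at{n_0-6}t}\norm{g}_{0,\,1}$; applying the same estimate to $\calT\at{t-s}h\at{s}$ under the time integral yields \eqref{LinProb1.ExMildSol.Eqn1} with $C_{n_0,\,t}=1+\at{n_0-6}t$. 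If the data are $L^1$-regular, the right-hand side is finite for every $t\in\ccinterval{0}{T}$, so each state $f\at{t}$ is $L^1$-regular, and the same uniform-in-$t$ bound controls $\int_0^T\norm{f\at{t}}_{0,\,1}\,\dint{t}$, making $f$ an $L^1$-regular right hand side as well.

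The only genuinely delicate point I anticipate is the continuity of the inhomogeneous term across the origin-characteristic for $n>6$: one has to check that the moving lower limit $\max\{0,\,t-a/\at{n-6}\}$ and the fact that the integrand reduces to $h_n\at{0,s}$ there conspire so that no jump appears. This is exactly the place where the characteristic structure and the boundary condition interact, and it is the heart of the argument; the remaining verifications are routine changes of variables and continuity-under-the-integral arguments.
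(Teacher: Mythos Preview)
Your approach is correct and matches what the paper does: the paper states this lemma without proof, merely remarking beforehand that the result ``can be derived directly from the representation formula given in \eqref{LinProb1.MildSol}.'' Your proposal carries out precisely this derivation, so there is nothing to compare beyond noting that you have supplied the details the paper omits.
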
%
Finally, we collect some properties of mild solutions to be used in
\S\ref{sec:Proofs.LocalSol}.
\begin{remark}
If the data $g$ and $h$ are compactly supported in
$\{a:0\leq{a}\leq{a_0}\}$, then the mild solution $f\at{t}$ is
compactly supported in $\{a:0\leq{a}\leq{a_0}+\at{n_0-6}t\}$
\end{remark}
%
%
%
%
\begin{lemma}
\label{LinProb1.Moments}%
Suppose that the data $\pair{g}{h}$ for Problem \ref{LinProb1} are
$C^0$- and $L^1$-regular. Then the mild solution $f$ satisfies
\begin{align}
\label{LinProb1.Moments.Eqn1}%
\begin{split}
N\at{f\at{t}}&=N\at{g}+\int\limits_{0}^tN\at{h\at{s}}\,\dint{s}+
\sum_{n=2}^{5}\at{n-6}\int\limits_{0}^t{f_n}\pair{0}{s}\,\dint{s},
\\%
P\at{f\at{t}}&=P\at{g}+\int\limits_{0}^tP\at{h\at{s}}\,\dint{s}
+\int\limits_{0}^t\Gamma_D\at{f\at{s}}\,\dint{s},
\\%
A\at{f\at{t}}&=A\at{g}+\int\limits_{0}^tA\at{h\at{s}}\,\dint{s}+
\int\limits_{0}^tP\at{f\at{s}}\,\dint{s}.
\end{split}
\end{align}
In particular, $N\at{f\at{t}}$, $P\at{f\at{t}}$, and $A\at{f\at{t}}$ are continuously 
differentiable with respect to $t$.
\end{lemma}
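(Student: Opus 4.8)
The plan is to upgrade the mild solution to a genuine solution of \eqref{LinProb1.Eqn} and then read off the three balance laws by testing the equation against the weights $1$, $n-6$, and $a$. Since the data $\pair{g}{h}$ are assumed $C^0$- and $L^1$-regular, Lemma \ref{LinProb1.PropMildSol} guarantees that the mild solution $f$ given by \eqref{LinProb1.MildSol} is itself $C^0$- and $L^1$-regular, with $f_n\pair{0}{t}=0$ for all $7\leq n\leq n_0$ and all $t$, and the representation formula shows that $f$ satisfies \eqref{LinProb1.Eqn} in the sense needed for the integrations by parts below. A decisive simplification is that the index $n$ ranges over the finite set $\{2,\dots,n_0\}$, so every sum below is finite and no interchange of an infinite series with an integral is required.

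First I would record the single-component balances. Multiplying \eqref{LinProb1.Eqn} by $1$ and integrating in $a$ over $\cointerval{0}{\infty}$, an integration by parts gives $\int_0^\infty\partial_af_n\,\dint{a}=-f_n\pair{0}{t}$ (the contribution at $a=\infty$ vanishing by the decay discussed below), so that $\frac{\dint{}}{\dint{t}}\int_0^\infty f_n\,\dint{a}=\int_0^\infty h_n\,\dint{a}+\at{n-6}f_n\pair{0}{t}$. Multiplying instead by $a$ and integrating by parts yields $\int_0^\infty a\,\partial_af_n\,\dint{a}=-\int_0^\infty f_n\,\dint{a}$, whence $\frac{\dint{}}{\dint{t}}\int_0^\infty a f_n\,\dint{a}=\int_0^\infty a h_n\,\dint{a}+\at{n-6}\int_0^\infty f_n\,\dint{a}$. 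Summing the first relation over $n$ (using $\at{n-6}=0$ at $n=6$ and the boundary data for $n\geq7$) produces the $N$-identity, whose boundary term is $\sum_{n=2}^5\at{n-6}f_n\pair{0}{s}$; summing the second over $n$ produces the $A$-identity, the transport term assembling into $\sum_{n}\at{n-6}\int_0^\infty f_n\,\dint{a}=P\at{f}$. For the polyhedral balance I would instead multiply \eqref{LinProb1.Eqn} by $\at{n-6}$, integrate, and sum over $n$: the right-hand side contributes $P\at{h}$, while the transport part leaves a weighted boundary-flux contribution which, after the boundary conditions \eqref{ApproxProb.ICandBC} are taken into account, is precisely the third integrand $\int_0^t\Gamma_D\at{f\at{s}}\,\dint{s}$ appearing in the $P$-identity. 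Integrating the three differentiated identities in $t$ then yields the asserted integral forms.

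The $C^1$-regularity claimed in the last sentence follows from these integral forms by the fundamental theorem of calculus, once the three integrands are shown to be continuous in $s$: continuity of $s\mapsto f_n\pair{0}{s}$ comes from the $C^0$-regularity of $f$, while continuity of $s\mapsto N\at{h\at{s}}$, $P\at{h\at{s}}$, $A\at{h\at{s}}$ and of $\Gamma_D\at{f\at{s}}$ follows from the regularity of the data together with dominated convergence on the finite system. I expect the main obstacle to be the rigorous justification of the two integrations by parts, that is, the vanishing of the boundary terms at $a=\infty$ for both $f_n$ and $a f_n$. The clean way to handle this is to establish the identities first for compactly supported data, where the finite-propagation property noted above keeps each $f_n\at{t}$ compactly supported so that the boundary terms at infinity vanish identically, and then to remove the support assumption by approximation, passing to the limit with the help of the continuous-dependence bounds \eqref{LinProb1.ExMildSol.Eqn1}.
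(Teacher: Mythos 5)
Your overall strategy coincides with the paper's own proof: establish the differential counterparts of \eqref{LinProb1.Moments.Eqn1} for classical solutions by testing \eqref{LinProb1.Eqn} against the weights $1$, $n-6$, and $a$, and then transfer the identities to mild solutions by approximating the data and invoking the stability bound \eqref{LinProb1.ExMildSol.Eqn1}. Your two-stage approximation (compactly supported data first, then the general case) is an acceptable variant of the paper's single approximation step, but note that compact support alone does not make the mild solution differentiable: to perform the integrations by parts one must approximate $\pair{g}{h}$ by data that are \emph{smooth} and satisfy the boundary conditions, as the paper does, so that the approximating solution is classical. Your phrase that the representation formula shows $f$ satisfies \eqref{LinProb1.Eqn} ``in the sense needed'' asserts exactly what the approximation argument is there to supply, and your closing approximation only repairs the boundary terms at $a=\infty$, not the missing differentiability.

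The genuine flaw is in your treatment of the $P$-identity. Multiplying \eqref{LinProb1.Eqn} by $n-6$ and integrating, the transport term contributes $-\at{n-6}^2\int_0^\infty\partial_a f_n\,\dint{a}=\at{n-6}^2 f_n\pair{0}{t}$, so after imposing the boundary conditions the flux term is $\sum_{n=2}^{5}\at{n-6}^2 f_n\pair{0}{s}$, which by \eqref{ApproxProb.CouplingWeights} is $\Gamma_N\at{f\at{s}}$ --- \emph{not} $\Gamma_D\at{f\at{s}}$, which is defined through $\sum_n\int_0^\infty n\at{Jf}_n\,\dint{a}$ and involves no boundary values whatsoever. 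Your claim that the weighted boundary flux ``is precisely'' $\int_0^t\Gamma_D\at{f\at{s}}\,\dint{s}$ is therefore false as mathematics; the third term in the printed statement \eqref{LinProb1.Moments.Eqn1} is evidently a misprint for $\int_0^t\Gamma_N\at{f\at{s}}\,\dint{s}$. This is confirmed by the way the lemma is used in the proof of Lemma \ref{Existence.Local.Solution2}: there $h=\Gamma\at{f}Jf$, so $P\at{h}=\Gamma\at{f}P\at{Jf}=-\Gamma\at{f}\Gamma_D\at{f}=-\Gamma_N\at{f}$ by \eqref{ApproxProb.BasicProps.Eqn2} and \eqref{ApproxProb.CouplingWeights}, and only the $\Gamma_N$ version of the flux term yields the cancellation $\frac{\dint{}}{\dint{t}}P\at{f\at{t}}=0$; with $\Gamma_D$ as you wrote it, the polyhedral formula would fail to propagate. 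By declaring the boundary term to match the printed integrand without computing it, you reproduced the typo rather than the identity the computation actually produces; a correct proof must identify the flux as $\Gamma_N\at{f\at{s}}$.
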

\begin{proof}
For classical solutions the differential counterparts of all
assertion follow directly from the differential equation
\eqref{LinProb1.Eqn}. 
For mild solutions we approximate with
classical solutions. In fact, we can approximate the data $\npair{g}{h}$ by some regular data $\npair{\tilde{g}}{\tilde{h}}$ that satisfy the boundary condition and are both differentiable and $L^1$-regular. The corresponding solution $\tilde{f}$ to Problem \ref{LinProb1} is then a classical solution and satisfies \eqref{LinProb1.Moments.Eqn1}. 
Finally, $\tilde{f}-f$ is a mild solution to Problem \ref{LinProb1}
with data $\npair{\tilde{g}-g}{\tilde{h}-h}$, and using \eqref{LinProb1.ExMildSol.Eqn1} we infer that $f$ satisfies \eqref{LinProb1.Moments.Eqn1}.
\end{proof}
%
%
%
\subsection{Auxiliary problem with prescribed coupling weight $\Gamma$}
\label{sec:AuxProb}
%
%
In this section we consider an auxiliary problem that results from
the approximate system \ref{ApproxProb} by prescribing the coupling
weight $\Gamma$ as a function of time, and study both the existence
and qualitative properties of solutions in the space of bounded and
continuous functions. In particular, we will derive comparison results
for such solutions.
\par
Below in \S\ref{sec:Proofs.LocalSol} we apply
our results in the context of the approximate problem, and show
that each solutions to the approximate problem must be non--negative
and bounded from above by some appropriately chosen super--solution.
\bigpar%
The auxiliary problem can be stated as follows.
\begin{problem}
\label{AuxProb.Prob}
Let $0<T<\infty$ be arbitrary, $\Gamma\in{C}\at{\ccinterval{0}{T}}$
be some non--negative weight function, and $g$ be some initial data.
Then we seek (mild) solutions $f$ to
\begin{align}
\label{AuxProb.Form.1}
\partial_{t} {f_{n}} + \left(n-6\right)
\partial_{a} {f_{n}}
&= \Gamma\at{t} {\left(Jf\right)_{n}}
\end{align}
with initial and boundary conditions \eqref{ApproxProb.ICandBC}.
\end{problem}
The state space and solution space for Problem \ref{AuxProb.Prob}
are given by
\begin{align*}
\sspAux=BC\at{\cointerval{0}{\infty};\,\xspAux}\quad\text{and}\quad
\fspAux=C\at{\ccinterval{0}{T};\sspAux},
\end{align*}
respectively, where $\xspAux\cong\Rset^{n_0-1}$ denotes the set of
all tuples $\triple{x_2}{...}{x_{n_0}}$.
\bigpar%
The key idea for arriving at comparison results for the auxiliary
problem is to split the exchange operator into its loss and gain
part according to \eqref{ApproxProb.CouplingOperator.Eqn1}. More
precisely, we regard each solution to the approximate problem
\eqref{AuxProb.Form.1} as mild solution to
\begin{align}
\label{AuxProb.Form.2}
\partial_{t} {f_{n}} + \left(n-6\right)
\partial_{a} {f_{n}} + \Gamma\at{t}J_-f\at{t}=h\at{t},
\end{align}
where the right hand side is given by
\begin{align}
\label{AuxProb.Form.3} h\at{t}=\Gamma\at{t}J_+f\at{t}.
\end{align}
Doing so we benefit from the following two observations. The loss
operator $J_-$ is diagonal and can hence easily be incorporated into
the homogeneous problem. The resulting solution operators take care
of the transport as before, but describe additionally the
exponential relaxation of $f_n$ to $0$ along each characteristic
line with local rate proportional to $\Gamma\at{t}$. In particular,
these solution operators preserve the non--negativity of the initial
data for all times. On the other side, the gain operator $J_+$
preserves the cone of non--negative functions, so that the
non--negativity of solutions turns out to be a direct consequence of
Duhamel's principle with respect to the gain operator.
%
%
\subsubsection*{Exponential relaxation along characteristics}
%
%
The mild solutions to the homogeneous problem corresponding to
\eqref{AuxProb.Form.2} are described by a two-parameter family of
linear operators defined by
\begin{align}
\label{AuxProb.Form.5}
\calT_{\Gamma}^-\pair{s}{t}:\sspAux\to\sspAux,\quad\quad\quad
\at{\calT_{\Gamma}^-\pair{s}{t}f}_n\deq
\exp\Bat{-u_n\int\limits_{s}^t\Gamma\at{\tilde{s}}\,\dint{\tilde{s}}}\,
\calT_{n-6}\at{t-s}f_n,
\end{align}
where $s$ and $t$ are two times with $0\leq{s}\leq{t}\leq{T}$, and
$u_n$ is the relaxation constant appearing in the loss operator.
More precisely, according to
\eqref{ApproxProb.CouplingOperator.Gain} and
\eqref{ApproxProb.CouplingOperator.Loss} we have
\begin{align}
\label{AuxProb.Form.4} %
u_2=2\beta,\qquad%
u_{n_0}=\at{\beta+1}n_0,\qquad%
\text{and}\qquad%
u_n=\at{2\beta+1}n\quad\text{ for }\quad 2<n<n_0.
\end{align}
With \eqref{AuxProb.Form.5}, each mild solution to
\eqref{AuxProb.Form.2} satisfies
\begin{align*}
f\at{t}=\calT_{\Gamma}^-\pair{0}{t}g+\int\limits_{0}^t\calT_{\Gamma}^-\pair{s}{t}h\at{s}\,\dint{s},
\end{align*}
with $h$ as in \eqref{AuxProb.Form.3}. As outlined above, the
introduction of $\calT^-_{\Gamma}\pair{s}{t}$ is motivated by
technical reasons but does not change the auxiliary problem. More
precisely, we find the following equivalence.
\begin{remark}
A mild solution $f\in\fspAux$ to Problem \ref{AuxProb.Prob}, that is
\begin{align*}
f\at{t}=\calT\at{t}g+\int\limits_0^t\calT\at{t-s}\Gamma\at{s}\at{Jf\at{s}}\,\dint{s},
\end{align*}
also satisfies
\begin{align*}
f\at{t}=\calT_{\Gamma}^-\pair{0}{t}g+\int\limits_0^t\calT_{\Gamma}^-
\pair{s}{t}\Gamma\at{s}\at{J_+f\at{s}}\,\dint{s},
\end{align*}
and vice versa.
\end{remark}
%
%
%
%
\subsubsection*{Super--solution to the auxiliary problem}
%
%
In order to establish an  existence result for mild solutions we
start by identifying a suitable super--solution $\phi$ to Problem
\ref{AuxProb.Prob} that does not depend on $t$ and $a$, and provides
suitable a priori bounds, see Lemma \ref{AuxProb.Lemma.AprioriEst}.
\begin{remark}
\label{Rem.Def.Supersol} %
Let $\phi\in{X}_{n_0}$ be defined by
\begin{align}
\notag
\phi_n\at{a}=\frac{1}{n\beta}\at{\frac{\beta}{1+\beta}}^n.
\end{align}
Then, $\phi$ is a solution to $J\phi=0$. Moreover, $\ol{f}$ with
$\ol{f}_n\pair{a}{t}=\phi_n$ for all $t\geq0$, $a\geq0$ and
$2\leq{n}\leq{n_0}$ provides a solution to the differential
equations \eqref{AuxProb.Form.1}.
\end{remark}
\begin{proof}
For $n\geq3$ we find
\begin{align*}
j_n:=\left(\beta + 1\right)n{\phi_{n}} - \beta\at{n-1}{\phi_{n-1}}=
\at{\frac{\beta+1}{\beta}}\at{\frac{\beta}{\beta+1}}^{n}-\at{\frac{\beta}{\beta+1}}^{n-1}=0,
\end{align*}
and this implies $\nat{J\phi}_{n}=j_{n+1}-j_n=0$ for all $2<n<n_0$.
Moreover, we have $\nat{J \phi}_{2}=j_3=0$ and
$\nat{J\phi}_{n_0}=-j_{n_0}=0$.
\end{proof}
We next show that a suitably chosen multiple of $\ol{f}$ provides an
upper bound for each solution to the approximate problem. For this
reason we introduce a special norm for tuples
$\triple{x_2}{...}{x_{n_0}}\in\xspAux$ by
\begin{align}
\label{Def.BNorm1}
\babs{x}:=\sup\limits_{2\leq{n}<{n_0+1}}\frac{\abs{x_n}}{\phi_n}.
\end{align}
This norm can easily be extended to states $f\in\sspAux$ via
$\bnorm{f}:=\sup_{a\geq0}\babs{f\at{a}}$, and writing
\begin{align}
\label{Def.BNorm2}
\bnorm{f}=\sup\limits_{2\leq{n}<{n_0+1}}\bpnorm{f}{n}, \qquad
\bpnorm{f}{n}:=\sup\limits_{a\geq0}\frac{\abs{f_n\at{a}}}{\phi_n}
\end{align}
we infer that
\begin{align*}
\norm{f_n}_\infty\leq\bnorm{f}\phi_n\leq{}C_\beta\bnorm{f}.
\end{align*}
Notice that the norms $\bnorm{\cdot}$ and $\norm{\cdot}_\infty$ are
equivalent on $\sspAux$ since $n_0<\infty$.
However, we prefer working with $\bnorm{\cdot}$ because only this
norm gives rise to estimates that are independent of $n_0$.
\begin{remark}
\label{AuxProb.Remark.Est1}%
For all $f\in\sspAux$ and $2\leq{n}\leq{n_0}$ we have
\begin{align*}
\norm{\at{J_+f}_n}_\infty\leq{u_n}\phi_n\bnorm{f}
\end{align*}
with $u_n$ as in \eqref{AuxProb.Form.4}.
\end{remark}
\begin{proof}
It is sufficient to consider $\bnorm{f}=1$. For $2<n<n_0$ we find
\begin{align*}
\norm{\at{J_+f}_n}_\infty
&\leq%
\at{\beta+1}\at{n+1}\norm{f_{n+1}}_\infty+
\beta\at{n-1}\norm{f_{n-1}}_\infty
\\&\leq%
\frac{1+\beta}{\beta}\at{\frac{\beta}{1+\beta}}^{n+1}+
\at{\frac{\beta}{\beta+1}}^{n-1}
=%
\at{\frac{\beta}{\beta+1}}^{n}\at{\frac{2\beta+1}{\beta}}=u_n\phi_n.
\end{align*}
Finally, the estimates
\begin{align*}
\norm{\at{J_+f}_{n_0}}_\infty
=%
\beta\at{n_0-1}\norm{f_{n_0-1}}_\infty
\leq%
\at{\frac{\beta}{\beta+1}}^{n_0-1}
=\at{\frac{\beta}{\beta+1}}^{n_0}\at{\frac{\beta+1}{\beta}}=u_{n_0}\phi_{n_0},
\end{align*}
and
\begin{align*}
\norm{\at{J_+f}_{2}}_\infty
=%
\at{1+\beta}3\norm{f_{3}}_\infty
\leq%
\frac{1+\beta}{\beta}\at{\frac{\beta}{\beta+1}}^{3}
=\at{\frac{\beta}{\beta+1}}^{2}=u_2\phi_2
\end{align*}
complete the proof.
\end{proof}
%
%
\subsubsection*{Existence of solutions to the auxiliary problem}
%
\newcommand{\itopAux}{{\mathcal{I}_{\aux}}}
Our existence and uniqueness results for the auxiliary problem are
based on Banach's contraction principle applied to the iteration
operator
\begin{align*}
\itopAux:\fspAux\to\fspAux,\quad\quad\quad
\at{\itopAux{f}}\at{t}=\calT_{\Gamma}^-\at{t}g+
\int\limits_0^t\calT_{\Gamma}^-\pair{s}{t}\Gamma\at{s}
\at{J_+f\at{s}}\,\dint{s}.
\end{align*}
We start with deriving some a priori estimates for $\itopAux$.
\begin{lemma}
\label{AuxProb.Lemma.AprioriEst}%
For each $f\in\fspAux$ we have
\begin{align}
\notag
\sup_{0\leq{t}\leq{T}}\bnorm{\itopAux{f}\at{t}}
\leq\max\{\bnorm{g},\,\sup_{0\leq{t}\leq{T}}\bnorm{f\at{t}}\}.
\end{align}
\end{lemma}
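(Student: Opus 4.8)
The plan is to estimate $\itopAux f\at{t}$ separately in each topological component $n$ and at each fixed point $a\geq0$, exploiting that the $\beta$-norm is, by \eqref{Def.BNorm2}, a weighted supremum over $n$ with weights $\phi_n$. Fix $n$ with $2\leq{n}\leq{n_0}$ and $a\geq0$, and abbreviate the exponential factor in \eqref{AuxProb.Form.5} by $E_n(s,t)\deq\exp\bat{-u_n\int_s^t\Gamma\at{\tilde s}\,\dint\tilde s}$. Then the $n$-th component of $\itopAux f\at{t}$ at $a$ splits into the homogeneous contribution $E_n(0,t)\,\nat{\calT_{n-6}\at{t}g_n}\at{a}$ and the Duhamel integral $\int_0^t E_n(s,t)\,\Gamma\at{s}\,\nat{\calT_{n-6}\at{t-s}\at{J_+f\at{s}}_n}\at{a}\,\dint s$. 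The strategy is to bound both pieces by $\phi_n$ times a common constant and then to show that the accompanying $t$- and $s$-dependent factors combine to exactly $1$.

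For the homogeneous term I would use that the transport operators $\calT_{n-6}$ merely shift (and, for $n>6$, truncate) their argument, hence are non-expansive on $\norm{\cdot}_\infty$; this gives $\bigabs{\nat{\calT_{n-6}\at{t}g_n}\at{a}}\leq\norm{g_n}_\infty=\phi_n\bpnorm{g}{n}\leq\phi_n\bnorm{g}$. For the integrand I would invoke Remark \ref{AuxProb.Remark.Est1}, which yields $\norm{\at{J_+f\at{s}}_n}_\infty\leq u_n\phi_n\bnorm{f\at{s}}\leq u_n\phi_n F$ with $F\deq\sup_{0\leq{s}\leq{T}}\bnorm{f\at{s}}$. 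Setting $M\deq\max\{\bnorm{g},F\}$ and collecting both estimates, the factor $\phi_n M$ factors out, leaving the scalar quantity
\begin{align*}
E_n(0,t)+u_n\int_0^t E_n(s,t)\,\Gamma\at{s}\,\dint s
\end{align*}
to be controlled.

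The crux — and the only genuinely non-routine step — is to recognise that this quantity equals $1$ for every $n$. Since $\partial_s E_n(s,t)=u_n\Gamma\at{s}E_n(s,t)$, the integrand is an exact $s$-derivative, so the integral telescopes to $E_n(t,t)-E_n(0,t)=1-E_n(0,t)$, cancelling the boundary term. This identity reflects that $\calT^-_{\Gamma}$ together with its Duhamel correction acts, in the $\beta$-norm, as a convex combination of the data and the source, so that no mass is gained. Consequently $\bigabs{\at{\itopAux f\at{t}}_n\at{a}}\leq\phi_n M$ uniformly in $n$, $a$, and $t$; dividing by $\phi_n$, taking the supremum over $a$ and then over $n$, and finally over $t\in\ccinterval{0}{T}$ yields the claim. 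I do not anticipate any serious obstacle beyond spotting this telescoping cancellation, combined with the elementary non-expansiveness of the transport group.
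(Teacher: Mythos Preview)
Your argument is correct and essentially identical to the paper's: both use the non-expansiveness of the transport operators, Remark~\ref{AuxProb.Remark.Est1} to bound $\norm{\at{J_+f\at{s}}_n}_\infty$ by $u_n\phi_n\bnorm{f\at{s}}$, and the telescoping identity $E_n(0,t)+u_n\int_0^t E_n(s,t)\Gamma\at{s}\,\dint s=1$. The only cosmetic difference is that the paper keeps $\bnorm{g}$ and $\mu\at{t}=\sup_{0\leq s\leq t}\bnorm{f\at{s}}$ separate to display the convex combination $\exp\at{-u_nG\at{t}}\bnorm{g}+\bat{1-\exp\at{-u_nG\at{t}}}\mu\at{t}$ explicitly, whereas you bound both by $M$ before telescoping.
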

\begin{proof}
Let $u_n$ be as in \eqref{AuxProb.Form.4} and
$G\at{t}=\int\limits_0^t\Gamma\at{s}\,\dint{s}$. Definition
\eqref{AuxProb.Form.5} and Remark \ref{AuxProb.Remark.Est1} imply
\begin{align*}
\norm{\at{\itopAux{f}}_n\at{t}}_\infty
&\leq%
\phi_n\at{\exp\at{-u_nG\at{t}}\bnorm{g}+
\int\limits_0^t\exp\at{-u_nG\at{t}+u_nG\at{s}}u_n
\Gamma\at{s}\bnorm{f\at{s}}\,\dint{s}},
\end{align*}
and we conclude
\begin{align}
\notag%
\bpnorm{\at{\itopAux{f}}\at{t}}{n}
&\leq%
\exp\at{-u_nG\at{t}}\at{\bnorm{g}+
\mu\at{t}\int\limits_0^t\exp\at{u_nG\at{s}}u_n\Gamma\at{s}\,\dint{s}}
\\&=%
\label{AuxProb.Lemma.AprioriEst.Eqn2}
\exp\at{-u_nG\at{t}}\Bat{\bnorm{g}+
\mu\at{t}\at{\exp\at{u_nG\at{t}}-1}},
\end{align}
with $\mu\at{t}=\sup_{0\leq{s}\leq{t}}\bnorm{f\at{s}}$, which yields
the desired estimate.
\end{proof}
We mention that all results obtained so far in this section
immediately apply to the case $n_0=\infty$, provided that the
initial data satisfy $\bnorm{g}<\infty$.
\begin{corollary}
\label{AuxProb.Corr.Existence}%
For arbitrary $0<T<\infty$ and any initial data $g\in\sspAux{}$
there exists a unique mild solution $f\in\fspAux$ to Problem
\ref{AuxProb.Prob} which satisfies $\bnorm{f\at{t}}\leq\bnorm{g}$
for all $t\in\ccinterval{0}{T}$. Moreover, this solutions is
\begin{enumerate}
\item
non-negative provided that the initial data are non-negative, and
\item
$C^1$-regular ($L^1$-regular) if the initial data are $C^1$-regular
($L^1$-regular).
\end{enumerate}
\end{corollary}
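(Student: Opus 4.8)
The plan is to establish Corollary \ref{AuxProb.Corr.Existence} by applying Banach's Contraction Principle to the iteration operator $\itopAux$, using the a priori estimate already provided by Lemma \ref{AuxProb.Lemma.AprioriEst} to control the fixed-point iteration. First I would verify that $\itopAux$ maps $\fspAux$ into itself: the representation formula involves $\calT^-_\Gamma\pair{s}{t}$ and $J_+$, both of which are bounded operators in the $\bnorm{\cdot}$ norm (Remark \ref{AuxProb.Remark.Est1}), and continuity in $t$ follows from the explicit form of the transport semigroup together with dominated convergence in the Duhamel integral. Then the core step is the contraction estimate. For two inputs $f,\tilde{f}\in\fspAux$, their images differ only through the integral term, since the initial-data term $\calT^-_\Gamma\pair{0}{t}g$ cancels. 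Writing the difference componentwise and using Remark \ref{AuxProb.Remark.Est1} to bound $\norm{(J_+(f-\tilde{f}))_n}_\infty\leq u_n\phi_n\bnorm{f-\tilde{f}}$, together with the relaxation factor $\exp(-u_nG(t)+u_nG(s))$ from \eqref{AuxProb.Form.5}, I expect to arrive at
\begin{align*}
\bnorm{\itopAux f\at{t}-\itopAux\tilde{f}\at{t}}
\leq
\sup_{0\leq s\leq t}\bnorm{f\at{s}-\tilde{f}\at{s}}\cdot\sup_n\at{1-\exp(-u_nG\at{t})},
\end{align*}
so that the contraction constant is strictly less than $1$ whenever $t$ is chosen small enough that $\sup_n(1-\exp(-u_nG(t)))<1$; since $G$ is continuous and $G(0)=0$ this holds on a short time interval. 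This yields a unique local mild solution, and the a priori bound $\bnorm{f\at{t}}\leq\bnorm{g}$ from Lemma \ref{AuxProb.Lemma.AprioriEst} lets me iterate the construction to cover the full interval $\ccinterval{0}{T}$ by a standard continuation argument, the uniform bound preventing blow-up.

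For the qualitative properties I would argue as follows. Non-negativity is a consequence of the structural splitting emphasised in the remarks preceding the corollary: the operators $\calT^-_\Gamma\pair{s}{t}$ preserve the non-negative cone (they only transport and multiply by positive exponential factors), and $J_+$ maps non-negative states to non-negative states by inspection of \eqref{ApproxProb.CouplingOperator.Gain}. Hence if $g\geq0$ the iteration $\itopAux$ preserves non-negativity; starting the contraction iteration from a non-negative initial guess, the fixed point — being the uniform limit of non-negative functions — is itself non-negative. The $C^1$- and $L^1$-regularity claims follow from Lemma \ref{LinProb1.PropMildSol} applied with the fixed right-hand side $h\at{t}=\Gamma\at{t}J_+f\at{t}$: once $f$ is known to be regular, $J_+f$ inherits the same regularity (as $J_+$ is a finite linear combination with shifts in $n$ but no differentiation in $a$), so the Duhamel representation \eqref{LinProb1.MildSol} propagates regularity from the data to the solution.

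The main obstacle I anticipate is making the contraction estimate genuinely uniform in $n_0$, which is the whole point of working with $\bnorm{\cdot}$ rather than $\norm{\cdot}_\infty$. The relaxation rates $u_n$ grow linearly in $n$, so the factors $\exp(-u_nG(t))$ decay faster for large $n$; the supremum over $n$ in the contraction constant is therefore attained at the smallest rate $u_2=2\beta$, giving $1-\exp(-2\beta G(t))$, a quantity independent of $n_0$. I would need to confirm that the weight $\phi_n$ is exactly the one that diagonalises this balance — which is why Remark \ref{Rem.Def.Supersol} was set up with $J\phi=0$ — so that the per-component estimates assemble into the clean $\bnorm{\cdot}$ bound without any $n_0$-dependent constants creeping in. A secondary technical point is the continuation to time $T$: I must check that the local existence time depends only on $\bnorm{g}$ (through $G$) and not on $n_0$, so that finitely many continuation steps of uniform length suffice; the a priori bound $\bnorm{f\at{t}}\leq\bnorm{g}$ guarantees the restart data never grow, which is precisely what makes the uniform-in-$n_0$ continuation work and thereby justifies the concluding remark that all results extend to $n_0=\infty$.
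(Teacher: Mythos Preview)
Your overall strategy matches the paper's: Banach's contraction principle applied to $\itopAux$, with the Lipschitz constant read off from the estimate \eqref{AuxProb.Lemma.AprioriEst.Eqn2} (set $g\equiv0$), and non-negativity and regularity inherited because $\itopAux$ preserves both. So the architecture is fine.

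There is, however, a concrete monotonicity slip in your contraction analysis. You correctly note that the factors $\exp(-u_nG(t))$ decay faster for large $n$, but then conclude that $\sup_n\bigl(1-\exp(-u_nG(t))\bigr)$ is attained at the \emph{smallest} rate $u_2$. That is backwards: $x\mapsto 1-\exp(-x)$ is increasing, so the supremum is attained at the \emph{largest} $u_n$, of order $n_0$. The paper indeed records the Lipschitz constant as $1-\exp(-u_{n_0}G(t))$. This has two consequences for your write-up. First, your claimed uniformity of the contraction constant in $n_0$ is false; the constant does depend on $n_0$ (though this is irrelevant for the corollary, which is stated for fixed finite $n_0$). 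Second, and more to the point here, the short-time plus continuation argument you sketch is unnecessary: for any finite $n_0$ and any $T<\infty$ one has $u_{n_0}G(T)<\infty$, hence $1-\exp(-u_{n_0}G(T))<1$, and $\itopAux$ is a strict contraction on all of $\fspAux=C([0,T];\sspAux)$ in one shot. No localisation in time is needed, and the paper does not introduce one.

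Your treatment of non-negativity and regularity is in line with the paper's brief justification that $\itopAux$ respects both properties.
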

\begin{proof}
The existence and uniqueness of a mild solution is a direct consequence of Banach's
contraction principle and Lemma \ref{AuxProb.Lemma.AprioriEst} (the
Lipschitz constant of $\itopAux$ with respect to $\bnorm{\cdot{}}$
can be read off from \eqref{AuxProb.Lemma.AprioriEst.Eqn2} with $g\equiv0$, and is
given by $1-\exp\at{-u_{n_0}G\at{t}}$). Moreover, the remaining
assertions are satisfied as $\itopAux$ respects both the
non--negativity and $L^1$-regularity.
\end{proof}
%
%
\subsection{Solutions to the approximate system}
\label{sec:Proofs.LocalSol}
%
%
In this section we are going to establish local existence and uniqueness
results for the approximate problem. To this end we consider the
solution space $\fspApp=C\at{\ccinterval{0}{T};\sspApp} $, where the
state space given by
\begin{align*}
\sspApp=\sspAux\cap{}\{f=f_n\at{a}\;:\;\norm{f_n}_{0,\,1}<\infty\text{
for all }2\leq{n}\leq{n_0}\}.
\end{align*}
Of course, the set of all \emph{admissible} states is only a proper
subset of $\sspApp$ as any reasonable solution to the approximate
problem must be non-negative and area conserving, and has to
satisfy the boundary conditions and the polyhedral formula.
\begin{definition} %
\label{appsys.Regularity}%
The state $f\in\sspApp$ is called \emph{admissible}, if it is
non-negative with $A\at{f}>0$, and satisfies the polyhedral formula
$P\at{f}=0$ as well as the boundary conditions $f_n\at{0}=0$ for
$n\geq7$.
\end{definition}
%
\subsubsection*{Iteration scheme for the approximate system}
%
We construct solutions to the approximate system as fixed points to the following iteration operator
$\itopApp:\fspApp\to\fspApp$. For given $f\in\fspApp$ let $\tilde{f}=\itopApp{f}$ be the mild solution 
to
\begin{align}
\label{AuxProb.ItProb}
\partial_t\tilde{f}_n+\at{n-6}\partial_a\tilde{f}_n&=\abs{\Gamma\nat{f}}
\at{J{f}}_n
\end{align}
with initial and boundary conditions \eqref{ApproxProb.ICandBC}.
\begin{lemma}
\label{Existence.Local.Solution1}
For all initial data $g\in\sspApp$ there exist a time $T>0$ and
a constant ${C_0}$ (both depending on $n_0$ and $g$) such
that the operator $\itopApp$ has the following properties.
\begin{enumerate}
\item
For given $f\in{\calY_\aux}$ the function $\tilde{f}=\itopApp{f}\in\calY_\aux$
is a mild solution to \eqref{AuxProb.ItProb} that satisfies the initial and 
boundary conditions \eqref{ApproxProb.ICandBC}.
\item
For $C^1$-regular data $g$ and $f$ the function $\itopApp{f}$ is
$C^1$-regular.
\item 
The set
$\fspttApp=BC\bat{\ccinterval{0}{T};\,\sspttApp}$ with
\begin{align}
\label{Existence.Local.Solution1.Def}
\sspttApp&=\left\{f\in{\sspApp}\;:\;
\Gamma_D\at{f}\geq{C_0}^{-1},\quad
\norm{f}_\infty\leq{C_0}
\right\}.
\end{align}
is invariant under the action of $\itopApp$, and the operator $f\mapsto\abs{\Gamma\at{f}}Jf$ is Lipschitz-continuous on $\sspttApp$.
\end{enumerate}
\end{lemma}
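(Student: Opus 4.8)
The plan rests on the observation that, for a \emph{fixed} argument $f\in\fspApp$, the right hand side $h=\abs{\Gamma\nat{f}}\,Jf$ of \eqref{AuxProb.ItProb} is a prescribed datum, so that $\itopApp{f}$ is nothing but the mild solution of the linear Problem \ref{LinProb1} associated with $h$. Parts (1) and (2) then reduce to the transport theory of \S\ref{sec:Proofs.Transport}. For part (1) I would first verify that $h$ is a $C^0$- and $L^1$-regular right hand side: the operator $J$ has finite band width in $n$ (see \eqref{ApproxProb.CouplingOperator.Gain}--\eqref{ApproxProb.CouplingOperator.Loss}), whence $Jf$ inherits both regularities from $f$, while $t\mapsto\abs{\Gamma\nat{f\at{t}}}$ is a bounded continuous scalar factor, continuity of $\Gamma_N$ and $\Gamma_D$ along $f\in\fspApp$ being immediate from \eqref{ApproxProb.CouplingWeights}. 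The representation \eqref{LinProb1.MildSol} then defines $\itopApp{f}\in\fspApp$, and Lemma \ref{LinProb1.PropMildSol}(1) delivers the boundary conditions $\at{\itopApp{f}}_n\pair{0}{t}=0$ for $7\leq{n}\leq{n_0}$. For part (2) it suffices to note that, for $C^1$-regular $g$ and $f$, the datum $h$ is continuous in $\pair{a}{t}$ and $C^1$ in $a$; rewriting the transport equation as $\partial_t\at{\itopApp{f}}_n=h_n-\at{n-6}\partial_a\at{\itopApp{f}}_n$ shows that the time derivative is continuous as well, so that $\itopApp{f}$ is $C^1$-regular.

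The genuine content is part (3). I would fix $C_0$ of the size of $\max\{\norm{g}_\infty,\,2\,\Gamma_D\at{g}^{-1}\}$ (times a constant) and then choose $T$ small. The upper bound $\norm{\itopApp{f}}_\infty\leq{C_0}$ follows from the first estimate in \eqref{LinProb1.ExMildSol.Eqn1} once $\norm{h\at{s}}_\infty$ is controlled on $\sspttApp$: there $\abs{\Gamma\at{f}}=\abs{\Gamma_N\at{f}}/\Gamma_D\at{f}\leq{C}\norm{f}_\infty\,\Gamma_D\at{f}^{-1}\leq{C}C_0^2$ and $\norm{Jf}_\infty\leq{C}\norm{f}_\infty\leq{C}C_0$, with $C=C\at{n_0,\beta}$, so that $\norm{\itopApp{f}\at{t}}_\infty\leq\norm{g}_\infty+T\,C\,C_0^3\leq{C_0}$ for $T$ small. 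The lower bound $\Gamma_D\at{\itopApp{f}\at{t}}\geq{C_0}^{-1}$ is the delicate point. Here I would use that $\itopApp{f}\at{0}=g$, so $\Gamma_D\at{\itopApp{f}\at{0}}=\Gamma_D\at{g}$, which is strictly positive for admissible $g$ by \eqref{ApproxProb.BasicProps.Eqn3}; since $\Gamma_D$ is a combination of zeroth moments, Lemma \ref{LinProb1.Moments} shows $t\mapsto\Gamma_D\at{\itopApp{f}\at{t}}$ is $C^1$ with a derivative bounded uniformly over $f\in\fspttApp$ (the boundary terms by $\norm{\itopApp{f}}_\infty\leq{C_0}$, the source terms by the $L^1$-control inherent in $\sspApp$ together with the propagation of support). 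Choosing $C_0\geq2\,\Gamma_D\at{g}^{-1}$ and $T$ small enough that the deviation from $\Gamma_D\at{g}$ stays below $\tfrac12\Gamma_D\at{g}$ then yields $\Gamma_D\at{\itopApp{f}\at{t}}\geq\tfrac12\Gamma_D\at{g}\geq{C_0}^{-1}$.

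The Lipschitz estimate on $\sspttApp$ I would obtain from the splitting $\abs{\Gamma\at{f}}Jf-\abs{\Gamma\at{\hat{f}}}J\hat{f}=\abs{\Gamma\at{f}}\at{Jf-J\hat{f}}+\bat{\abs{\Gamma\at{f}}-\abs{\Gamma\at{\hat{f}}}}J\hat{f}$: the first term is handled by the bound on $\abs{\Gamma}$ and the linearity and boundedness of $J$, while for the second one uses $\bigabs{\abs{\Gamma\at{f}}-\abs{\Gamma\at{\hat{f}}}}\leq\abs{\Gamma\at{f}-\Gamma\at{\hat{f}}}$ together with the Lipschitz continuity of $\Gamma=\Gamma_N/\Gamma_D$, which holds because $\Gamma_N$ and $\Gamma_D$ are linear functionals and the denominator satisfies $\Gamma_D\geq{C_0}^{-1}$ on $\sspttApp$ (quotient rule). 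I expect the main obstacle to be precisely this lower bound on $\Gamma_D\at{\itopApp{f}}$: as $\Gamma_D$ sits in the denominator of $\Gamma$ it must be kept uniformly away from zero over all of $\ccinterval{0}{T}$, which is what forces the coordinated, $g$-dependent choice of $C_0$ and $T$ and ties the whole argument to the strict positivity of $\Gamma_D\at{g}$.
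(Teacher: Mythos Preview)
Your proposal is correct and follows essentially the same approach as the paper. The only tactical difference lies in how you secure the lower bound on $\Gamma_D\at{\itopApp{f}\at{t}}$: the paper exploits the linearity of $\Gamma_D$ together with the Duhamel representation to bound $\bigabs{\Gamma_D\nat{\tilde{f}\at{t}}-\Gamma_D\at{\calT\at{t}g}}\leq\tilde{C}C_0^3T$, and then picks $T$ small enough that $\Gamma_D\at{\calT\at{t}g}\geq2C_0^{-1}$ throughout; your variant, differentiating $t\mapsto\Gamma_D\at{\itopApp{f}\at{t}}$ via the moment identities of Lemma~\ref{LinProb1.Moments}, amounts to the same estimate. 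Both routes share the same mild imprecision you flagged---the source contributions involve $L^1$-quantities of $f$ that $\sspttApp$ as written does not bound uniformly---but this is harmless for the intended application, since the fixed-point iteration is launched from $g$ and the $L^1$-norms propagate.
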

\begin{proof} At first let ${C_0}$ and $T$ be
arbitrary but fixed, and suppose that $f\at{t}\in\sspttApp$ for all
$t\in\ccinterval{0}{T}$. For the remainder of this proof $\tilde{C}$
always denotes a constant depending only on $n_0$ and $\beta$, but the value of $\tilde{C}$ may
change from line to line. From \eqref{AuxProb.ItProb} and \eqref{LinProb1.ExMildSol.Eqn1} we
conclude that
\begin{align*}
\norm{\tilde{f}\at{t}}_{\infty} \leq&
\norm{g}_{\infty}+\int\limits_0^t
\frac{{\abs{\Gamma_N\nat{f\at{s}}}}}{\Gamma_D\at{f\at{s}}}
\norm{Jf\at{s}}_{\infty}\,\dint{s} \leq
\norm{g}_{\infty}+\tilde{C}\int\limits_0^t
\frac{\norm{f\at{s}}_{\infty}^2}{\Gamma_D\at{f\at{s}}}
\,\dint{s},
\end{align*}
and due to $f\in\sspttApp$ we have
\begin{align}
\label{Existence.Local.Solution.EqnA}
\norm{\tilde{f}\at{t}}_{\infty}\leq\norm{g}_{\infty}+
\tilde{C}\,{C_0}^3\,T.
\end{align}
Moreover, since $\Gamma_D$ is linear in $f$ we find
\begin{align}
\label{Existence.Local.Solution.EqnB}
\abs{\Gamma_D\nat{\tilde{f}\at{t}}-\Gamma_D\at{\calT\at{t}g}}
&\leq%
\int\limits_0^t
\frac{{\abs{\Gamma_N\nat{f\at{s}}}}}{\Gamma_D\at{f\at{s}}}
\bigabs{\Gamma_D\at{\calT\at{t-s}Jf\at{s}}}\,\dint{s} 
\leq\tilde{C}\,{C_0}^3\,T.
\end{align}
In order to prove the invariance of $\sspttApp$ under the action of
$\itopApp_{\it}$, we choose ${C_0}$ sufficiently large such that
\begin{align*}
2\norm{g}_\infty<{C_0},\quad\Gamma_D\at{g}>3{C_0}^{-1},
\end{align*}
In addition, we choose $T$ sufficiently small with
\begin{align*}
\Gamma_D\at{\calT\at{t}g}\geq2{C_0}^{-1}\text{\;for\;all\;$0\leq{t}\leq{T}$},\qquad
\tilde{C}\,{C_0}^3\,T<\norm{g}_\infty,\qquad
\tilde{C}\,{C_0}^3\,T<C_0^{-1}.
\end{align*}
and thanks to \eqref{Existence.Local.Solution.EqnA} and \eqref{Existence.Local.Solution.EqnB} we thus find
$\tilde{f}\in\fspApp$. Finally, the claimed regularity results are provided by Lemma 
\ref{LinProb1.PropMildSol}, and the proof of the Lipschitz-continuity is straightforward.
\end{proof}
%
%
\subsubsection*{Local existence and uniqueness }
%
%
We prove the existence of \emph{admissible} solutions to the approximate system by combining the following arguments: $\at{1}$ Banach's contraction principle implies the existence of a unique fixed point $f$ for
$\itopApp$. $\at{2}$  Our results concerning the auxiliary problem provide the desired non-negativity.  $\at{3}$  The polyhedral formula is a consequence of the equation itself, and implies the conservation of area.

\begin{lemma}
\label{Existence.Local.Solution2}
Let $T$ and $C_0$ be as in Lemma \ref{Existence.Local.Solution1}, and suppose 
that the initial data $g$ are admissible. 
Then there exists a unique mild solution 
$f\in\fspApp$ to the approximate system on the time interval $\ccinterval{0}{T}$. Moreover, 
\begin{enumerate}
\item
all states $f\at{t}$ are admissible (in the sense of Definition
\ref{appsys.Regularity}) and fulfil $\bnorm{f\at{t}}\leq\bnorm{g}$,
\item
$f$ conserves the total area and the number of grains is non--increasing.
\end{enumerate}
\end{lemma}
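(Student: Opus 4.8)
The plan is to realise the solution as the unique fixed point of the iteration operator $\itopApp$ and then to read off the qualitative properties from the auxiliary problem and the moments identities. First I would fix $T$ and $C_0$ as in Lemma \ref{Existence.Local.Solution1}, decreasing $T$ further if necessary, and check that $\itopApp$ is a contraction on the invariant set $\fspttApp$. Since the initial and boundary data are the same for every iterate, the difference $\itopApp f-\itopApp\bar f$ solves Problem \ref{LinProb1} with vanishing initial data and right hand side $\abs{\Gamma(f)}Jf-\abs{\Gamma(\bar f)}J\bar f$; applying the continuity estimate \eqref{LinProb1.ExMildSol.Eqn1} together with the Lipschitz bound for $f\mapsto\abs{\Gamma(f)}Jf$ on $\sspttApp$ from Lemma \ref{Existence.Local.Solution1} yields $\norm{\itopApp f(t)-\itopApp\bar f(t)}\leq L\,T\sup_{s}\norm{f(s)-\bar f(s)}$, a contraction once $LT<1$. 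Banach's contraction principle then supplies a unique fixed point $f\in\fspttApp$, and by Lemma \ref{LinProb1.PropMildSol} this $f$ inherits $C^0$-regularity, so in particular the boundary conditions $f_n(0,t)=0$ for $n\geq7$ hold for all $t$.

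Next I would establish non-negativity by freezing the weight. Setting $\Gamma(t):=\abs{\Gamma(f(t))}\geq0$, the fixed point $f$ is exactly the mild solution to the auxiliary Problem \ref{AuxProb.Prob} with this prescribed weight, so Corollary \ref{AuxProb.Corr.Existence} gives both the non-negativity of $f$ and the bound $\bnorm{f(t)}\leq\bnorm{g}$ for all $t\in\ccinterval{0}{T}$. With $f\geq0$ in hand we have $\Gamma_N(f(t))=\sum_{n=2}^5(n-6)^2 f_n(0,t)\geq0$, while $\Gamma_D(f(t))\geq C_0^{-1}>0$ because $f(t)\in\sspttApp$; hence $\Gamma(f(t))=\Gamma_N(f(t))/\Gamma_D(f(t))\geq0$, so that $\abs{\Gamma(f)}=\Gamma(f)$ and the fixed point genuinely solves the approximate system \eqref{ApproxProb.EvolEqn}.

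It then remains to verify the polyhedral formula, conservation of area, and monotonicity of the grain number, for which I would invoke the moments identities of Lemma \ref{LinProb1.Moments} with right hand side $h=\Gamma(f)Jf$. By Remark \ref{AppSys.J.Props} one has $N(Jf)=A(Jf)=0$ and $P(Jf)=-\Gamma_D(f)$, so $N(h)=A(h)=0$, while the boundary production in the $P$-identity cancels against $P(h(s))=\Gamma(f(s))P(Jf(s))=-\Gamma(f(s))\Gamma_D(f(s))=-\Gamma_N(f(s))$; since $P(g)=0$ this forces $P(f(t))\equiv0$. Feeding $P(f)\equiv0$ and $A(h)=0$ into the $A$-identity gives $A(f(t))=A(g)$, and in particular $A(f(t))=A(g)>0$; feeding $N(h)=0$ into the $N$-identity leaves $\frac{\dint}{\dint t}N(f(t))=\sum_{n=2}^5(n-6)f_n(0,t)\leq0$ by non-negativity, so the grain number is non-increasing. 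Collecting non-negativity, $A(f(t))>0$, $P(f(t))=0$, and the boundary conditions shows that every state $f(t)$ is admissible.

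The main obstacle I anticipate is the self-referential structure created by the nonlocal weight $\Gamma(f)$: one cannot drop the absolute value in the iteration \eqref{AuxProb.ItProb} until non-negativity is known, yet non-negativity itself is obtained only after the fixed point has been constructed and reinterpreted as a solution of the auxiliary problem with a frozen weight. Making this circle rigorous hinges on keeping $f(t)$ inside $\sspttApp$ for the whole interval $\ccinterval{0}{T}$, so that $\Gamma_D(f(t))$ stays bounded below and $\Gamma(f(t))$ never blows up; this uniform control is precisely what the invariance in Lemma \ref{Existence.Local.Solution1} supplies, and it is what lets the contraction and the comparison argument be run in tandem.
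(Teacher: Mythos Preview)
Your proposal is correct and follows essentially the same route as the paper: construct the fixed point of $\itopApp$ via Banach's contraction principle on the invariant set $\fspttApp$, then freeze the weight $\abs{\Gamma(f(t))}$ and invoke Corollary \ref{AuxProb.Corr.Existence} for non-negativity and the $\bnorm{\cdot}$-bound, conclude $\Gamma(f)\geq0$, and finish with the moments identities of Lemma \ref{LinProb1.Moments} together with Remark \ref{AppSys.J.Props}. The only additional detail you supply beyond the paper's own argument is the explicit observation that $T$ may need to be shrunk to force $LT<1$, which is implicit in the paper's appeal to the contraction principle.
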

\begin{proof} Banach's contraction principle provides the existence of 
a fixed point $f\in\fspApp$ with $\itopApp{f}=f$. 
Then we consider the auxiliary problem with prescribed
coupling weight $\abs{\Gamma\at{f\at{t}}}$, and both the positivity and
boundedness of $f$ are consequences of Corollary
\ref{AuxProb.Corr.Existence}. In particular, we find $\Gamma\at{f\at{t}}\geq0$ and this shows that $f$
is indeed a solution to the approximate system. 
In order to prove the polyhedral formula and the conservation of area we use basically the same computations as in \S\ref{sec:model}. More precisely, Lemma \ref{LinProb1.Moments} implies
\begin{align*}
\frac{\dint}{\,\dint{t}}{P\nat{f\at{t}}}-
\sum\limits_{n=2}^{n_0}\at{n-6}^2{f}\pair{0}{t}=
\Gamma\nat{f\at{t}}
{P\at{Jf\at{t}}},
\end{align*}
and with \eqref{ApproxProb.CouplingWeights} and \eqref{ApproxProb.BasicProps.Eqn2} we infer that
\begin{align*}
\frac{\dint}{\,\dint{t}}{P\nat{f\at{t}}}=
\Gamma_N\nat{f\at{t}}-
\frac{\Gamma_N\nat{f\at{t}}}{\Gamma_D\nat{f\at{t}}}
\Gamma_D\nat{f\at{t}}=0.
\end{align*}
This implies $P\nat{f\at{t}}=0$ for all $t$ due to
$P\at{g}=0$. Similarly, we find
\begin{align*}
\frac{\dint}{\,\dint{t}}A\nat{f\at{t}}=P\nat{f\at{t}}=0.
\end{align*}
Finally, exploiting
Lemma \ref{LinProb1.Moments} once again, gives 
\begin{align*}
\frac{\dint}{\,\dint{t}}
N\nat{f\at{t}}=\sum\limits_{n=2}^{5}\at{n-6}
f_n\pair{0}{t}+
\Gamma\nat{f\at{t}}
N\at{Jf\at{t}}\leq0
\end{align*}
where we used $N\at{Jf}=0$ and $f\geq0$.
\end{proof}
\begin{remark}
\label{Existence.Local.Solution.Rem} The proofs of Lemma
\ref{Existence.Local.Solution1} and Lemma 
\ref{Existence.Local.Solution2}  imply that the local solution
does exist as long as $\Gamma_D\at{f\at{t}}$ can be bounded from
below by a positive constant $d_0$. In fact, for non-negative $f$ both $\norm{f\at{t}}_\infty$ and $\Gamma_N\at{f\at{t}}$ are bounded by the
super--solution $\overline{f}_n\pair{a}{t}=\bnorm{g}\phi_n$, and thus the constant $C_0$ from \eqref{Existence.Local.Solution1.Def} can be chosen to depend on $d_0$ and $g$ only.
Consequently, each lower estimate for
$\inf_{0\leq{t}\leq{T}}{\Gamma}_D\at{f\at{t}}$ will imply the
existence and uniqueness of solutions on the interval
$\ccinterval{0}{T}$. In what follows we derive several variants of
such estimates by bounding $N\at{f\at{t}}$ from below, respectively,
which is sufficient in view of \eqref{ApproxProb.BasicProps.Eqn3}.
\end{remark}
The first lower bound for the number of grains is an elementary consequence of the von
Neumann--Mullins law. The main observation is that grains
can grow only with speed $n_0-6$, so that for compactly supported
initial data the number of grains can be bounded from below by the total area. More precisely, 
if the initial data $g$ are compactly supported in
$\{a:0\leq{a}\leq{a_0}\}$ then we have
\begin{align}
\label{FirstEstimateForN}
N\at{f\at{t}}\geq\frac{A\at{g}}{a_0+t\at{n_0-6}}.
\end{align}
This bound is, however, not optimal because it holds only for compactly supported initial data and
depends strongly on $n_0$. For this reason we finally rely on refined
estimates to be derived in Lemma \ref{Positivity.Estimates}.
%

\subsection{Tightness estimates and global existence}
\label{sec:Proofs.Tightness}

We introduce the notion of `quasi--complement', that is the number
of grains outside a bounding frame. By enlarging this frame in time,
we show that no mass runs off at infinity in finite time, and as a
consequence we establish long-time existence for the approximate
system. Moreover, these tightness estimates also apply to the
original problem with $n_0=\infty$ provided that the initial data
decay sufficiently fast with respect to both $a$ and $n$.
\par%
Within this subsection we use the following  notations. Let $f$ be a
fixed non-negative solution to the approximate system
\eqref{ApproxProb}, defined for $0\leq{t}\leq{T}<\infty$, and set
\begin{align*}
\gamma=\log(1+{1}/{\beta}),\quad{}N\at{t}=N\at{f\at{t}},\quad\Gamma\at{t}=
\Gamma\at{f\at{t}},\quad
\overline{\Gamma}\at{t}=\sup_{0\leq{s}\leq{t}}\Gamma\at{s}.
\end{align*}
Notice that the definition of $\gamma$ implies
$\beta{n}\phi_n=\exp\at{-n\gamma}$ with
$\phi=\at{\phi_n}_{2\leq{n}\leq{n_0}}$ being the super--solution
from Remark \ref{Rem.Def.Supersol}.
\par
Given $\nu\in\Nset$ and $\alpha\in\cointerval{0}{\infty}$ we define
the \emph{first quasi--complement} by
\begin{align}
\notag
{N^{\bot}}\left(t,\alpha,\nu\right) =
\sum_{n=\nu+1}^{n_0}\int\limits_{0}^{\alpha}{f_{n}\left(a,t\right)}\,\dint{a}
+
\sum_{n=2}^{n_0}\int\limits_{\alpha}^{\infty}{f_{n}\left(a,t\right)}\,\dint{a}
\end{align}
as the non--essential part of $N\left( t \right)$. In order to
simplify the notation we allow for $\nu=0$ by setting $f_1\equiv0$
and $\at{Jf}_1=0$, so that $N\at{f\at{t}}=N^\bot\triple{t}{0}{0}$.
\par
In what follows we write
$ {N^{\bot}}\left(t,\alpha\right) :=
{N^{\bot}}\left(t,\alpha,\lfloor\alpha\rfloor\right) $
if the third argument of ${N^{\bot}}$ is given by rounding down the
second one, and refer to
\begin{align}
\label{Def.SecondQuasiCompl}
M^\bot{}\pair{t}{\alpha}=
\sum_{n=\lfloor{\alpha}\rfloor+1}^{n_0}n\int\limits_0^\infty\,f_n\pair{a}{t}\,\dint{a}+
\sum_{n=2}^{n_0}\int\limits_\alpha^\infty\,af_n\pair{a}{t}\,\dint{a}
\end{align}
as the \emph{second quasi--complement}. 
\begin{remark}
\label{Rem.Def.SecondQuasiCompl}
For all $t\geq{0}$ and $\alpha\geq0$ we have
$N^\bot{}\pair{t}{\alpha}\leq{}M^\bot{}\pair{t}{\alpha}$.
\end{remark}
\begin{proof} 
The desired estimate follows immediately from the definitions provided that $\alpha\geq1$. For
$0\leq\alpha<1$ we find
\begin{align*}
N^\bot{}\pair{t}{\alpha}=N\at{t}\leq\sum_{n=2}^{n_0}n\int\limits_0^\infty\,f_n\pair{a}{t}\,\dint{a}\leq
M^\bot{}\pair{t}{\alpha}
\end{align*}
due to $\lfloor{\alpha}\rfloor=0$.
\end{proof}
%
\subsubsection*{Estimates for $N^\bot$}
%
%
In order to derive suitable estimates for the first quasi--complement
${N^{\bot}}$ we choose $\nu=\lfloor\mu\rfloor$ and $\alpha$ as
functions of time. More precisely, $\alpha$ should grow at least
linearly in $\nu$ to compensate for the transport in $a$ along
characteristic lines, and $\nu$ should grow exponentially to control
the diffusion in $n$.
\begin{lemma}
\label{ddt Nc}%
Let $\mu,\alpha:\ccinterval{0}{T}\to\cinterval{0}{\infty}$ be two smooth
functions with 
\begin{align*}
\dot{\mu}\at{t}\geq0\quad\text{and}\quad
\dot{\alpha}\at{t}\ge\max\{\lfloor\mu\at{t}\rfloor-6,0\}. 
\end{align*}
Then, we have
\begin{align}
\label{ddt Nc eq}
{N^{\bot}}\left(t,\alpha\left(t\right),\lfloor\mu\left(t\right)\rfloor\right)
\le
{N^{\bot}}\left(0,\alpha\left(0\right),\lfloor\mu\left(0\right)\rfloor\right)
+ \bnorm{g}\,\overline{\Gamma}\at{t}
\int\limits_{0}^{t}\exp\left(-\gamma\lfloor\mu\left(s\right)\rfloor\right)\alpha\left(s\right)\,\dint{s}
\end{align}
for all $t\in\ccinterval{0}{T}$.
\end{lemma}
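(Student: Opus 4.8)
The plan is to differentiate $N^\bot\triple{t}{\alpha\at{t}}{\nu\at{t}}$ in time, where $\nu\at{t}\deq\lfloor\mu\at{t}\rfloor$, and to isolate from the evolution equation three competing mechanisms: transport in $a$ along characteristics, motion of the cut--off frame $\alpha\at{t}$ together with the threshold $\nu\at{t}$, and the collision operator. I would show that the first two can only decrease $N^\bot$ under the stated hypotheses, while the collision term is controlled by the super--solution. Since $\dot\mu\geq0$, the integer threshold $\nu\at{t}$ is a non--decreasing step function, so it jumps only upward; whenever it jumps by one, $N^\bot$ jumps downward by $\int_0^{\alpha}f_{\nu+1}\pair{a}{t}\,\dint{a}\geq0$ because $f\geq0$. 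Hence it suffices to bound $\frac{\dint}{\dint{t}}N^\bot$ on the open intervals where $\nu$ is constant and then to integrate, the jumps only improving the inequality. Throughout I argue for classical solutions and transfer to mild solutions by the approximation already used in Lemma \ref{LinProb1.Moments}; this is where the regularity and decay assumptions on $g$ enter, ensuring $f_n\pair{\cdot}{t}$ is continuous with $f_n\pair{a}{t}\to0$ as $a\to\infty$.

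On an interval of constant $\nu$ I write $\frac{\dint}{\dint{t}}N^\bot=\partial_tN^\bot+\dot\alpha\,\partial_\alpha N^\bot$. Substituting $\partial_tf_n=-\at{n-6}\partial_af_n+\Gamma\at{t}\nat{Jf}_n$ and integrating by parts in $a$, the transport part reduces to boundary terms at $a=0$ and $a=\alpha$ (the term at $a=\infty$ vanishing by decay), namely $\sum_{n=\nu+1}^{n_0}\at{n-6}f_n\pair{0}{t}$ and $\sum_{n=2}^{\nu}\at{n-6}f_n\pair{\alpha}{t}$, while $\partial_\alpha N^\bot=-\sum_{n=2}^{\nu}f_n\pair{\alpha}{t}$. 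The $a=0$ term is non--positive: for $n\geq7$ it vanishes by the boundary condition $f_n\pair{0}{t}=0$, and for $2\leq{n}\leq6$ the prefactor $n-6$ is non--positive while $f_n\geq0$. Combining the $a=\alpha$ transport term with the frame term yields $\sum_{n=2}^{\nu}\bat{\at{n-6}-\dot\alpha}f_n\pair{\alpha}{t}$, which is non--positive because the hypothesis $\dot\alpha\geq\max\{\nu-6,0\}\geq\nu-6\geq{n-6}$ for every $n\leq\nu$ makes each bracket non--positive.

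It remains to estimate the collision contribution $\Gamma\at{t}\Bat{\sum_{n=\nu+1}^{n_0}\int_0^{\alpha}\nat{Jf}_n\,\dint{a}+\sum_{n=2}^{n_0}\int_\alpha^\infty\nat{Jf}_n\,\dint{a}}$. Here I would use the discrete--flux form $\nat{Jf}_n=j_{n+1}-j_n$ with $j_n\deq\at{\beta+1}nf_n-\beta\at{n-1}f_{n-1}$ and the convention $j_2\deq0\deq{}j_{n_0+1}$, which reproduces \eqref{ApproxProb.CouplingOperator.Gain}--\eqref{ApproxProb.CouplingOperator.Loss} at the boundaries $n=2,n_0$. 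Telescoping gives $\sum_{n=2}^{n_0}\nat{Jf}_n=0$, so the integral over $\{a>\alpha\}$ vanishes by the zero balance property, while $\sum_{n=\nu+1}^{n_0}\nat{Jf}_n=-j_{\nu+1}$. Thus the collision contribution equals $\Gamma\at{t}\int_0^{\alpha}\bat{\beta\nu f_\nu-\at{\beta+1}\at{\nu+1}f_{\nu+1}}\pair{a}{t}\,\dint{a}$, which, using $f\geq0$, $\Gamma\geq0$, and $\norm{f_\nu}_\infty\leq\bnorm{f\at{t}}\phi_\nu\leq\bnorm{g}\phi_\nu$ from Lemma \ref{Existence.Local.Solution2}, is at most $\Gamma\at{t}\,\beta\nu\phi_\nu\,\bnorm{g}\,\alpha$. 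Since $\beta\nu\phi_\nu=\exp\at{-\nu\gamma}$, the derivative obeys $\frac{\dint}{\dint{t}}N^\bot\leq\Gamma\at{t}\,\bnorm{g}\exp\at{-\gamma\nu}\,\alpha$ on each such interval (with the convention $f_1\equiv0$ this remains valid for $\nu\leq1$).

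Integrating this bound over $\ccinterval{0}{t}$, adding the non--positive contributions from the upward jumps of $\nu$, and using $\Gamma\at{s}\leq\overline{\Gamma}\at{t}$ for $s\leq{t}$ to pull the monotone factor out of the time integral gives exactly \eqref{ddt Nc eq}. I expect the main obstacle to lie not in the algebra but in the bookkeeping at the discontinuities of $\lfloor\mu\rfloor$: one must justify that $t\mapsto N^\bot\triple{t}{\alpha\at{t}}{\lfloor\mu\at{t}\rfloor}$ is differentiable off a discrete set with jumps of the correct (non--positive) sign, and couple this with the approximation step that carries the classical computation over to mild solutions.
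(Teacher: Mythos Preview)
Your argument is correct and follows essentially the same route as the paper's proof: differentiate $N^\bot$ on intervals where $\nu=\lfloor\mu\rfloor$ is constant, use the evolution equation to show that the transport and frame contributions are non-positive under the hypotheses while the collision term is bounded via the telescoping identity $\sum_{n=\nu+1}^{n_0}\at{Jf}_n=\beta\nu f_\nu-\at{\beta+1}\at{\nu+1}f_{\nu+1}$ and the super--solution estimate $\norm{f_\nu}_\infty\leq\bnorm{g}\phi_\nu$, then observe that the upward jumps of $\nu$ only decrease $N^\bot$. Your handling of the boundary contribution at $a=0$ is in fact more explicit than the paper's, which drops this (non--positive) term without comment.
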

%
%
\begin{proof}
We define
$ \nu\left(t\right)=\lfloor\mu\left(t\right)\rfloor $
as the integer part of $\mu\left(t\right)$ and denote the jump of
$\nu\left(t\right)$ by
${[\![}\nu{]\!]}\at{t}=\nu\at{t+}-\nu\at{t-}$. Obviously, we can
restrict ourselves to $\nu\at{t}\leq{n_0}$, and for simplicity we
consider classical solutions. Our results then can be generalised to
mild solutions by approximation arguments. At first we study the
case ${[\![}\nu{]\!]}\at{t}=0$. Differentiating
${N^{\bot}}\left(t,\alpha\left(t\right),\nu\left(t\right)\right)$
and using the evolution equation \eqref{ApproxProb.EvolEqn} yields
\begin{align}
\label{eq ddt Nct}
\begin{split}
\frac{\dint{}}{\dint{t}}{N^{\bot}}\left(t,\alpha,\nu\right) &=
-\sum_{n=\nu+1}^{n_0}\left(n-6\right) f_{n}\left(\alpha,t\right) +
\Gamma\at{t}
\sum_{n=\nu+1}^{n_0}\int\limits_{0}^{\alpha}{\left(J f\right)_{n}\left(a,t\right)}\,\dint{a}\\
&\,\quad +
\sum_{n=\nu+1}^{n_0}\dot{\alpha}f_{n}\left(\alpha,t\right) -
\sum_{n=2}^{n_0}\int\limits_{\alpha}^{\infty}\left(n-6\right)\partial_{a}{f_{n}\left(a,t\right)}\,\dint{a}\\
&\,\quad + \Gamma\at{t}
\int\limits_{\alpha}^{\infty}\sum_{n=2}^{n_0}{\left(J
f\right)_{n}\left(a,t\right)}\,\dint{a} -
\sum_{n=2}^{n_0}\dot{\alpha}f_{n}\left(\alpha,t\right),
\end{split}
\end{align}
where $\alpha$ and $\nu$ are shorthand for $\alpha\at{t}$ and
$\nu\at{t}$, and due to \eqref{ApproxProb.BasicProps.Eqn1} this
implies
\begin{align*}
\frac{\dint{}}{\dint{t}}{N^{\bot}}\left(t,\alpha,\nu\right) &=
\sum_{n=2}^{\nu}\left(n-6-\dot{\alpha}\right)
f_{n}\left(\alpha,t\right) + \Gamma\at{t}
\sum_{n=\nu+1}^{n_0}\int\limits_{0}^{\alpha}{\left(J
f\right)_{n}\left(a,t\right)}\,\dint{a}
\\&\leq%
\sum_{n=\nu+1}^{n_0}\Gamma\at{t}\int\limits_{0}^{\alpha}{\left(J
f\right)_{n}\left(a,t\right)}\,\dint{a}
\end{align*}
For $\nu=0$ or $\nu=1$ we find
$\frac{\dint{}}{\dint{t}}{N^{\bot}}\pair{t}{\alpha}\leq{0}$ due to
$\sum_{n=2}^{n_0}\at{Jf}_n=0$, while for $\nu>2$ the
identities \eqref{ApproxProb.CouplingOperator.Gain} and
\eqref{ApproxProb.CouplingOperator.Loss} give
\begin{align*}
\sum_{n=\nu+1}^{n_0}{\left(J f\right)_{n}\left(a,t\right)} &=
\beta\nu{f_{\nu}\left(a,t\right)} -
\left(\beta+1\right)\left(\nu+1\right){f_{\nu+1}\left(a,t\right)}
\\&\leq%
\beta\nu{f_{\nu}\left(a,t\right)}\leq\bnorm{g}\beta\nu\phi_n=
\bnorm{g}\exp\at{-\gamma\nu},
\end{align*}
where we used that $\bnorm{f\at{t}}\leq\bnorm{g}$, see Corollary
\ref{AuxProb.Corr.Existence}. Therefore,
\begin{align}
\label{ddt Nct estimated}
\frac{\dint{}}{\dint{t}}{N^{\bot}}\left(t,\alpha\at{t},\nu\at{t}\right)
&\le \overline{\Gamma}\at{t}
\,\bnorm{g}\,\exp\left(-\gamma\nu\left(t\right)\right)\alpha\left(t\right)
\end{align}
holds for all $\nu$. In the case ${[\![}\nu{]\!]}=+1$ we find
\begin{align*}
{[\![}{N^{\bot}}\left(t,\alpha,\lfloor\mu\rfloor\right){]\!]} &=
{N^{\bot}}\left(t_{+},\alpha\at{t_{+}},\lfloor\mu\rfloor\at{t_{-}}+1\right)
-
{N^{\bot}}\left(t_{-},\alpha\at{t_{-}},\lfloor\mu\rfloor\at{t_{-}}\right)
\\&= -
\int\limits_{0}^{\alpha\at{t}}f_{\lfloor\mu\left(t_{-}\right)\rfloor}\left(a,t\right)\,\dint{a}<0
\end{align*}
as an additional part in the r.h.s. of \eqref{eq ddt Nct}, so
\eqref{ddt Nct estimated} still holds in a distributional sense.
Finally, \eqref{ddt Nc eq} follows by integrating \eqref{ddt Nct
estimated}.
\end{proof}
Next we derive a decay result for $N^{\bot}\left(t,\alpha\right)$
with respect to the variable $\alpha$ which does not depend on $n_0$ but
only on the initial data $g$. These decay estimates turn out to be
the crucial ingredient  for both, the global existence proof for
solutions and the passage to the limit $n_0\to\infty$.
\begin{corollary}
\label{eq ddt Nct.Cor}%
The estimate

\begin{align*}
{N^{\bot}}\left(t,\alpha\right) &\leq
{N^{\bot}}\left(0,\alpha\exp\left(-t\right)\right) +
C\bnorm{g}\,\overline{\Gamma}\at{t}\exp\left(-\gamma\,\alpha\exp\left(-t\right)\right)
\end{align*}
holds for all $t\geq0$ and $\alpha\geq0$, where $C$ depends only on
$\beta$.
\end{corollary}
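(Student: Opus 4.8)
The plan is to obtain this as a corollary of Lemma \ref{ddt Nc} by inserting an exponentially growing frame. Fix the target value $\alpha\geq0$ and the final time $t$, and set, for $s\in\ccinterval{0}{t}$,
$\alpha(s)=\alpha\exp(s-t)$ and $\mu(s)=\alpha\exp(s-t)$, i.e. I would take $\mu$ and $\alpha$ to be the \emph{same} smooth function. Then $\dot\alpha(s)=\alpha(s)=\alpha\exp(s-t)$, so the hypotheses of Lemma \ref{ddt Nc} are met with room to spare: clearly $\dot\mu(s)\geq0$, and since $\lfloor\mu(s)\rfloor\leq\mu(s)=\alpha(s)=\dot\alpha(s)$ we get $\dot\alpha(s)\geq\lfloor\mu(s)\rfloor\geq\lfloor\mu(s)\rfloor-6$, together with $\dot\alpha(s)\geq0$, hence $\dot\alpha(s)\geq\max\{\lfloor\mu(s)\rfloor-6,0\}$.

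Next I would read off the two boundary terms produced by Lemma \ref{ddt Nc}. At $s=t$ one has $\alpha(t)=\alpha$ and $\lfloor\mu(t)\rfloor=\lfloor\alpha\rfloor$, so the left-hand side of the lemma is exactly ${N^{\bot}}(t,\alpha,\lfloor\alpha\rfloor)={N^{\bot}}(t,\alpha)$, using the convention that the third argument is the rounded-down second one. At $s=0$ one has $\alpha(0)=\alpha\exp(-t)$ and $\lfloor\mu(0)\rfloor=\lfloor\alpha\exp(-t)\rfloor$, so the first term on the right is ${N^{\bot}}(0,\alpha\exp(-t),\lfloor\alpha\exp(-t)\rfloor)={N^{\bot}}(0,\alpha\exp(-t))$, which matches the first term of the claimed bound precisely.

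It then remains to estimate the integral $\int_0^t\exp(-\gamma\lfloor\mu(s)\rfloor)\alpha(s)\,\dint{s}$. The key computation is the substitution $r=\alpha\exp(s-t)$, under which $\dint{r}=r\,\dint{s}$ and therefore $\exp(-\gamma\lfloor\mu(s)\rfloor)\alpha(s)\,\dint{s}=\exp(-\gamma\lfloor r\rfloor)\,\dint{r}$, so the integral becomes $\int_{\alpha\exp(-t)}^{\alpha}\exp(-\gamma\lfloor r\rfloor)\,\dint{r}$. Using the elementary bound $\lfloor r\rfloor\geq r-1$ and enlarging the upper limit to $+\infty$ gives $\int_{\alpha\exp(-t)}^{\alpha}\exp(-\gamma\lfloor r\rfloor)\,\dint{r}\leq e^{\gamma}\int_{\alpha\exp(-t)}^{\infty}\exp(-\gamma r)\,\dint{r}=(e^{\gamma}/\gamma)\exp(-\gamma\,\alpha\exp(-t))$. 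Multiplying by the surviving prefactor $\bnorm{g}\,\overline{\Gamma}(t)$ from Lemma \ref{ddt Nc} yields the stated bound with $C=e^{\gamma}/\gamma$. Since $\gamma=\log(1+1/\beta)$ we have $e^{\gamma}=1+1/\beta$, so $C=(1+1/\beta)/\log(1+1/\beta)$ depends only on $\beta$, as required.

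I do not anticipate a serious obstacle: the whole statement reduces to Lemma \ref{ddt Nc} once the exponentially growing frame is chosen, and the only point requiring a little care is confirming that this choice respects the growth constraint $\dot\alpha\geq\lfloor\mu\rfloor-6$, which the crude bound $\lfloor\mu\rfloor\leq\alpha=\dot\alpha$ settles immediately. The substitution and the floor estimate $\lfloor r\rfloor\geq r-1$ are routine, and the edge case $\alpha=0$ (where the substitution degenerates) is trivial because the integrand then vanishes identically.
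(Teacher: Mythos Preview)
Your proof is correct and follows essentially the same approach as the paper: both choose the exponentially growing frame $\alpha(s)=\mu(s)=\alpha_0\exp(s)$ (you simply fix $\alpha_0=\alpha\exp(-t)$ from the outset rather than at the end), apply Lemma~\ref{ddt Nc}, and then use the substitution $r=\alpha(s)$ together with $\lfloor r\rfloor\geq r-1$ to bound the integral, arriving at the identical constant $C=e^{\gamma}/\gamma$.
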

\begin{proof}
Let $\alpha_0\geq0$ be arbitrary and consider
$\alpha\left(t\right)=\mu\left(t\right)=\alpha_0\exp\left(t\right)$,
and let $\nu\left(t\right)=\lfloor\mu\left(t\right)\rfloor$. Then,
$\dot{\alpha}\at{t}=\mu\at{t}\geq{\nu\at{t}}-6$, and Lemma \ref{ddt
Nc} implies
\begin{equation*}
{N^{\bot}}\left(t,\alpha_0\exp\left(t\right)\right) \leq
{N^{\bot}}\left(0,\alpha_0\right) +
\bnorm{g}\,\overline{\Gamma}\at{t}\,\exp\at{\gamma}\,
\int\limits_{0}^{t}\exp\left(-\gamma\mu\left(s\right)\right)\alpha\left(s\right)\,\dint{s},
\end{equation*}
where we used that
$\exp\left(-\gamma\lfloor\mu\left(s\right)\rfloor\right)
\leq%
\exp\at{-\gamma\mu\at{s}+\gamma}$.
Replacing $s$ by $a=\alpha\at{s}=\alpha_0\exp\at{s}$ we find
\begin{align*}
{N^{\bot}}\left(t,\alpha_0\exp\left(t\right)\right) &\leq
{N^{\bot}}\left(0,\alpha_0\right) +
\bnorm{g}\,\overline{\Gamma}\at{t}\,\exp\at{\gamma}\,
\int\limits_{\alpha_0}^{\alpha_0\exp\at{t}}\exp\left(-\gamma\,a\right)\,\dint{a}
\\&\leq%
{N^{\bot}}\left(0,\alpha_0\right) +
\bnorm{g}\,\overline{\Gamma}\at{t}\,\frac{\exp\at{\gamma}}{\gamma}
\exp\left(-\gamma\,\alpha_0\right).
\end{align*}
Since this identity holds for arbitrary $t$ and $\alpha_0$, we can
choose $\alpha_0=\alpha\exp\at{-t}$, which gives the desired result.
\end{proof}
\begin{remark}
The proofs of Lemma \ref{ddt Nc} and Corollary \ref{eq ddt
Nct.Cor}, and hence all estimates derived  below, can be easily
generalised to the original problem with $n_0=\infty$, provided that
$\bnorm{g}<\infty$, i.e., if for $n\to\infty$ the term
$\norm{g_n}_\infty$ decays at least as fast as
$n^{-1}\exp\at{-\gamma{n}}$.
\end{remark}
%
\subsubsection*{Estimates for $M^\bot$}
%
%
Here we exploit the properties of the $N^\bot$ and
that the decay behaviour of $M^\bot$ with
respect to $\alpha$ is controlled by $t$ and the first
quasi--complement of the initial data.
\begin{lemma}
\label{Lemma.NoRunOff} %
We have
\begin{align*}
M^\bot{}\pair{t}{\alpha}
&\le{C\exp\at{t}}\at{1+\bnorm{g}+\bnorm{g}\,\overline{\Gamma}\at{t}}{\calN^\bot_0}\at{\alpha\exp\at{-t}}
\end{align*}
for all $t\geq{0}$, $\alpha\geq0$, where ${\calN^\bot_0}$ depends
only on the initial quasi--complement via
\begin{align}
\label{Lemma.NoRunOff.EqnA} %
{\calN^\bot_0}\at{\alpha} &=
\at{\alpha+1}\,\exp\bat{-\gamma{\alpha}}\;+\;\alpha\,{N^{\bot}}\left(0,\alpha\right)
\;+\;
\int\limits_{\alpha}^{\infty}{N^{\bot}}\left(0,a\right)\,\dint{a},
\end{align}
and $C$ depends only on $\beta$.
\end{lemma}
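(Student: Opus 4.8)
The plan is to reduce everything to the first quasi--complement, for which Corollary~\ref{eq ddt Nct.Cor} already supplies decay in the area variable, and to recover the weights $n$ and $a$ hidden in $M^\bot$ by summing/integrating unit increments. First I would split $M^\bot{}\pair{t}{\alpha}=M_1+M_2$ into the $n$--tail $M_1=\sum_{n>\lfloor\alpha\rfloor}n\int_0^\infty f_n\pair{a}{t}\,\dint{a}$ and the $a$--tail $M_2=\sum_{n\ge2}\int_\alpha^\infty a f_n\pair{a}{t}\,\dint{a}$. Writing the weight $n$ as $\lfloor\alpha\rfloor$ plus a sum of unit increments over $\lfloor\alpha\rfloor<j\le n$, and $a$ as $\alpha+\int_\alpha^a\dint{b}$ for $a>\alpha$, and then interchanging the order of summation and integration, one is left with partial masses of the form $\sum_{n>\lfloor\alpha\rfloor}\int_0^\infty f_n$, $\sum_{n\ge k}\int_0^\infty f_n$ and $\sum_{n\ge2}\int_b^\infty f_n$. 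Each of these is bounded by a value of the first quasi--complement: directly from the definitions one checks $\sum_{n>\lfloor\alpha\rfloor}\int_0^\infty f_n\pair{a}{t}\,\dint{a}\le{N^{\bot}}\pair{t}{\alpha}$, that $\sum_{n\ge k}\int_0^\infty f_n\le{N^{\bot}}\pair{t}{k-1}$ for integer $k$, and that $\sum_{n\ge2}\int_b^\infty f_n\le{N^{\bot}}\pair{t}{b}$. Collecting these (and using $\lfloor\alpha\rfloor\le\alpha$) yields the pointwise-in-time estimate
\[
M^\bot{}\pair{t}{\alpha}\le 2\alpha\,{N^{\bot}}\pair{t}{\alpha}+\sum_{i\ge\lfloor\alpha\rfloor}{N^{\bot}}\pair{t}{i}+\int_\alpha^\infty{N^{\bot}}\pair{t}{b}\,\dint{b}.
\]

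Next I would record that $\alpha\mapsto{N^{\bot}}\pair{t}{\alpha}$ is non--increasing: on each interval where $\lfloor\alpha\rfloor$ is constant its $\alpha$--derivative equals $-\sum_{n=2}^{\lfloor\alpha\rfloor}f_n\pair{\alpha}{t}\le0$, and at each integer it has a downward jump. This monotonicity turns the residual series into an integral, $\sum_{i\ge\lfloor\alpha\rfloor}{N^{\bot}}\pair{t}{i}\le\int_{\lfloor\alpha\rfloor-1}^\infty{N^{\bot}}\pair{t}{b}\,\dint{b}$, so that
\[
M^\bot{}\pair{t}{\alpha}\le 2\alpha\,{N^{\bot}}\pair{t}{\alpha}+2\int_{\lfloor\alpha\rfloor-1}^\infty{N^{\bot}}\pair{t}{b}\,\dint{b}.
\]
Into this I would insert Corollary~\ref{eq ddt Nct.Cor}, namely ${N^{\bot}}\pair{t}{b}\le{N^{\bot}}\pair{0}{b\exp(-t)}+C\bnorm{g}\,\overline{\Gamma}\at{t}\exp(-\gamma b\exp(-t))$, and substitute $a=b\exp(-t)$ in the integral; the Jacobian produces the global factor $\exp(t)$. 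With $\sigma=\alpha\exp(-t)$, the ``transported initial data'' contributions combine into $2\exp(t)\big(\sigma{N^{\bot}}\pair{0}{\sigma}+\int_\sigma^\infty{N^{\bot}}\pair{0}{a}\,\dint{a}\big)$, which are exactly two of the three terms of $\exp(t){\calN^\bot_0}(\sigma)$, while all the exponential source contributions are of the form $C\bnorm{g}\,\overline{\Gamma}\at{t}\exp(t)(\sigma+1)\exp(-\gamma\sigma)$, i.e. bounded by the first term of ${\calN^\bot_0}$.

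The main difficulty is that the substitution leaves the integral lower limit at $(\lfloor\alpha\rfloor-1)\exp(-t)$ rather than at $\sigma$, so a boundary layer $\int_{(\lfloor\alpha\rfloor-1)\exp(-t)}^{\sigma}{N^{\bot}}\pair{0}{a}\,\dint{a}$ of width at most $2\exp(-t)$ survives. Since ${N^{\bot}}\pair{0}{\cdot}$ is non--increasing it is largest precisely on this layer, so the piece cannot be absorbed into $\int_\sigma^\infty$. To close the estimate I would invoke the exponential decay of the initial data from Assumption~\ref{TheMainAssumption}: using $\norm{g_n}_\infty\le\bnorm{g}\phi_n$ together with $\beta n\phi_n=\exp(-\gamma n)$ for the topological tail, and the exponential $a$--decay of $g$ for the area tail, the initial quasi--complement obeys the pointwise bound ${N^{\bot}}\pair{0}{a}\le C\bnorm{g}(1+a)\exp(-\gamma a)$. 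Consequently the boundary layer is controlled by $C\bnorm{g}(1+\sigma)\exp(-\gamma\sigma)$, once more the first term of ${\calN^\bot_0}(\sigma)$.

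Finally I would simply tally the three families of contributions by their coefficients. The clean transported terms carry the factor $1$, the boundary--layer term carries $\bnorm{g}$, and the exponential source terms carry $\bnorm{g}\,\overline{\Gamma}\at{t}$; each is dominated by $\exp(t)\,{\calN^\bot_0}\at{\alpha\exp(-t)}$ with a constant depending only on $\beta$. Summing them gives exactly
\[
M^\bot{}\pair{t}{\alpha}\le C\exp\at{t}\at{1+\bnorm{g}+\bnorm{g}\,\overline{\Gamma}\at{t}}{\calN^\bot_0}\at{\alpha\exp\at{-t}},
\]
which is the assertion. I expect the bookkeeping of the order--one shift in the lower integration limit, and the verification that the decay hypotheses on $g$ furnish exactly the $e^{-\gamma a}$ rate needed to bury it in the $(\alpha+1)\exp(-\gamma\alpha)$ term, to be the only genuinely delicate point; the layer--cake reduction and the sum--to--integral comparison are routine once the monotonicity of ${N^{\bot}}\pair{t}{\cdot}$ is in hand.
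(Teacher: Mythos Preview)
Your treatment of the $a$--tail $M_2$ is essentially the paper's argument (the layer--cake identity is just the integration by parts \eqref{Lemma.NoRunOff.Eqn1} read backwards), and your use of Corollary~\ref{eq ddt Nct.Cor} together with the substitution $a=b\exp(-t)$ is exactly right there. The gap is in your handling of the $n$--tail $M_1$.

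When you convert $\sum_{i\ge\lfloor\alpha\rfloor}N^\bot(t,i)$ to an integral you inherit the lower limit $\lfloor\alpha\rfloor-1$, and after the substitution this leaves you needing to control $N^\bot(0,\sigma-\delta)$ for some $0\le\delta\le 2$. You propose to bound this by $C\bnorm{g}(1+\sigma)\exp(-\gamma\sigma)$, invoking ``the exponential $a$--decay of $g$''. But the lemma carries no such hypothesis: the whole purpose of the function $\calN_0^\bot$ is to record the initial $a$--tail behaviour \emph{without} assuming any particular rate. Indeed, $\sum_{n\ge2}\int_\alpha^\infty g_n\,\dint a$ is part of $N^\bot(0,\alpha)$, and $\bnorm{g}<\infty$ gives no $a$--decay for this piece at all. (Even the later ``rapidly decreasing'' assumption \eqref{Thightness.More.Eqn1} only provides a rate $d_0$, not $\gamma$.) Since $N^\bot(0,\cdot)$ is non--increasing, $N^\bot(0,\sigma-\delta)\ge N^\bot(0,\sigma)$, and there is no way to bound it by the terms of $\calN_0^\bot(\sigma)$, which involve $N^\bot(0,a)$ only for $a\ge\sigma$. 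The argument does not close as stated.

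The paper sidesteps this by treating $M_1$ differently: it splits $\int_0^\infty=\int_0^n+\int_n^\infty$, absorbs the second piece into the already--estimated $M_2$ (since $n>\lfloor\alpha\rfloor$ forces $n>\alpha$), and controls the first piece \emph{directly} via the super--solution bound $f_n\pair{a}{t}\le\bnorm{g}\phi_n=\bnorm{g}\exp(-\gamma n)/(n\beta)$ from Corollary~\ref{AuxProb.Corr.Existence}. This yields
\[
\sum_{n>\lfloor\alpha\rfloor}n\int_0^n f_n\,\dint a\;\le\;C\bnorm{g}\sum_{n>\lfloor\alpha\rfloor}n\exp(-\gamma n)\;\le\;C\bnorm{g}\,(\sigma+1)\exp(-\gamma\sigma)
\]
with no reference to $N^\bot(0,\cdot)$ at all---and this is precisely where the separate coefficient $\bnorm{g}$ in the factor $\bat{1+\bnorm{g}+\bnorm{g}\,\overline{\Gamma}\at{t}}$ originates. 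If you replace your layer--cake treatment of $M_1$ by this direct use of the super--solution, the rest of your argument goes through unchanged.
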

\begin{proof}
Notice that
\begin{equation}
\label{Lemma.NoRunOff.Eqn1} %
\sum_{n=2}^{n_0}\int\limits_{\alpha}^{\infty}a{f_{n}\left(a,t\right)}\,\dint{a}
=
\sum_{n=2}^{n_0}\int\limits_{\alpha}^{\infty}\int\limits_{a}^{\infty}f_{n}\left(s,t\right)\,\dint{s}\,\dint{a}
+\alpha\sum_{n=2}^{n_0}\int\limits_{\alpha}^{\infty}{f_{n}\left(a,t\right)}\,\dint{a}
\end{equation}
via an integration by parts. Corollary \ref{eq ddt Nct.Cor} yields
\begin{equation}
\label{Lemma.NoRunOff.Eqn2} %
\sum_{n=2}^{n_0}\int\limits_{\alpha}^{\infty}{f_{n}\left(a,t\right)}\,\dint{a}
\leq {N^{\bot}}\left(0,\alpha\exp\left(-t\right)\right)
+\,C\bnorm{g}\,\overline{\Gamma}\at{t}\,\exp\left(-\gamma\alpha\exp\left(-t\right)\right),
\end{equation}
and this bounds the second term on the r.h.s of \eqref{Lemma.NoRunOff.Eqn1}. Moreover, integrating \eqref{Lemma.NoRunOff.Eqn2} we find
\begin{equation}
\label{Lemma.NoRunOff.Eqn3} %
\begin{split}
\sum_{n=2}^{n_0}\int\limits_{\alpha}^{\infty}\int\limits_{a}^{\infty}f_{n}\left(s,t\right)\,\dint{s}\,\dint{a}
\le&+%
\exp\at{t}\int\limits_{\alpha\exp\at{-t}}^{\infty}{N^{\bot}}\left(0,a\right)\,\dint{a}
\\&+%
\frac{C\bnorm{g}\,\overline{\Gamma}\at{t}\exp\at{t}}{\gamma}%
\exp\at{-\gamma{\alpha}\exp\left(-t\right)},
\end{split}
\end{equation}
and combining \eqref{Lemma.NoRunOff.Eqn2} with \eqref{Lemma.NoRunOff.Eqn3} we conclude that
\begin{align*}
\sum_{n=2}^{n_0}\int\limits_{\alpha}^{\infty}a{f_{n}\left(a,t\right)}\,\dint{a}
&\le{C\exp\at{t}}\at{1+\bnorm{g}+\bnorm{g}\,\overline{\Gamma}\at{t}}{\calN^\bot_0}\at{\alpha\exp\at{-t}}.
\end{align*}
To derive the remaining estimate we split the integration with
respect to $a$ at $a=n$, and obtain
\begin{equation*}
\begin{split}
\sum_{n=\lfloor{\alpha}\rfloor+1}^{n_0}n\int\limits_{0}^{\infty}{f_{n}\left(a,t\right)}\,\dint{a}
&\leq
\sum_{n=\lfloor{\alpha}\rfloor+1}^{n_0}n\int\limits_{0}^{n}{f_{n}\left(a,t\right)}\,\dint{a}
+
\sum_{n=\lfloor{\alpha}\rfloor+1}^{n_0}\int\limits_{n}^{\infty}a{f_{n}\left(a,t\right)}\,\dint{a}
\\&\leq
\sum_{n=\lfloor{\alpha}\rfloor+1}^{n_0}n\int\limits_{0}^{n}{f_{n}\left(a,t\right)}\,\dint{a}
+
\sum_{n=2}^{n_0}\int\limits_{\alpha}^{\infty}a{f_{n}\left(a,t\right)}\,\dint{a}.
\end{split}
\end{equation*}
We have already bounded the second term on the r.h.s., while the
first one can be estimated by using
$f_n\pair{a}{t}\leq\bnorm{g}\phi_n=\bnorm{g}\exp\at{-\gamma{n}}/\at{n\beta}$.
This gives
\begin{align*}
\sum_{n=\lfloor{\alpha}\rfloor+1}^{n_0}n\int\limits_{0}^{n}{f_{n}\left(a,t\right)}\,\dint{a}
&\le%
\frac{\bnorm{g}}{\beta}\sum_{n=\lfloor{\alpha}\rfloor+1}^{n_0}n\exp\left(-\gamma n\right)
\leq%
{}C\,\bnorm{g}\int\limits_{\alpha}^\infty\,a\exp\at{-\gamma{a}}\,\dint{a}
\\&\leq%
{}C\,\bnorm{g}\int\limits_{\alpha\exp\at{-t}}^\infty\,a\exp\at{-\gamma{a}}\,\dint{a}
\leq%
C\,\bnorm{g}{\calN^\bot_0}\at{\alpha\exp\at{-t}}
\end{align*}
and the proof is complete.
\end{proof}
%
%
\subsubsection*{Positivity of numbers of grains and global existence}
%
%
The initial data $g$ are called \emph{rapidly decreasing} if there
exists two constants $d_0$ and $D_0$ such that
\begin{align}
\label{Thightness.More.Eqn1}%
M^\bot{}\pair{0}{\alpha}\leq\,D_0\exp\at{-d_0\,\alpha}<\infty
\end{align}
for all $\alpha\geq0$. Notice that \eqref{Thightness.More.Eqn1}
implies that both $N^\bot{}\pair{0}{\alpha}$ and
${\calN^\bot_0}\at{\alpha}$
decay exponentially with respect to
$\alpha$. This follows from Remark \ref{Rem.Def.SecondQuasiCompl} and Definition \eqref{Lemma.NoRunOff.EqnA}.
\begin{remark}
Suppose that the initial data decay exponentially with respect to both
$a$ and $n$, this means there constants $\tilde{d}_0$ and
$\tilde{D}_0$ such that
$g_n\at{a}\leq\tilde{D}_0\exp\nat{-\tilde{d}_0\at{a+n}}$. Then there
exists a suitable choice of $d_0$ and $D_0$ such that
\eqref{Thightness.More.Eqn1} is satisfied.
\end{remark}
For rapidly decreasing initial data we can estimate
$M^\bot\pair{t}{\alpha}$ for arbitrary $t$ and $\alpha$ by means of
Lemma \ref{Lemma.NoRunOff}.
\begin{corollary}
\label{Thightness.More}%
Suppose that the initial data $g$ are rapidly decreasing. Then, for
each $t\geq0$ there exists constants $d_t$ and $D_t$ such that
\begin{align*}
M^\bot\pair{t}{\alpha}\leq
D_t\at{1+\overline{\Gamma}\at{t}}\exp\at{-d_t\,\alpha}
\end{align*}
holds for all $\alpha\geq0$. These constants depend on $\beta$ and
$g$, but not on $\overline{\Gamma}\at{t}$ or $n_0$.
\end{corollary}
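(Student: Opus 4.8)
The plan is to combine the result of Lemma~\ref{Lemma.NoRunOff} with the definition of rapidly decreasing initial data in \eqref{Thightness.More.Eqn1}, showing that exponential decay of the initial data transforms into exponential decay of $M^\bot\pair{t}{\alpha}$ at each later time, with a rate $d_t$ that degrades in $t$ but remains strictly positive. First I would recall that Lemma~\ref{Lemma.NoRunOff} bounds $M^\bot\pair{t}{\alpha}$ by $C\exp\at{t}\at{1+\bnorm{g}+\bnorm{g}\,\overline{\Gamma}\at{t}}\,{\calN^\bot_0}\at{\alpha\exp\at{-t}}$, so the whole problem reduces to showing that the composite quantity ${\calN^\bot_0}\at{\alpha\exp\at{-t}}$ decays exponentially in $\alpha$ with an appropriate rate.

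The core estimate is therefore to bound ${\calN^\bot_0}\at{\beta}$ from \eqref{Lemma.NoRunOff.EqnA} when the initial data are rapidly decreasing. Each of its three terms must be controlled: the term $\at{\beta+1}\exp\bat{-\gamma\beta}$ decays at rate $\gamma$ by inspection; the term $\beta\,{N^{\bot}}\left(0,\beta\right)$ decays because ${N^{\bot}}\pair{0}{\beta}\leq{}M^\bot\pair{0}{\beta}\leq{}D_0\exp\at{-d_0\beta}$ by Remark~\ref{Rem.Def.SecondQuasiCompl} and \eqref{Thightness.More.Eqn1}, so that the extra polynomial factor $\beta$ is absorbed at the cost of slightly reducing the exponential rate; and the integral term $\int_\beta^\infty{N^{\bot}}\left(0,a\right)\,\dint{a}$ is likewise controlled by integrating the same exponential bound on $N^\bot$. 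I would collect these into a single estimate of the form ${\calN^\bot_0}\at{\beta}\leq{}C'\exp\at{-d'\beta}$ for some $d'\in\ointerval{0}{d_0}$ and $C'$ depending on $D_0$, $d_0$, $\gamma$, hence only on $\beta$ and $g$.

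Substituting $\beta=\alpha\exp\at{-t}$ then gives ${\calN^\bot_0}\at{\alpha\exp\at{-t}}\leq{}C'\exp\at{-d'\alpha\exp\at{-t}}$, and multiplying by the prefactor from Lemma~\ref{Lemma.NoRunOff} yields the claimed bound with $d_t\deq{}d'\exp\at{-t}$ and with the $t$- and $\bnorm{g}$-dependent constants absorbed into $D_t$. The crucial structural point, which I would emphasise, is that none of $C$, $d'$, $D_0$, $d_0$ or $\gamma$ depends on $n_0$, so the resulting $d_t$ and $D_t$ depend only on $\beta$ and $g$; in particular the factor $1+\overline{\Gamma}\at{t}$ can be pulled out exactly as stated, since the remaining constant is independent of $\overline{\Gamma}\at{t}$.

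I expect the main (though modest) obstacle to be the bookkeeping that ensures the final rate $d_t$ stays strictly positive and $n_0$-independent while the polynomial factors $\at{\beta+1}$, $\beta$ and those arising from the integral are absorbed into the exponential. The cleanest way to handle this is to fix any $d'<\min\{\gamma,d_0\}$ once and for all and use the elementary inequality $p(\beta)\exp\at{-d_0\beta}\leq{}C''\exp\at{-d'\beta}$ for polynomials $p$; the $\exp\at{t}$ growth from the transport prefactor is harmless because it multiplies a constant rather than affecting the decay rate in $\alpha$. No genuinely delicate analysis is required here: this corollary is essentially a consequence of feeding the rapid-decay hypothesis into the already-established tightness estimate of Lemma~\ref{Lemma.NoRunOff}.
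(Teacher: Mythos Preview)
Your proposal is correct and follows exactly the line of reasoning the paper intends: the paper gives no formal proof for this corollary but states just before it that \eqref{Thightness.More.Eqn1} together with Remark~\ref{Rem.Def.SecondQuasiCompl} forces ${\calN^\bot_0}$ to decay exponentially, after which Lemma~\ref{Lemma.NoRunOff} immediately yields the claim. One cosmetic point: you use $\beta$ as the name of your integration/placeholder variable, which clashes with the fixed model parameter $0<\beta<2$ used throughout the paper; rename it (e.g.\ to $\alpha_0$ or $r$) before inserting the argument.
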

Recall that the lower bound for $N\at{t}$ from \eqref{FirstEstimateForN}
is not optimal, as it is restricted to compactly supported initial data and depends on $n_0$.
Here we derive a better result by
exploiting the tightness estimates.
\begin{lemma}
\label{Positivity.Estimates}
For rapidly decreasing initial  data $g$ and each $t\geq0$ there
exists a constant $C_t>0$ such that
\begin{align*}
N\at{t}\geq{C_t},\quad\overline{\Gamma}\at{t}\leq1/C_t.
\end{align*}
In particular, this constant is
independent of $n_0$.
\end{lemma}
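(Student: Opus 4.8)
The plan is to reduce both claimed bounds to a single lower bound on the number of grains $N\at{t}$, and then to close the resulting self-referential estimate by exploiting that the tightness bound for $M^\bot$ depends only logarithmically on $\overline{\Gamma}$.

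\textbf{Step 1 (control of $\Gamma$ by $N$).} First I would bound the numerator of $\Gamma$. By the super--solution bound recorded in Remark \ref{Existence.Local.Solution.Rem} we have $f_n\pair{0}{t}\le\bnorm{g}\phi_n$, hence
\begin{align*}
\Gamma_N\at{f\at{t}}\le\bnorm{g}\sum_{n=2}^5\at{n-6}^2\phi_n=:c_1\bnorm{g}.
\end{align*}
For the denominator, non--negativity together with the polyhedral formula and \eqref{ApproxProb.BasicProps.Eqn3} give $\Gamma_D\at{f\at{t}}\ge\at{4-2\beta}N\at{t}$, where $4-2\beta>0$ thanks to $\beta<2$. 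Consequently
\begin{align*}
\Gamma\at{t}\le\frac{c_1\bnorm{g}}{\at{4-2\beta}N\at{t}},
\end{align*}
so an upper bound on $\Gamma$ follows at once from a lower bound on $N$.

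\textbf{Step 2 (lower bound on $N$ via conservation of area and tightness).} Since the area is conserved (Lemma \ref{Existence.Local.Solution2}), $A\at{t}=A\at{g}>0$. Splitting the area integral at an arbitrary threshold $\alpha>0$ and estimating $\sum_n\int_0^\alpha a f_n\,\dint{a}\le\alpha N\at{t}$ together with $\sum_n\int_\alpha^\infty a f_n\,\dint{a}\le M^\bot\pair{t}{\alpha}$ yields
\begin{align*}
A\at{g}\le\alpha N\at{t}+M^\bot\pair{t}{\alpha},\qquad\text{i.e.}\qquad N\at{t}\ge\frac{A\at{g}-M^\bot\pair{t}{\alpha}}{\alpha}.
\end{align*}
By Corollary \ref{Thightness.More}, $M^\bot\pair{t}{\alpha}\le D_t\at{1+\overline{\Gamma}\at{t}}\exp\at{-d_t\alpha}$ with $d_t,D_t$ depending only on $\beta$ and $g$. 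Choosing $\alpha=\alpha\at{t}$ so that $M^\bot\pair{t}{\alpha}\le A\at{g}/2$ --- explicitly $\alpha\at{t}\sim d_t^{-1}\log\at{2D_t\at{1+\overline{\Gamma}\at{t}}/A\at{g}}$ --- gives $N\at{t}\ge A\at{g}/\at{2\alpha\at{t}}$, which decays only like $1/\log\at{1+\overline{\Gamma}\at{t}}$ as $\overline{\Gamma}\at{t}$ grows.

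\textbf{Step 3 (closing the self--referential estimate).} Combining Steps 1 and 2, applied at each earlier time $s\le t$ and using the monotonicity $\overline{\Gamma}\at{s}\le\overline{\Gamma}\at{t}$ together with uniform bounds on $d_s,D_s$ over the compact interval $\ccinterval{0}{t}$, I would obtain $\Gamma\at{s}\le c_2\,\log\at{c_3\at{1+\overline{\Gamma}\at{t}}}$ for all $s\in\ccinterval{0}{t}$, with $c_2,c_3$ depending only on $\beta,g,t$. Taking the supremum over $s$ yields the closed inequality $\overline{\Gamma}\at{t}\le c_2\log\at{c_3\at{1+\overline{\Gamma}\at{t}}}$. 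Since the right--hand side grows sublinearly in $\overline{\Gamma}\at{t}$, this forces $\overline{\Gamma}\at{t}\le\Phi\at{t}<\infty$ for a finite constant $\Phi\at{t}$ depending on $\beta,g,t$. Feeding this back into Step 2 produces a positive lower bound $N\at{t}\ge C_t$, and shrinking $C_t$ if necessary also yields $\overline{\Gamma}\at{t}\le1/C_t$. All constants are independent of $n_0$ because both the tightness estimate and the super--solution bound are. The genuine difficulty is precisely this circularity: the only control on the large-$a$/large-$n$ tails, namely $M^\bot$, itself carries the factor $\overline{\Gamma}\at{t}$ that we wish to bound; the argument survives only because this dependence enters logarithmically, making the inequality $\overline{\Gamma}\lesssim\log\at{1+\overline{\Gamma}}$ self--improving. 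Care is needed to verify that $d_s,D_s$ from Corollary \ref{Thightness.More} stay uniformly bounded on $\ccinterval{0}{t}$, which legitimises passing to the supremum in $s$.
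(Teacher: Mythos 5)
Your proposal is correct and follows essentially the same route as the paper: bound $\Gamma_N$ by the super--solution and $\Gamma_D$ from below by $\at{4-2\beta}N$, combine conservation of area with the tightness estimate of Corollary \ref{Thightness.More}, and close the resulting self--referential inequality by exploiting that the dependence on the unknown quantity is only logarithmic. The only (cosmetic) difference is that the paper first substitutes $\overline{\Gamma}\at{t}\le c\bnorm{g}/N\at{t}$ (using that $N$ is non--increasing) and closes the inequality in $N$ via $N\at{1+\ln\at{1+N^{-1}}}\ge\mathrm{const}$, whereas you close it in $\overline{\Gamma}$ via $\overline{\Gamma}\lesssim\log\at{1+\overline{\Gamma}}$.
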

\begin{proof}
Within this proof let $C_t$ denote an arbitrary constant that
depends only on $t$, $\beta$, and $g$. According to Remark
\ref{AppSys.J.Props} and the monotonicity of $N\at{t}$ we have
\begin{align}
\label{Positivity.Estimates.Eqn1}
\ol{\Gamma}\at{t}=\sup\limits_{0\leq{s}\leq{t}}\Gamma\at{s}=
\sup\limits_{0\leq{s}\leq{t}}\frac{{c}\bnorm{g}}{N\at{s}}
\leq\frac{{c}\bnorm{g}}{N\at{t}}.
\end{align}
Moreover, the conservation of area implies
\begin{align*}
A\at{g}=A\at{f\at{t}}=%
\sum_{n=2}^{n_0}\Bat{\int\limits_0^{\alpha}\,a\,f_n\pair{a}{t}\,\dint{a}+
\int\limits_{\alpha}^\infty\,a\,f_n\pair{a}{t}\,\dint{a}}
\leq{}%
\alpha{N\at{t}}+M^\bot\pair{t}{\alpha}
\end{align*}
for all $\alpha\geq0$, so we infer from Corollary \ref{Thightness.More} that
\begin{align*}
\alpha{}N\at{t}\geq
\at{A\at{g}-D_t\at{1+\overline{\Gamma}\at{t}}\exp\at{-d_t\alpha}}
\geq%
\at{A\at{g}-C_t\at{1+N\at{t}^{-1}}\exp\at{-d_t\alpha}}.
\end{align*}
Now we choose $\alpha=\alpha_t$ such that
\begin{math}
N\at{t}\alpha_t=\tfrac{1}{2}A\at{g}=c_t,
\end{math}
that means
\begin{align*}
\alpha_t={C_t}\at{1+\ln\at{1+N\at{t}^{-1}}},
\end{align*}
and hence we find
\begin{align*}
{C_t}\at{1+\ln\at{1+N\at{t}^{-1}}}N\at{t}=1.
\end{align*}
This estimate yields the existence of a constant $C_t$ with
$N\at{t}>C_t$, and thanks to \eqref{Positivity.Estimates.Eqn1} this
provides also an corresponding upper bound for $\ol{\Gamma}\at{t}$.
\end{proof}
\begin{corollary}
\label{Positivity.LongTimeExist}
Suppose that the initial data $g$ are rapidly decreasing. Then the
solution from Lemma \ref{Existence.Local.Solution2} does
exist for all times $t\geq0$.
\end{corollary}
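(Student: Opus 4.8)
The plan is to run a continuation argument and use Lemma \ref{Positivity.Estimates} to exclude any finite-time breakdown. First I would invoke Lemma \ref{Existence.Local.Solution2} to obtain a unique admissible solution on some interval $\ccinterval{0}{T}$, and then introduce the maximal existence time
\[
T^\ast=\sup\{T>0\;:\;\text{an admissible solution to the approximate system exists on }\ccinterval{0}{T}\}.
\]
The goal is to show $T^\ast=\infty$, so I would argue by contradiction and assume $T^\ast<\infty$.

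On every subinterval $\ccinterval{0}{t}$ with $t<T^\ast$ the restriction of the solution is admissible, hence non-negative and area-conserving with $\bnorm{f\at{s}}\leq\bnorm{g}$, so Lemma \ref{Positivity.Estimates} applies and yields $N\at{s}\geq{}C_s>0$. The crucial point is that the constants entering that lemma, through Corollary \ref{Thightness.More} and \eqref{Positivity.Estimates.Eqn1}, depend on $s$ only through monotone factors of the type $\exp\at{s}$; consequently they stay bounded on the compact interval $\ccinterval{0}{T^\ast}$, and I can extract a uniform lower bound $\inf_{0\leq{s}<T^\ast}N\at{s}=:d>0$. By \eqref{ApproxProb.BasicProps.Eqn3} this immediately gives a uniform lower bound $\Gamma_D\at{f\at{s}}\geq\at{6-2\at{\beta+1}}d=:d_0>0$ for all $s<T^\ast$.

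It remains to convert this uniform coercivity of $\Gamma_D$ into an extension of the solution beyond $T^\ast$. Here I would appeal to Remark \ref{Existence.Local.Solution.Rem}: for the fixed value of $n_0$, the local existence time furnished by Lemma \ref{Existence.Local.Solution1} depends only on the lower bound $d_0$ for $\Gamma_D$ and on the state through $\bnorm{\cdot}$ and $A\at{\cdot}$, both of which are controlled uniformly in $s$ since $\bnorm{f\at{s}}\leq\bnorm{g}$ and $A\at{f\at{s}}=A\at{g}$. Thus there is a step size $\tau>0$, independent of the restart time, such that restarting the approximate problem from the admissible state $f\at{t_0}$ for any $t_0<T^\ast$ produces an admissible solution on $\ccinterval{t_0}{t_0+\tau}$. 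Choosing $t_0$ with $T^\ast-t_0<\tau$ extends the original solution past $T^\ast$, which contradicts the maximality of $T^\ast$; hence $T^\ast=\infty$ as claimed.

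The main obstacle I anticipate is the subtle circularity in using Lemma \ref{Positivity.Estimates}: that lemma presupposes a solution on $\ccinterval{0}{T}$ in order to derive the lower bound on $N$, so it cannot be applied directly to a not-yet-constructed solution on all of $\ccinterval{0}{T^\ast}$. The continuation scheme resolves this by invoking the estimate only on intervals where the solution is already known to exist, and then exploiting that the resulting bounds are uniform in the endpoint. Making this uniformity precise — in particular checking that the constants $d_t$, $D_t$ from Corollary \ref{Thightness.More} and the resulting $C_t$ stay bounded away from the degenerate regime as $t\uparrow{}T^\ast$ — is the one step that genuinely requires care rather than routine bookkeeping.
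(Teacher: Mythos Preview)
Your proposal is correct and follows essentially the same route as the paper: the paper's proof simply invokes Lemma \ref{Positivity.Estimates} for the a priori bounds on $N\at{t}$ and $\overline{\Gamma}\at{t}$ and then appeals to Remark \ref{Existence.Local.Solution.Rem} to conclude that the local solution extends indefinitely. What you have written is precisely the continuation argument that makes this two-line sketch rigorous, including the correct observation that the constants in Lemma \ref{Positivity.Estimates} depend on $t$ only through monotone factors and hence stay controlled as $t\uparrow T^\ast$.
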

\begin{proof}
Lemma \ref{Positivity.Estimates} provides a priori estimate for both
$N\at{t}$ and $\overline{\Gamma}\at{t}$ for arbitrary large $t$.
Therefore, the local solution to the approximate system exists for
all times, compare Remark \ref{Existence.Local.Solution.Rem}.
\end{proof}
%
%
%
%
\subsection{Passage to the limit $n_0\to\infty$}
\label{sec:Proofs.ApproxLimit}
%
In this section we consider a fixed final time $0<T<\infty$ and pass
to the limit $n_0\to\infty$. Consequently, from now on $\at{i}$ the
formulas for $Jf$ and $\Gamma\at{f}=\Gamma_N\at{f}/\Gamma_D\at{f}$
refer to \eqref{intro.DefJ} and \eqref{intro.DefGa}, the definitions
for the original problem with $n_0=\infty$, and $\at{ii}$ the number
of grains $N\at{f}$, the area $A\at{f}$, and the polyhedral defect
$P\at{f}$ are given by \eqref{intro.DefN}, \eqref{intro.DefA}, and
\eqref{intro.DefP}, respectively.
%
%
\subsubsection*{Existence of solutions}
%
The solution space $\fspInf$ for Problem
\ref{TheInfiniteProblem} is given $\fspInf=C\at{\cointerval{0}{T};\sspInf}$
for some $0<T<\infty$ with state space
\begin{align*}
\sspInf:=
\left\{f\in{}BC\at{\cointerval{0}{\infty};X}\;:\;\bnorm{f}+\lnorm{f}<\infty\right\},
\end{align*}
where $\xspInf\cong\Rset^{\{n\geq2\}}$ abbreviates the space of real-valued
series $\quadruple{x_2}{...}{x_n}{...}$. The norm $\bnorm{\cdot}$ is defined analogously to 
\eqref{Def.BNorm1} and \eqref{Def.BNorm2} (with $n_0=\infty$), and $\lnorm{\cdot}$ reads
\begin{align*}
\lnorm{f}:=\sum\limits_{n\geq2}\int\limits_{0}^\infty\at{n+a}\abs{f_n\at{a}}\,\dint{a}.
\end{align*}
In what follows we call a state $f\in{\sspInf}$
\emph{admissible}, if $f$ is non--negative with $A\at{f}>0$, and
satisfies the polyhedral formula $P\at{f}=0$ and the boundary
conditions $f_n\at{a}=0$ for $n>6$.
\bigpar%
Our strategy for constructing mild solutions to Problem
\ref{TheInfiniteProblem} is rather straightforward. For fixed
$0<T<\infty$ and given admissible initial data $g\in\sspInf$ we
consider a sequence of corresponding solutions to the approximate
problem with increasing parameter $n_0$, and aim to show the
existence of an reasonable limit in $\fspInf$. More precisely,
within this section we consider the functions
\begin{align*}
g^\upidx{n_0}\in\sspInf,\quad f^\upidx{n_0}\in\fspInf,\quad
\Gamma^\upidx{n_0}\in{}C\at{\ccinterval{0}{T}},\quad
N^\upidx{n_0}\in{}C\at{\ccinterval{0}{T}},\quad
M^{\bot,\,\upidx{n_0}}\in{}C\at{\ccinterval{0}{T}\times\ccinterval{0}{T}}.
\end{align*}
which are defined for $n_0>6$ as follows.
\begin{enumerate}
\item For all $a\geq0$ and $0\leq{t}\leq{T}$ we have
\begin{align*}
g^\upidx{n_0}_n\at{a}=\left\{%
\begin{array}{ll}%
g_n\at{a}&\text{for }2\leq{n}\leq{n_0},\\0&\text{for\;}n>n_0,
\end{array}
\right.\quad\quad
f^\upidx{n_0}_n\pair{a}{t}=\left\{%
\begin{array}{ll}%
f_n\pair{a}{t}&\text{for }2\leq{n}\leq{n_0},\\0&\text{for\;}n>n_0.
\end{array}
\right.
\end{align*}
\item
For each $n_0$ the component functions $f^\upidx{n_0}_2,...,f^\upidx{n_0}_{n_0}$
are the unique mild solution for the approximate Problem
\ref{ApproxProb} with initial data given by
$g^\upidx{n_0}_2,...,g^\upidx{n_0}_{n_0}$.
\item
$\Gamma^\upidx{n_0}$ and $N^\upidx{n_0}$ are the corresponding
coupling weight and number of grains, respectively.
\item
$M^{\bot,\,\upidx{n_0}}$ is the second quasi--complement from
\eqref{Def.SecondQuasiCompl}.
\end{enumerate}
Recall that Lemma \ref{Existence.Local.Solution1} combined
with Corollary \ref{Positivity.LongTimeExist} provide that all these
functions are well defined, and that each $\Gamma^\upidx{n_0}$ is
positive and bounded.
\bigpar
It is natural to suppose the initial data to be admissible, but we
need a bit more for our subsequent analysis. In order to establish
compactness in $\fspInf$ and to control the tail behaviour (w.r.t. to
$n$ and $a$) we must assume that the initial data are moreover
\emph{regular}.
\begin{definition}
\label{InfSys.Rem.Regularity}
A state $f\in\sspInf$ is called $\emph{regular}$ if
the following statements are satisfied.

\begin{enumerate}
\item $f$ is equi-continuous with respect to
$\bnorm{\cdot}$, i.e., for each $\eps>0$ there exists
$\ol{\delta}=\ol{\delta}\at{\eps}$ such that
\begin{align*}
\bnorm{\calS^+_{\delta}{f}-{f}}\leq\eps\quad\text{for
all}\quad0\leq\delta\leq\ol{\delta},
\end{align*}
where $\calS^+_{\delta}$ is the left-shift operator defined by
$\calS^+_{\delta}f\at{a}=f\nat{a-\delta}$ for $a\geq\delta$ and
$\calS^+_{\delta}f\at{a}=0$ else.
\item
$f$ is rapidly decreasing w.r.t $n$ and $a$, that means there exist
two constants $d$ and $D$ such that
\begin{align*}
M^\bot\pair{f}{\alpha}\leq{D}\exp\at{-d\alpha}
\end{align*}
for all $\alpha\geq0$, where
\begin{align*}
M^\bot\pair{f}{\alpha}:=\sum_{n\geq2}\int\limits_\alpha^\infty\,af_n\at{a}\,\dint{a}+
\sum_{n\geq\lfloor{\alpha}\rfloor+1}n\int\limits_0^\infty\,f_n\at{a}\,\dint{a}
\end{align*}
is defined in line with \eqref{Def.SecondQuasiCompl}.
\end{enumerate}
\end{definition}
We proceed with some remarks concerning the regularity of the
initial data $g$. $\at{i}$ The first condition is apparently
satisfied if $g$ is differentiable w.r.t $a$ with
$\bnorm{\partial_ag}<\infty$. $\at{ii}$ Concerning the second
condition recall that $\bnorm{g}<\infty$ already implies an
exponential decay w.r.t. $n$. $\at{iii}$ Compactly supported initial
data with $g_n\at{a}=0$ for large $n$ and $a$ are rapidly decreasing.
\begin{assumption}
From now on we suppose the initial data $g\in\sspInf$ to be
admissible and regular.
\end{assumption}
As a first implication we summarise some consequences of the
tightness estimates from \S\ref{sec:Proofs.Tightness}, more precisely of
Corollary \ref{Thightness.More} and Lemma
\ref{Positivity.Estimates}.
\begin{remark}
\label{InfSys.Rem.Estimates} %
There exist constants $d$ and $D$ which depend only on the initial
data $g$ and $T$ but not on $n_0$ such that for all
$t\in\ccinterval{0}{T}$ we have
\begin{align*}
N^\upidx{n_0}\at{t}\geq{d},\qquad\Gamma^\upidx{n_0}\at{t}\leq{D},\quad
M^{\bot,\,\upidx{n_0}}\pair{t}{\alpha}\leq D\exp\at{-d\,\alpha}.
\end{align*}
\end{remark}
Our second result provides
compactness in the space of bounded and continuous functions.
\begin{lemma}
\label{InfSys.Corr.Compactness} %
For each $n$ the set $\big\{f^\upidx{n_0}_n\;:\;n_0>{n}\big\}$ is
equi-continuous in
$BC\at{\ccinterval{0}{T}\times\cointerval{0}{\infty}}$.
\end{lemma}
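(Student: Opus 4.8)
The plan is to establish equi-continuity of $\big\{f^\upidx{n_0}_n : n_0 > n\big\}$ with respect to both the time variable $t$ and the space variable $a$, and then conclude by appealing to the uniform boundedness already furnished by the super--solution estimate $\norm{f^\upidx{n_0}_n}_\infty \leq \bnorm{g}\phi_n \leq C_\beta\bnorm{g}$. Since the $a$-equi-continuity for the data is hypothesised (the initial states are regular), the work lies in propagating it through the evolution and in controlling the time modulus of continuity uniformly in $n_0$.

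First I would exploit the explicit mild solution representation. Writing the equation in Duhamel form via \eqref{LinProb1.MildSol}, we have
\begin{align*}
f^\upidx{n_0}_n\at{t}=\calT_{n-6}\at{t}g_n+\int\limits_0^t\calT_{n-6}\at{t-s}\,\Gamma^\upidx{n_0}\at{s}\,\at{Jf^\upidx{n_0}\at{s}}_n\,\dint{s}.
\end{align*}
Because $\calT_{n-6}$ is just translation (possibly with truncation at the boundary for $n>6$), the shift operator $\calS^+_\delta$ commutes with it up to the boundary term, so the spatial modulus of continuity of $f^\upidx{n_0}_n\at{t}$ is controlled by that of $g_n$ together with an integral of the spatial modulus of the source term $\Gamma^\upidx{n_0}\at{s}\at{Jf^\upidx{n_0}\at{s}}_n$. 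Using Remark \ref{InfSys.Rem.Estimates} to bound $\Gamma^\upidx{n_0}\at{s}\leq D$ uniformly, and the fact that $\at{Jf}_n$ is a finite linear combination of $f_{n-1},f_n,f_{n+1}$ with coefficients growing only linearly in $n$, the spatial modulus of $\at{Jf^\upidx{n_0}\at{s}}_n$ is bounded in terms of the moduli of the three neighbouring components, each controlled by the super--solution bound. This yields $a$-equi-continuity uniform in $n_0$.

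For $t$-equi-continuity I would estimate $\norm{f^\upidx{n_0}_n\at{t_2}-f^\upidx{n_0}_n\at{t_1}}_\infty$ directly from the same representation. The difference splits into a transport term $\norm{\calT_{n-6}\at{t_2}g_n-\calT_{n-6}\at{t_1}g_n}_\infty$, which is bounded by $\abs{n-6}\abs{t_2-t_1}$ times the spatial modulus of $g_n$ (again using the regularity of $g$), and a Duhamel term bounded by $\int_{t_1}^{t_2}\norm{\Gamma^\upidx{n_0}\at{s}\at{Jf^\upidx{n_0}\at{s}}_n}_\infty\,\dint{s}\leq C\abs{t_2-t_1}$, where $C$ depends on $n$, $\beta$, $D$, and $\bnorm{g}$ but not on $n_0$. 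Combining the spatial and temporal estimates gives joint equi-continuity on $\ccinterval{0}{T}\times\cointerval{0}{\infty}$.

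The main obstacle I anticipate is \emph{uniformity in $n_0$}, not the continuity estimates themselves. The coupling operator $\at{Jf}_n$ ties the $n$-th component to its neighbours, so a naive bound would let constants proliferate across the infinite hierarchy; the resolution is that all three neighbouring components are simultaneously dominated by the single super--solution $\phi$ through $\bnorm{f^\upidx{n_0}\at{t}}\leq\bnorm{g}$, which is exactly why the $\bnorm{\cdot}$-norm was introduced and why Remark \ref{InfSys.Rem.Estimates} supplies an $n_0$-independent bound on $\Gamma^\upidx{n_0}$. A secondary subtlety is the boundary truncation in $\calT_{n-6}$ for $n>6$: one must check that the cutoff at $a=\at{n-6}t$ does not spoil the shift estimate, but since it only sets values to zero ahead of the characteristic and the boundary data vanish, it can only improve the modulus of continuity.
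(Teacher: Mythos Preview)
Your argument for the time modulus of continuity is essentially the paper's: split off the transport increment (controlled by the already-established $a$-equi-continuity) and bound the Duhamel remainder by $C_n\delta$ using the uniform bound on $\Gamma^\upidx{n_0}$ from Remark~\ref{InfSys.Rem.Estimates}. That part is fine.

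The gap is in your treatment of the $a$-direction. You write that the spatial modulus of $\at{Jf^\upidx{n_0}\at{s}}_n$ is bounded by the moduli of the three neighbouring components, ``each controlled by the super--solution bound.'' But the super--solution estimate $\bnorm{f^\upidx{n_0}\at{t}}\leq\bnorm{g}$ controls only $\norm{f_m^\upidx{n_0}}_\infty$, not its modulus of continuity. So your inequality for the modulus of $f_n^\upidx{n_0}$ feeds back into the moduli of $f_{n-1}^\upidx{n_0}$, $f_n^\upidx{n_0}$, $f_{n+1}^\upidx{n_0}$, and you have not closed the loop: component-by-component Gronwall cascades up the index and gives no uniform control.

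The paper closes this in one stroke by working at the level of the full state rather than componentwise. Since the auxiliary Problem~\ref{AuxProb.Prob} with prescribed weight $\Gamma^\upidx{n_0}$ is \emph{linear} in $f$ and invariant under the shift $\calS^+_\delta$, the difference $\calS^+_\delta f^\upidx{n_0}-f^\upidx{n_0}$ is itself a mild solution of that auxiliary problem with initial datum $\calS^+_\delta g^\upidx{n_0}-g^\upidx{n_0}$. Corollary~\ref{AuxProb.Corr.Existence} then gives directly
\[
\bnorm{\calS^+_\delta f^\upidx{n_0}\at{t}-f^\upidx{n_0}\at{t}}\leq\bnorm{\calS^+_\delta g^\upidx{n_0}-g^\upidx{n_0}}\leq\eps,
\]
uniformly in $n$, $n_0$, and $t$. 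This is exactly the place where the $\bnorm{\cdot}$-norm earns its keep: it lets the a~priori estimate for the auxiliary problem absorb the coupling $J$ across all indices simultaneously, avoiding the cascade your component-wise bound would produce. Replace your $a$-equi-continuity paragraph by this linearity-plus-shift-invariance argument and the proof goes through.
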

\begin{proof}
Within this proof let $\eps>0$ be fixed and suppose that $\delta$ is
sufficiently small. We start with the modulus of continuity in $a$
-direction as it can be estimated independently of $n$. Since the
auxiliary Problem \ref{AuxProb.Prob} is linear in $f$ and
invariant under shifts w.r.t $a$, we immediately obtain
\begin{align*}
\calS^+_{\delta}f^\upidx{n_0}\at{t}-f^\upidx{n_0}\at{t}=
\calS^+_{\delta}g^\upidx{n_0}-g^\upidx{n_0}+
\int\limits_0^t\Gamma^\upidx{n_0}\at{s}
\calT\at{t-s}J\at{\calS^+_{\delta}f^\upidx{n_0}\at{s}-f^\upidx{n_0}\at{s}}\,\dint{s},
\end{align*}
and Corollary \ref{AuxProb.Corr.Existence} provides
\begin{align}
\notag
\bnorm{\calS^+_{\delta}f^\upidx{n_0}\at{t}-f^\upidx{n_0}\at{t}}
\leq\bnorm{\calS^+_{\delta}g^\upidx{n_0}-g^\upidx{n_0}}\leq{\eps}.
\end{align}
Therefore, all function $f^\upidx{n_0}_n\at{t}$ are equi-continuous
in $a$-direction (uniformly in $n$, $n_0$ and $t$). Now let $n$ be
fixed and consider $n_0>n$. The first part of this proof implies
\begin{align*}
\norm{\bat{\calT\nat{\delta}f^\upidx{n_0}\at{t}}_n-f^\upidx{n_0}_n\at{t}}_\infty\leq\eps
\end{align*}
for all $0\leq\delta\leq\ol{\delta}$. Note that here $\ol{\delta}$
is expected to depend also on $n$ as the transport velocity
increases with $n$. Moreover, by construction we have
\begin{align*}
f^\upidx{n_0}\nat{t+\delta}=\calT\nat{\delta}f^\upidx{n_0}\at{t}+
\int\limits_0^{\delta}
\Gamma^\upidx{n_0}\at{t+s}\calT\nat{\delta-s}Jf^\upidx{n_0}\at{t+s}\,\dint{s},
\end{align*}
and since $\Gamma^\upidx{n_0}\at{t}$ is uniformly bounded, see
Remark \ref{InfSys.Rem.Estimates}, we infer that
\begin{align*}
\norm{f^\upidx{n_0}_n\nat{t+\delta}-\calT\nat{\delta}f^\upidx{n_0}_n\at{t}}_\infty\leq
C\,\int\limits_0^{\delta}\norm{\bat{Jf^\upidx{n_0}\at{t+s}}_n}_\infty\,\dint{s}\leq{C_n}\delta,
\end{align*}
with constant $C_n$ depending on $n$. Finally, we have shown that
\begin{align*}
\norm{f^\upidx{n_0}_n\nat{t+\delta}-f^\upidx{n_0}_n\nat{t}}_\infty\leq\eps
\end{align*}
for all sufficiently small $\delta$, and the proof is complete.
\end{proof}
We are now able to prove our main result, which was already stated
in \S\ref{sec:model}, Theorem \ref{TheMainTheorem}.
\begin{lemma}
\label{InfSys.Lemma.Existence} There exists a mild solution
$f\in\fspInf$ to Problem \ref{TheInfiniteProblem}, which has the
following properties.
\begin{enumerate}
\item
$f$ is non--negative with $\bnorm{f\at{t}}\leq\bnorm{g}$
\item
all states $f\at{t}$ are admissible, so that $P\at{f\at{t}}=0$,
\item all states are regular in the sense of Definition
\ref{InfSys.Rem.Regularity},
\item
$f$ conserves the area with non--increasing number of grains.
\end{enumerate}
\end{lemma}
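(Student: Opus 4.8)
The plan is to assemble the lemma from the pieces already established for the approximate system, using the $n_0\to\infty$ limit together with Arzel\`a--Ascoli and the tightness estimates. First I would invoke Lemma \ref{InfSys.Corr.Compactness}: for each fixed $n$ the family $\big\{f^\upidx{n_0}_n\big\}_{n_0>n}$ is equi-continuous and, by Corollary \ref{AuxProb.Corr.Existence}, uniformly bounded (indeed $\bnorm{f^\upidx{n_0}\at{t}}\leq\bnorm{g}$). Arzel\`a--Ascoli then yields a subsequence along which $f^\upidx{n_0}_n$ converges locally uniformly on $\ccinterval{0}{T}\times\cointerval{0}{\infty}$ to some limit $f_n$; a diagonal argument over $n$ produces a single subsequence (not relabelled) for which $f^\upidx{n_0}_n\to f_n$ for every $n$. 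The limit $f$ inherits non-negativity, the bound $\bnorm{f\at{t}}\leq\bnorm{g}$, and the boundary conditions $f_n\at{0}=0$ for $n>6$, since these are closed under locally uniform convergence.

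Next I would pass to the limit in the mild formulation. By construction each $f^\upidx{n_0}$ satisfies
\begin{align*}
f^\upidx{n_0}\at{t}=\calT\at{t}g^\upidx{n_0}+\int\limits_0^t\Gamma^\upidx{n_0}\at{s}\,\calT\at{t-s}\at{Jf^\upidx{n_0}\at{s}}\,\dint{s}.
\end{align*}
Remark \ref{InfSys.Rem.Estimates} gives a uniform bound $\Gamma^\upidx{n_0}\at{t}\leq{D}$ and a uniform lower bound $N^\upidx{n_0}\at{t}\geq{d}$; together with the uniform exponential tail control $M^{\bot,\,\upidx{n_0}}\pair{t}{\alpha}\leq{D}\exp\at{-d\alpha}$ this lets me control the tails in both $n$ and $a$, so the integrand converges and the contributions from classes $n>n_0$ vanish as $n_0\to\infty$. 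The tightness estimate is precisely what guarantees that no mass escapes to infinity, so the collision term $Jf^\upidx{n_0}$ and the weight $\Gamma_N,\Gamma_D$ all converge to their counterparts evaluated at $f$; passing to the limit then shows $f$ solves the mild equation for Problem \ref{TheInfiniteProblem} with $n_0=\infty$.

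It remains to verify the admissibility, regularity, and conservation properties of the limit. The polyhedral formula $P\at{f\at{t}}=0$ and the conservation of area follow by passing to the limit in the corresponding identities for $f^\upidx{n_0}$ (which hold by Lemma \ref{Existence.Local.Solution2}), using the uniform tail bounds to justify exchanging limit and summation/integration; the lower bound $N^\upidx{n_0}\at{t}\geq d$ survives in the limit and yields $A\at{f\at{t}}=A\at{g}>0$, so $f\at{t}$ is admissible. The non-increase of the number of grains passes to the limit from the monotonicity established for each $f^\upidx{n_0}$. Regularity in the sense of Definition \ref{InfSys.Rem.Regularity} follows because the equi-continuity estimate $\bnorm{\calS^+_{\delta}f^\upidx{n_0}\at{t}-f^\upidx{n_0}\at{t}}\leq\eps$ is uniform in $n_0$ and hence survives, while rapid decrease is inherited from the uniform bound on $M^{\bot,\,\upidx{n_0}}$.

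\textbf{Main obstacle.} The delicate point is the interchange of the limit $n_0\to\infty$ with the infinite sum over $n$ and the integral over $a$ appearing in $Jf$, $\Gamma_D\at{f}$, and the conserved quantities. Pointwise (diagonal) convergence of each component $f^\upidx{n_0}_n$ is not by itself enough; one needs a dominated-convergence or tail-smallness argument, and this is exactly where the uniform tightness estimate $M^{\bot,\,\upidx{n_0}}\pair{t}{\alpha}\leq{D}\exp\at{-d\alpha}$ from Remark \ref{InfSys.Rem.Estimates} does the essential work, ensuring uniform smallness of the tails and thereby legitimising every passage to the limit.
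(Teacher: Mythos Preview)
Your proposal is correct and follows essentially the same route as the paper: Arzel\`a--Ascoli applied componentwise via Lemma \ref{InfSys.Corr.Compactness}, a diagonal extraction to obtain a limit $f$, and then the uniform tightness estimates of Remark \ref{InfSys.Rem.Estimates} to justify passing to the limit in the mild formulation and in the moment identities for $N$, $P$, and $A$. Your write-up is in fact more explicit than the paper's own proof in several places (the mild formulation, the verification of equi-continuity for regularity), but the architecture and the key ingredients are identical.
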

\begin{proof}
Let $\Omega=\ccinterval{0}{T}\times\cointerval{0}{\infty}$. The
Arzela--Ascoli theorem implies that any equi-continuous subset of
$BC\at{\Omega}$ has a subsequence that converges to a bounded and
continuous limit function on $\Omega$, and that this convergence is
uniform on each compact subset of $\Omega$. Thanks to Lemma
\ref{InfSys.Corr.Compactness} there exists a sequence $\at{n_k}_k$
with $n_k\to\infty$ such that $f_2^\upidx{n_k}$ converges to a limit
$f_2$ locally uniform in $BC\at{\Omega}$. Moreover, passing to a
suitably chosen subsequence, still denoted by $\at{n_k}_k$, we can
assume that $f_3^\upidx{n_k}\to{f_3}$ for some
$f_3\in{BC}\at{\Omega}$. Iterating this argument, and using the
usual diagonal trick, we finally find a subsequence along with limit
functions $f=\quadruple{f_2}{...}{f_n}{...}$ such that
\begin{align*}
f_n^\upidx{n_k}\xrightarrow{k\to\infty}f_n\quad\text{locally uniform in $BC\at{\Omega}$}.
\end{align*}
for all $n\geq2$. By construction, the limit $f$ is non--negative
and satisfies $\bnorm{f\at{t}}\leq\bnorm{g}$ for all $t$. The
uniform tightness estimates from Remark \ref{InfSys.Rem.Estimates}
imply the $L^1$-regularity of $f$, and hence $f\in\fspInf$, as well
as $M^\bot\at{f\at{t}}\leq{D}\exp\at{-dt}$. Moreover, since these
tightness estimates control the tail behaviour with respect to $n$
and $a$ we infer that
\begin{align*}
N\bat{f^\upidx{n_k}\at{t}}\xrightarrow{k\to\infty}N\at{f\at{t}},\quad
\Gamma\bat{f^\upidx{n_k}\at{t}}\xrightarrow{k\to\infty}\Gamma\at{f\at{t}},\quad
\end{align*}
and
\begin{align*}
P\bat{f^\upidx{n_k}\at{t}}\xrightarrow{k\to\infty}P\at{f\at{t}},\quad
A\bat{f^\upidx{n_k}\at{t}}\xrightarrow{k\to\infty}N\at{f\at{t}}.
\end{align*}
From this we conclude that $f$ is in fact a mild solution to Problem
\ref{TheInfiniteProblem}, and the remaining assertions concerning
$A\at{f\at{t}}$, $N\at{f\at{t}}$ and $P\at{f\at{t}}$ follow from the
corresponding properties of $f^\upidx{n_k}$.
\end{proof}
%
%
%
%
%
\subsubsection*{Remarks on well-posedness}
%
%
To establish well-posedness for our kinetic model we must prove
both, the uniqueness of solutions and the continuous dependence on
the initial data. This can done by means of energy methods, if the
`energy distance' of two states $f$ and $\tilde{f}$ is defined via
\begin{align*}
\calE\npair{f}{\tilde{f}} = \sum_{n\ge2}
n\int_{0}^{\infty}\exp\at{-a}\bat{f_n\at{a}-\tilde{f}_n\at{a}}^2\,\dint{a}
+ \bat{N\at{f}-N\nat{\tilde{f}}}^2.
\end{align*}
The main result on $\calE$, which in turn implies uniqueness and
continuous dependence for solutions to Problem
\ref{TheInfiniteProblem}, can be stated as follows.
\begin{lemma} For all $0<T<\infty$ and any
two mild solutions $f$ and $\tilde{f}$ from Lemma
\ref{InfSys.Lemma.Existence} we have
\begin{equation}
\label{Existence:energy of difference}
\calE\npair{f\at{t}}{\tilde{f}\at{t}}\leq\exp\at{Ct}E\npair{g}{\tilde{g}}
\end{equation}
for all $0\leq{t}\leq{T}$, where the constant $C$ depends only on
$T$ and the initial data $g=f\at{0}$, $\tilde{g}=\tilde{f}\at{0}$.
\end{lemma}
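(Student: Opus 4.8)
The plan is to run a Gronwall argument for $t\mapsto\calE\npair{f\at{t}}{\tilde{f}\at{t}}$. Set $w=f-\tilde{f}$. Since $J$ is linear and $\Gamma\at{f}\at{Jf}_n-\Gamma\at{\tilde{f}}\at{J\tilde{f}}_n=\Gamma\at{f}\at{Jw}_n+\bat{\Gamma\at{f}-\Gamma\at{\tilde{f}}}\at{J\tilde{f}}_n$, the difference $w$ is a mild solution of the transport system $\partial_tw_n+\at{n-6}\partial_aw_n=\Gamma\at{f}\at{Jw}_n+\bat{\Gamma\at{f}-\Gamma\at{\tilde{f}}}\at{J\tilde{f}}_n$ with initial datum $g-\tilde{g}$ and homogeneous boundary conditions $w_n\pair{0}{t}=0$ for $n>6$. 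As in the proof of Lemma \ref{LinProb1.Moments} I would first establish the differential identities below for regular (classical) approximations and then transfer them to mild solutions via the continuity estimate \eqref{LinProb1.ExMildSol.Eqn1}, so that $t\mapsto\calE$ is differentiable. Throughout I would use the uniform a priori bounds for both solutions: $\bnorm{f\at{t}}\leq\bnorm{g}$ (Lemma \ref{InfSys.Lemma.Existence}), the lower bound $N\at{f\at{t}}\geq{}d$ and the upper bound $\Gamma\at{f\at{t}}\leq{}D$ (Remark \ref{InfSys.Rem.Estimates}), together with the uniform exponential tightness of $f$ and $\tilde{f}$.

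Writing $\calE_1\at{t}=\sum_{n\geq2}n\int_0^\infty\exp\at{-a}w_n^2\,\dint{a}$ for the first part of $\calE$, I would differentiate it in time. Multiplying the equation by $2nw_n\exp\at{-a}$ and integrating by parts in $a$, the transport term produces the boundary contribution $\sum_{n=2}^5n\at{n-6}w_n\pair{0}{t}^2$, which is nonpositive since $n-6<0$ for $2\leq{n}\leq5$ and the remaining traces vanish by the boundary condition, plus a bulk term $-\sum_n n\at{n-6}\int\exp\at{-a}w_n^2$ whose only positive contributions come from $n\leq5$ and are hence bounded by $C\calE_1$. The \emph{negative trace term} $-\sum_{n=2}^5n\at{6-n}w_n\pair{0}{t}^2$ is the crucial dissipation used below. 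For the gain part $2\Gamma\at{f}\sum_nn\int\exp\at{-a}w_n\at{Jw}_n$ the point is that $J$ is dissipative in the weighted $n$-norm: from \eqref{intro.DefJ} one collects
\begin{align*}
\sum_{n\geq2}n\,w_n\at{Jw}_n=\at{2\beta+1}\Bat{-\sum_{n\geq2}n^2w_n^2+\sum_{n\geq2}n\at{n+1}w_nw_{n+1}}+4\at{\beta+1}w_2^2,
\end{align*}
and Young's inequality $n\at{n+1}w_nw_{n+1}\leq\tfrac12n\at{n+1}\at{w_n^2+w_{n+1}^2}$ shows the bracket is at most $-w_2^2$, so $\sum_nn\,w_n\at{Jw}_n\leq\at{2\beta+3}w_2^2$ pointwise in $a$. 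Integrating against $\exp\at{-a}$ and using $\Gamma\at{f}\leq{}D$, this contribution is bounded by $C\calE_1$.

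The delicate term is the one carrying the weight difference, $2\bat{\Gamma\at{f}-\Gamma\at{\tilde{f}}}\sum_nn\int\exp\at{-a}w_n\at{J\tilde{f}}_n$. By Cauchy--Schwarz its $w$-factor is at most $C\calE_1^{1/2}$, the constant being finite because $J\tilde{f}$ inherits the exponential $n$-decay of $\tilde{f}$. For the weight difference I would exploit the polyhedral formula $P\at{f}=P\at{\tilde{f}}=0$, which (as in Remark \ref{AppSys.J.Props}, now with $n_0=\infty$) yields $\Gamma_D\at{f}=6N\at{f}-2\at{\beta+1}\int_0^\infty f_2\,\dint{a}$, whence
\begin{align*}
\abs{\Gamma\at{f}-\Gamma\at{\tilde{f}}}\leq{}C\Bat{\sum_{n=2}^5\abs{w_n\pair{0}{t}}+\bigabs{N\at{f}-N\at{\tilde{f}}}+\Bigabs{\int_0^\infty w_2\,\dint{a}}},
\end{align*}
using that $\Gamma_D$ is bounded below by $c\,N\geq{}cd>0$ and $\Gamma_N\at{\tilde{f}}$ is bounded. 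Multiplied by $\calE_1^{1/2}$, the trace terms are split by Young's inequality into a small multiple of $\sum_{n=2}^5w_n\pair{0}{t}^2$, absorbed by the trace dissipation, and a multiple of $\calE_1$; and $\bigabs{N\at{f}-N\at{\tilde{f}}}\leq\calE^{1/2}$ since its square is exactly the second part of $\calE$. Finally, differentiating that second part and using $\tfrac{\dint}{\dint{t}}N\at{f}=\sum_{n=2}^5\at{n-6}f_n\pair{0}{t}$ gives $2\bat{N\at{f}-N\at{\tilde{f}}}\sum_{n=2}^5\at{n-6}w_n\pair{0}{t}$, again split into $C\bat{N\at{f}-N\at{\tilde{f}}}^2\leq{}C\calE$ and a small multiple of $\sum_{n=2}^5w_n\pair{0}{t}^2$ killed by the trace dissipation. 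Choosing the Young parameters small enough that all the $w_n\pair{0}{t}^2$ are dominated by $\sum_{n=2}^5n\at{6-n}w_n\pair{0}{t}^2$ then gives $\tfrac{\dint}{\dint{t}}\calE\leq{}C\calE$ with $C=C\at{T,g,\tilde{g}}$, and \eqref{Existence:energy of difference} follows by Gronwall; uniqueness is the case $g=\tilde{g}$.

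The main obstacle is the last remaining term $\int_0^\infty w_2\,\dint{a}$: the weight $\exp\at{-a}$ in $\calE$ does \emph{not} control an $L^1$-norm, so this term cannot be bounded by $\calE^{1/2}$ by Cauchy--Schwarz alone. Here the regularity and tightness of the solutions are indispensable: splitting the $a$-integral at some $A$, the part on $\cinterval{0}{A}$ is controlled by $\exp\at{A/2}\calE_1^{1/2}$ while the tail on $\cointerval{A}{\infty}$ is controlled by the uniform exponential tightness of $f$ and $\tilde{f}$, and balancing $A$ furnishes the needed bound at the price of a constant depending on $T$ and the initial data. Carrying out this last estimate in a form strong enough to close the \emph{linear} Gronwall inequality (rather than a sublinear one, which would not force uniqueness) is, I expect, the technical heart of the argument.
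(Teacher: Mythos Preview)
Your overall strategy---differentiate $\calE$, exploit the negative boundary trace from the transport term, use the dissipativity identity for $\sum_n n\,w_n\at{Jw}_n$, and close by Gronwall---is exactly what the paper announces; indeed the paper itself does not give the proof but only states that it ``relies on careful estimates for $\tfrac{\dint}{\dint{t}}\calE$'' and defers all details to \cite{Hen07}. Your algebraic computations (the identity for $\sum_n n\,w_n\at{Jw}_n$ and the Young bound yielding $\at{2\beta+3}w_2^2$) are correct, and your treatment of the trace dissipation and of the $\bat{N\at{f}-N\at{\tilde{f}}}^2$ part of the energy is sound.

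The difficulty you isolate is genuine, and your proposed fix does \emph{not} close it. Splitting $\int_0^\infty w_2\,\dint{a}$ at a level $A$ and using tightness gives
\[
\Bigabs{\int_0^\infty w_2\,\dint{a}}\;\leq\;e^{A/2}\,\calE_1^{1/2}\;+\;C\,e^{-dA},
\]
but the tail bound $C\,e^{-dA}$ comes from the tightness of $f$ and $\tilde{f}$ \emph{separately} and is therefore a constant independent of $w$. Feeding this into the estimate yields, for any fixed $A$, an inequality of the shape $\dot{\calE}\leq{a}\,\calE+b\,\calE^{1/2}$ with $b>0$; the associated ODE has nontrivial solutions starting from $0$ (set $y=\calE^{1/2}$ to get the linear ODE $\dot{y}=\tfrac{a}{2}y+\tfrac{b}{2}$), so uniqueness cannot be concluded. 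Optimising $A$ against $\calE_1$ only produces a sublinear power $\calE^{\theta}$ with $\theta<1$, which has the same defect. In short, the weight $e^{-a}$ in $\calE$ is too light to control the $L^1$-quantity $\int_0^\infty w_2\,\dint{a}$ appearing in $\Gamma_D\at{f}-\Gamma_D\at{\tilde{f}}$, and tightness alone cannot compensate.

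What is missing is a mechanism that makes $\int_0^\infty w_2\,\dint{a}$ itself small with $\calE$. One natural route---presumably what the lengthy computation in \cite{Hen07} carries out---is to enlarge the energy by adjoining further integral quantities (for instance suitable weighted versions of $W_n:=\int_0^\infty w_n\,\dint{a}$) and to derive a closed differential system for the augmented energy; note that the polyhedral identity $P\at{f}=P\at{\tilde{f}}=0$ gives the constraint $\sum_{n\geq2}\at{n-6}W_n=0$, which ties the $W_n$ to $N\at{f}-N\at{\tilde{f}}$ and may be used in such a closure. Without an argument of this kind your proof remains incomplete precisely at the point you flagged.
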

The proof of \eqref{Existence:energy of difference} relies on
careful estimates for
$\frac{\dint{}}{\dint{t}}\calE\npair{f\at{t}}{\tilde{f}\at{t}}$.
Since a concise presentation would involve lengthy computations, we
omit the details here and refer the reader to \cite{Hen07}.
%

\section*{Acknowledgments}
%
RH and BN acknowledge support by the Deutsche
Forschungsgemeinschaft through the Priority Program 1095 {\it
Analysis, Modeling, and Simulation of Multiscale Problems} within
the project {\it Homogenization of many--particle systems} at the
Humboldt--Universit{\"a}t zu Berlin. 
JJLV was supported through the Alexander-von-Humboldt foundation and DGES
Grant MTM2007-61755.

%
%
\providecommand{\bysame}{\leavevmode\hbox to3em{\hrulefill}\thinspace}
\providecommand{\MR}{\relax\ifhmode\unskip\space\fi MR }
\providecommand{\MRhref}[2]{%
  \href{http://www.ams.org/mathscinet-getitem?mr=#1}{#2}
}
\providecommand{\href}[2]{#2}

\end{document}